\def\@maketitle{%
  \newpage
  \null
  \vskip 1em%
  \begin{center}%
  \let \footnote \thanks
    {\Large \@title \par}%
    \vskip 1.5em%
    {\large
      \lineskip .5em%
      \begin{tabular}[t]{c}%
        \@author
      \end{tabular}}%
  \end{center}%
  \par
  \vskip 1.5em}
\def\section{%
           \@startsection {section}{1}{\z@}%
           {3.5ex \@plus 1ex \@minus .2ex}%
           {1.5ex \@plus .2ex}%
           {\large\bfseries\boldmath}}
\def\subsection{%
           \@startsection{subsection}{2}{\z@}%
           {2.5ex \@plus 1ex \@minus .2ex}%
           {1ex \@plus .2ex}%
           {\normalfont\normalsize\itshape}}
\def\subsubsection{%
           \@startsection{subsubsection}{3}{\z@}%
           {2.5ex \@plus 1ex \@minus .2ex}%
           {1ex \@plus .2ex}%
           {\normalfont\normalsize\itshape}}
\def\paragraph{%
           \@startsection{paragraph}{4}{\z@}%
           {2ex \@plus 1ex \@minus .2ex}%
           {-1em}%
           {\normalfont\normalsize\itshape}}
\def\subparagraph{%
           \@startsection{subparagraph}{5}{\parindent}%
           {2ex \@plus 1ex \@minus .2ex}%
           {-1em}%
           {\normalfont\normalsize\itshape}}
\def\@seccntDot{.}
\def\@seccntformat#1{\csname the#1\endcsname\@seccntDot\hskip 0.5em}
\newenvironment{enm}%
{\begin{enumerate}
\setlength{\itemsep}{0em}
}%
{\end{enumerate}}
\newenvironment{itm}%
{\begin{itemize}
\setlength{\itemsep}{0em}
}%
{\end{itemize}}
{\begin{description}
\setlength{\itemsep}{0em}
}%
{\end{description}}
\numberwithin{equation}{section}
\newtheorem{theorem}{Theorem}[section]
\newtheorem{corollary}[theorem]{Corollary}
\newtheorem{proposition}[theorem]{Proposition}
\newtheorem{lemma}[theorem]{Lemma}
\theoremstyle{definition}    
\newtheorem{definition}[theorem]{Definition}
\newtheorem{example}[theorem]{Example}
\newtheorem{remark}[theorem]{Remark}
\newtheorem*{acknowledgements}{Acknowledgements}
\theoremstyle{remark}
\newcommand{\C}{\mathbb{C}} % complex numbers
\newcommand{\CP}{\mathbb{P}} % projective space
\newcommand{\Z}{\mathbb{Z}} % integers
\newcommand{\GL}{\operatorname{GL}} % general linear group
\newcommand{\gl}{\operatorname{\mathfrak{gl}}}
\newcommand{\g}{\mathfrak{g}} % Lie algebra
\newcommand{\ad}{\operatorname{ad}}
\newcommand{\End}{\operatorname{End}}
\newcommand{\Aut}{\operatorname{Aut}}
\newcommand{\Hom}{\operatorname{Hom}}
\newcommand{\Ker}{\operatorname{Ker}}
\newcommand{\range}{\operatorname{Im}}
\newcommand{\Coker}{\operatorname{Coker}}
\newcommand{\rank}{\operatorname{rank}}
\newcommand{\tr}{\operatorname{tr}}
\newcommand{\unit}{\mathrm{Id}}
\newcommand{\vin}{\operatorname{in}} % the incoming vertex
\newcommand{\vout}{\operatorname{out}} % the outgoing vertex
\newcommand{\Res}{\operatorname*{Res}}
\newcommand{\lset}[2]%
{\left\{ \, \left. #1 \hspace{0.25em} \right| \, #2 \, \right\} }
\newcommand{\rset}[2]%
{\left\{ \, #1 \, \left| \hspace{0.25em} #2\right. \, \right\} } 
\newcommand{\ov}{\overline}
\newcommand{\bM}{\operatorname{\mathbf{M}}} 
\newcommand{\Mreg}{\operatorname{\mathfrak{M}}^\mathrm{irr}}
\newcommand{\mc}{\operatorname{\it mc}}
\newcommand{\EE}{\operatorname{\mathcal{E}}}
\newcommand{\vek}{{\vec{k}}}
\newcommand{\vel}{{\vec{l}}}
\newcommand{\Rep}{\operatorname{Rep}}
\newcommand{\bW}{\mathbf{W}}
\newcommand{\QQ}{\mathcal{Q}}
\newcommand{\OO}{\mathcal{O}}
\newcommand{\bO}{\mathbb{O}}
\newcommand{\st}{\text{\rm -st}}
\newcommand{\HD}{\operatorname{HD}}
\newcommand{\lDk}{{}_\vel\mathcal{D}_\vek}
\newcommand{\DDk}{\mathcal{D}_\vek}
\newcommand{\lHk}{{}_\vel\mathcal{H}_\vek}
\newcommand{\kDl}{{}_\vek\mathcal{D}_\vel}
\newcommand{\kHl}{{}_\vek\mathcal{H}_\vel}
\newcommand{\add}{\operatorname{\it add}}
\newcommand{\ord}{\operatorname{ord}}
\newcommand{\frh}{\mathfrak{h}}
\newcommand{\FF}{\operatorname{\mathcal{F}}}
\newcommand{\VV}{\mathcal{V}}
\newcommand{\WW}{\mathcal{W}}
\DeclareMathAlphabet{\mathFrak}{U}{euf}{m}{n}
\let\mathfrak\mathFrak
\begin{document}

\title{Middle Convolution and Harnad Duality}
\author{Daisuke Yamakawa
\thanks{This research was supported by the grant 
ANR-08-BLAN-0317-01 
of the Agence nationale de la recherche.}
\thanks{current address: D\'epartement de math\'ematiques et applications, \'Ecole normale sup\'erieure, 45 rue d'Ulm, 75005 Paris, France;
\texttt{yamakawa@dma.ens.fr}} \\
\small\it
CMLS, \'Ecole polytechnique - CNRS UMR 7640 - \\
\small\it
ANR S\'EDIGA
}

\maketitle

\begin{abstract}
We interpret the additive middle convolution operation 
in terms of the Harnad duality, 
and as an application, generalize the operation 
to have a multi-parameter and act on irregular singular systems.
% Dettweiler-Reiter's additive description of 
% Katz's middle convolution 
% can be interpreted in terms of the Harnad duality. 
% Using this interpretation together with 
% Mumford's geometric invariant theory, 
% we generalize the additive middle convolution 
% to have a multi-parameter and 
% act on systems of linear ordinary differential equations 
% with irregular singularities. 
% We show that the generalized operation holds 
% basic properties of the original one, 
% and additionally, show that 
% Katz's algorithm using our generalization 
% works well in generic case.
% \keywords{Middle convolution \and Harnad duality}
% \subclass{53D30 \and 34M15 \and 14L24}
\end{abstract}

\section{Introduction}\label{sec:intro}

\noindent
The middle convolution 
introduced by Katz~\cite{Katz} and 
reformulated by V\"olklein~\cite{Volk}, 
Dettweiler-Reiter~\cite{DR-mc} is an operation acting on 
\begin{itm}
\item the category of local systems on a punctured projective line 
(in the multiplicative case); or
\item that of Fuchsian systems (in the additive case).
\end{itm}
The two multiplicative and additive operations match up via 
the Riemann-Hilbert correspondence~\cite{DR-rh}. 
Katz effectively used the middle convolution to study 
irreducible local systems which are {\em rigid}, namely,
have no deformation preserving the local monodromy data,
and proved that 
any such a local system 
is obtained by applying a finite iteration of 
tensor multiplications by rank 1 local systems 
and middle convolutions,
to some rank 1 local system~\cite{Katz}.
One can find many other applications of the middle convolution; 
in particular, 
to the Deligne-Simpson problem~\cite{Cra-par,CS,Kostov}, 
to the classification/connection problems~\cite{Oshima-conn}, 
and to the theory of isomonodromic deformations
~\cite{Boa-klein,DR-p,Filipuk,HF}.

In this article we focus attention on 
the additive middle convolution.
First recall its definition 
following Dettweiler-Reiter~\cite[Appendix]{DR-mc}.
Fix a finite set $D$ of points in $\C$ and 
suppose that a pair $(V,A)$ of 
a finite-dimensional $\C$-vector space $V$ 
and a Fuchsian system 
\begin{equation}\label{eq:fuchs}
\frac{du}{dz}=A(z)\,u, \quad 
A(z)=\sum_{t \in D} \frac{A_t}{z-t}, \quad A_t \in \End(V)
\end{equation}
with singularities on $D \cup \{ \infty \}$ is given.
Here we do not distinguish a system \eqref{eq:fuchs} and 
its coefficient matrix $A(z)$ fixing the coordinate $z$.
The definition of the middle convolution $\mc_\lambda(V,A)$ 
is divided into the following two steps.

\bigskip
\noindent
\textbf{(MC 1)}\;
Set $W_t := V/\Ker A_t,\, t \in D$, and let 
\begin{itm}
\item $Q_t \colon W_t \to V$ be the injection induced from $A_t$; and
\item $P_t \colon V \to W_t$ be the projection.
\end{itm}
Obviously we have $A_t =Q_t P_t$.
Set $W:=\bigoplus_{t \in D} W_t$ 
and let $Q \colon W \to V$ (resp.\ $P \colon V \to W$) 
be the linear map whose block components with respect to 
the decomposition $W=\bigoplus W_t$ are $Q_t$ (resp. $P_t$).

\medskip
\noindent
\textbf{(MC 2)}\;
For $\lambda \in \C$, 
set $V^\lambda:= W/\Ker (PQ +\lambda\,\unit_W)$, and let 
\begin{itm}
\item $Q^\lambda \colon W \to V^\lambda$ be 
the projection; and
\item $P^\lambda \colon V^\lambda \to W$ 
be the injection induced from $PQ +\lambda\,\unit_W$.
\end{itm}
Obviously we have $P^\lambda Q^\lambda = PQ +\lambda\,\unit_W$.
Let $Q^\lambda_t \colon W_t \to V^\lambda$ 
(resp.\ $P^\lambda_t \colon V^\lambda \to W_t$) 
be the block components of $Q^\lambda$ (resp.\ $P^\lambda$).
\medskip

\begin{definition}[Dettweiler-Reiter]
We call 
\[
\mc_\lambda (V,A) := (V^\lambda,A^\lambda), \quad
A^\lambda(z)= \sum_{t\in D} \frac{Q^\lambda_t P^\lambda_t}{z-t}
\]
the {\em (additive) middle convolution} of $(V,A)$ with $\lambda$.
\end{definition}

Here looking at the above procedure, 
one can observe that 
the given pair $(V,A)$ and its middle convolution are 
described as
\[
A(z)= Q(z\,\unit_W -T)^{-1}P, \quad 
A^\lambda(z)= Q^\lambda (z\,\unit_W -T)^{-1} P^\lambda,
\]
where $T := \bigoplus_t t\,\unit_{W_t} \in \End(W)$. 
Such an expression of a system can be found in the papers of 
Adams, Harnad, Hurtubise and Previato
~\cite{AHH-dual,AHH-iso2,AHP-iso1,Harnad} 
and implicitly in that of Jimbo-Miwa-M\^ori-Sato~\cite{JMMS}. 
In particular,
Harnad~\cite{Harnad} 
considered two systems having the following symmetric description:
\[
A(z) = S + Q(z\,\unit_W -T)^{-1}P, \quad 
B(\zeta)=T + P(\zeta\,\unit_V -S)^{-1}Q,
\]
where $Q \in \Hom(W,V),\, P \in \Hom(V,W)$, and 
$S,T$ are semisimple endomorphisms of $V,W$ respectively.
He then obtained an equivalence 
(called the {\em Harnad duality}) between 
the isomonodromic deformations of the systems $A(z)$ and $-B(\zeta)$.
Note that if $S=0$, we have $B(\zeta)=T+PQ\zeta^{-1}$.
So the procedure getting the middle convolution 
can be rephrased roughly as follows: for given Fuchsian system 
$A(z)=Q(z\,\unit_W -T)^{-1}P$,
\begin{enm}
\item[(a)] take its `Harnad dual'
\footnote{For convenience, we use the terminology `Harnad dual' 
on the system $B(\zeta)$, not on $-B(\zeta)$,
while the original `Harnad duality' is 
the correspondence between $A(z)$ and $-B(\zeta)$.} 
$B(\zeta)=T+PQ\zeta^{-1}$;
\item[(b)] shift $B(\zeta)$ by $\lambda \zeta^{-1}$; and then
\item[(c)] take the Harnad dual again.
\end{enm}
Such a relation between the middle convolution 
and the Harnad duality is already known by Boalch~\cite{Boa-klein,Boa-diff}.
It may be viewed as 
another formulation of Katz's interpretation of 
the middle convolution via Fourier transform~\cite[\S 2.10]{Katz}. 
Note that the above procedure makes sense 
even in the case that $S$ is an arbitrary semisimple endomorphism. 
Suppose that a system $A(z)=S+\sum A_t/(z-t)$ with 
simple poles on $D$ and a pole of order 2 at $\infty$ is given. 
Then at step (a), take its Harnad dual 
$B(\zeta)=T+P(\zeta\,\unit_V-S)^{-1}Q$.
Next at step (b), shift it by 
some rank 1 Fuchsian system $\alpha(\zeta)$ having 
singularities at the eigenvalues of $S$.
Finally at step (c), take the Harnad dual again.
Then we get the middle convolution $\mc_\alpha(A)$ 
with $\alpha$. 
Boalch generalized the Harnad duality, 
called the {\em cycling}, and 
obtained a further generalization of the middle convolution 
(see \cite[\S 4.6]{Boa-quiver}) for systems 
with simple poles at $D$ and a pole of order 3 at $\infty$  
which has a `normal form' (see Definition~\ref{dfn:HTL}).

If $T$ is not semisimple, then 
the matrix-valued function $Q(z\,\unit_W -T)^{-1}P$ 
has in general higher order poles 
at the eigenvalues of $T$. 
In fact, it is known~\cite{Kawakami,Wood} that 
for any system of the form
\begin{equation}\label{eq:irreg1}
A(z)=\sum_{t \in D}\sum_{k=1}^{k_t} \frac{A_{t,k}}{(z-t)^k},
\quad A_{t,k} \in \End(V), \quad k_t \in \Z_{>0},
\end{equation}
there exist a finite-dimensional $\C$-vector space $W$, 
an endomorphism $T$ of $W$ and 
homomorphisms $Q \colon W \to V$ and $P \colon V \to W$, such that
$A(z)=Q(z\,\unit_W-T)^{-1}P$. 
One may then expect that the middle convolution operation $\mc_\alpha$ 
can be generalized to that acting on systems of the form \eqref{eq:irreg1}.
In order to obtain such a generalization, we have to make 
a rigorous meaning of the `Harnad dual', because 
for given system $A(z)$, the choice of datum $(W,T,Q,P)$ satisfying 
$A(z)=Q(z\,\unit_W -T)^{-1}P$ is not unique, 
so we have to eliminate ambiguity of the choice 
in a certain canonical way.
In Fuchsian case, what we do in (MC~1) gives the answer to it,
so the problem is easily solved.
In this article, 
as a generalization of the procedure (MC~1), 
we give an explicit construction of $(W,T,Q,P)$ 
for any system $A(z)$ of the form \eqref{eq:irreg1}, 
and show that it is `canonical' in the following sense:
the constructed datum $(W,T,Q,P)$ together with $V$,
which we call the {\em canonical datum for $A(z)$} 
(Definition~\ref{dfn:canonical}), 
satisfies a stability condition (Definition~\ref{dfn:stable}) 
in the sense of Mumford's geometric invariant theory, 
and is characterized up to isomorphism via this condition.
More precisely, we show the following:

\begin{theorem}[Proposition~\ref{prop:unique} and Proposition~\ref{prop:explicit}] 
For any system $A(z)$ of the form as in \eqref{eq:irreg1}
with $V \neq 0$, the canonical datum is stable;
in particular, 
there exists a stable datum $(V,W,T,Q,P)$ satisfying
$Q(z\,\unit_W-T)^{-1}P=A(z)$.

If two data $(V,W,T,Q,P)$ and $(V,W',T',Q',P')$ with the same $V \neq 0$ 
are both stable and satisfy
\[
Q(z\,\unit_W-T)^{-1}P=Q'(z\,\unit_{W'}-T')^{-1}P',
\]
then there exists an isomorphism $f \colon W \to W'$ such that
\[
Q' = Q f^{-1}, \quad P' = f P, \quad T' = f T f^{-1}.
\]
\end{theorem}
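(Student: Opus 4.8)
The two assertions are an existence statement and a rigidity statement for realizations $A(z)=Q(z\,\unit_{W}-T)^{-1}P$. My plan rests on reading the stability of Definition~\ref{dfn:stable} as the classical \emph{minimality} of such a realization, namely
\[
W=\sum_{j\ge0}T^{j}(\range P)\qquad\text{and}\qquad\bigcap_{j\ge0}\Ker(QT^{j})=0
\]
(``reachability'' and ``observability''), and I shall use this equivalence freely. Granting it, the first assertion reduces to exhibiting one minimal realization of $A(z)$ in which $T=\bigoplus_{t\in D}T_{t}$, each $T_{t}$ acting on a space $W_{t}$ with sole eigenvalue $t$; the second assertion is then the Ho--Kalman state-space isomorphism theorem, which I would reprove in this intrinsic (basis-free) language. (When $A\equiv0$ one has $W=0$, vacuously stable; so assume some $A_{t,k}\neq0$ below.)

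\textbf{Existence and the canonical datum.}
For each $t\in D$ set $\widetilde W_{t}:=V^{\oplus k_{t}}$, let $S_{t}\in\End(\widetilde W_{t})$ be the nilpotent shift $(v_{1},\dots,v_{k_{t}})\mapsto(v_{2},\dots,v_{k_{t}},0)$, let $\widetilde Q_{t}\colon\widetilde W_{t}\to V$, $(v_{1},\dots,v_{k_{t}})\mapsto v_{1}$, and let $\widetilde P_{t}\colon V\to\widetilde W_{t}$, $v\mapsto(A_{t,1}v,\dots,A_{t,k_{t}}v)$. One computes $\widetilde Q_{t}S_{t}^{k-1}\widetilde P_{t}=A_{t,k}$ for $1\le k\le k_{t}$ and $\widetilde Q_{t}S_{t}^{j}\widetilde P_{t}=0$ for $j\ge k_{t}$, so $\widetilde Q_{t}\,((z-t)\,\unit-S_{t})^{-1}\widetilde P_{t}=\sum_{k=1}^{k_{t}}A_{t,k}(z-t)^{-k}$. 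Let $W_{t}:=\sum_{j\ge0}S_{t}^{j}(\range\widetilde P_{t})\subseteq\widetilde W_{t}$ (equivalently, the image of the block Hankel operator associated with the principal part of $A$ at $t$), an $S_{t}$-invariant subspace; put $N_{t}:=S_{t}|_{W_{t}}$, $T_{t}:=t\,\unit_{W_{t}}+N_{t}$, take $P_{t}:=\widetilde P_{t}$ (whose image lies in $W_{t}$) and $Q_{t}:=\widetilde Q_{t}|_{W_{t}}$. With $W:=\bigoplus_{t}W_{t}$, $T:=\bigoplus_{t}T_{t}$ and $Q,P$ having block components $Q_{t},P_{t}$, one gets $Q(z\,\unit_{W}-T)^{-1}P=A(z)$. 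For minimality: the $W_{t}$ are exactly the generalized eigenspaces of $T$, so every $T$-invariant subspace splits along $\bigoplus_{t}W_{t}$; since $\{T_{t}^{j}\}_{j}$ and $\{N_{t}^{i}\}_{i}$ span the same subspace of $\End(W_{t})$, reachability reduces to $\sum_{i}S_{t}^{i}(\range\widetilde P_{t})=W_{t}$ (true by construction) and observability to $\bigcap_{i}\Ker(\widetilde Q_{t}S_{t}^{i}|_{W_{t}})=0$ (true because $\widetilde Q_{t}S_{t}^{i}$ is the $(i{+}1)$-st coordinate on $\widetilde W_{t}$). Hence this datum is stable; being built intrinsically from $A$, it coincides with the canonical datum of Definition~\ref{dfn:canonical} up to bookkeeping. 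The non-triviality of $V$ is used only to ensure that stable data exist at all.

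\textbf{Uniqueness.}
Let $(V,W,T,Q,P)$ and $(V,W',T',Q',P')$ be stable, hence minimal, with $Q(z\,\unit_{W}-T)^{-1}P=Q'(z\,\unit_{W'}-T')^{-1}P'=:A(z)$; expanding at $z=\infty$ gives $A(z)=\sum_{n\ge0}c_{n}z^{-n-1}$, where $c_{n}:=QT^{n}P=Q'(T')^{n}P'\in\End(V)$ depends only on $A$. Introduce the reachability map $\mathcal R\colon\bigoplus_{n\ge0}V\to W$, $(v_{n})\mapsto\sum_{n}T^{n}Pv_{n}$, surjective by reachability, and the observability map $\mathcal O\colon W\to\prod_{n\ge0}V$, $w\mapsto(QT^{n}w)_{n}$, injective by observability, with primed analogues; then $\mathcal O\mathcal R=(c_{m+n})_{m,n}=\mathcal O'\mathcal R'$. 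If $\mathcal Rx=0$ then $\mathcal O'(\mathcal R'x)=\mathcal O\mathcal R x=0$, so $\mathcal R'x=0$ by injectivity of $\mathcal O'$; hence $\mathcal R'$ vanishes on $\Ker\mathcal R$ and descends to a unique $f\colon W\to W'$ with $f\mathcal R=\mathcal R'$, and symmetrically an $f'$ with $f'\mathcal R'=\mathcal R$. Since $\mathcal R,\mathcal R'$ are onto, $f'f=\unit_{W}$ and $ff'=\unit_{W'}$, so $f$ is an isomorphism. Finally, writing $\sigma$ for the right shift $(v_{0},v_{1},\dots)\mapsto(0,v_{0},v_{1},\dots)$ and $\iota_{0}$ for the inclusion of the $n{=}0$ summand, one has $\mathcal R\sigma=T\mathcal R$, $\mathcal R\iota_{0}=P$ and $Q\mathcal R\colon(v_{n})\mapsto\sum_{n}c_{n}v_{n}$, plus the primed analogues; precomposing $f\mathcal R=\mathcal R'$ with these and using that $\mathcal R$ is onto yields $fT=T'f$, $fP=P'$ and $Q'f=Q$, i.e.\ $T'=fTf^{-1}$, $P'=fP$, $Q'=Qf^{-1}$.

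\textbf{Main obstacle.}
The non-formal point is the equivalence invoked at the start: that the GIT stability of Definition~\ref{dfn:stable} for the $\GL(W)$-action really is minimality --- equivalently, that a destabilizing one-parameter subgroup amounts to a proper $T$-invariant subspace containing $\range P$ or to a nonzero $T$-invariant subspace contained in $\Ker Q$ --- together with the verification that the explicit Hankel datum meets it, including the pole bookkeeping (that $T=\bigoplus_{t}T_{t}$ has spectrum exactly $D$ with Jordan blocks of size $\le k_{t}$ at $t$). Given that, the uniqueness half is essentially automatic and insensitive to the pole structure, since $\mathcal R$ and $\mathcal O$ are built from powers of $T$ alone and isomorphisms of data preserve generalized eigenspaces.
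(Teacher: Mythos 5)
Your argument is correct in substance, but it takes a genuinely different route from the paper, so let me compare. First, the point you flag as the ``main obstacle'' is actually immediate: Definition~\ref{dfn:stable} is phrased via subrepresentations, not one-parameter subgroups, and a pair $(0,Y)$ (resp.\ $(V,Y)$) is a subrepresentation of $(V,W,T,Q,P)$ precisely when $Y$ is a $T$-invariant subspace contained in $\Ker Q$ (resp.\ containing $\range P$); since the largest such subspace is $\bigcap_{j}\Ker(QT^{j})$ and the smallest is $\sum_{j}T^{j}(\range P)$, stability \emph{is} reachability plus observability, with no GIT input needed. Granting that, your uniqueness proof is the classical Ho--Kalman state-space isomorphism theorem, run through the reachability and observability operators and the Hankel factorization $\mathcal O\mathcal R=(c_{m+n})$; the paper's Proposition~\ref{prop:unique} instead doubles the spaces $\widehat W_t=W_t\oplus W'_t$, invokes the Le~Bruyn--Procesi/Lusztig description of $\GL(\widehat{\bW})$-invariants of quiver representations to see the two orbit closures meet, and then uses Lemma~\ref{lem:polystable} (closed orbits correspond to semisimple framed representations) to force the orbits to coincide. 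Your route is more elementary and self-contained and makes no use of the pole structure of $T$; the paper's buys the GIT framework (framing as in Remark~\ref{rem:framing}, closed orbits, Lemma~\ref{lem:stability}) which it reuses later, e.g.\ in Proposition~\ref{prop:geom} and Theorem~\ref{thm:geom}. Likewise, for existence the paper verifies the two conditions of Lemma~\ref{lem:stability} for its explicit quotient construction, whereas you build the minimal realization directly as the reachable subspace of a companion-type model and check reachability and observability; both are fine, and your per-point reduction via the splitting of $T$-invariant subspaces along generalized eigenspaces is legitimate (the spectral projections are polynomials in $T$).

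The one loose end is the clause ``the canonical datum is stable'' (Proposition~\ref{prop:explicit}): you only assert that your realization ``coincides with the canonical datum of Definition~\ref{dfn:canonical} up to bookkeeping,'' and uniqueness of \emph{stable} data cannot be applied to the canonical datum before its stability is known. The identification is true but needs a line of proof: the block-Toeplitz matrix $\widehat A_t$ of \eqref{eq:matrix}, viewed as a map from the paper's model $(\widehat W_t,\widehat N_t,\widehat Q_t,\widehat P_t)$ to yours, commutes with the shift, has first block row $\widehat Q_t$ (so $\widetilde Q_t\widehat A_t=\widehat Q_t$), has last block column $\widetilde P_t$ (so $\widehat A_t\widehat P_t=\widetilde P_t$), and its $j$-th block column equals $S_t^{\,k_t-j}\widetilde P_t$, so its image is exactly your reachable subspace $\sum_i S_t^i(\range\widetilde P_t)$; hence it induces an isomorphism of data from $\widehat W_t/\Ker\widehat A_t$ (the canonical datum) onto your stable datum, and stability transfers. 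Alternatively, verify reachability and observability of the canonical datum directly, as the paper does via Lemma~\ref{lem:stability}. With either patch, your proof establishes the full statement.
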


Using the canonical data, we can define the notion of Harnad dual 
as follows: for given system of the form 
\begin{equation}\label{eq:irreg2}
A(z)=S + A^0(z), \quad 
A^0(z)=\sum_{t \in D}\sum_{k=1}^{k_t} \frac{A_{t,k}}{(z-t)^k},
\quad S,\, A_{t,k} \in \End(V), \quad k_t \in \Z_{>0},
\end{equation}
take the canonical datum $(V,W,T,Q,P)$ for 
the system $A^0(z)$, and set 
$B(\zeta):=T+P(\zeta\,\unit_V -S)^{-1}Q$.
We call the pair $(W,B)$ as the Harnad dual of $(V,A)$ 
and denote it by $\HD(V,A)$ (Definition~\ref{dfn:dual}).
Then the above characterization in terms of the 
geometric invariant theory gives 
the following two basic properties of $\HD$:

\begin{theorem}[Theorem~\ref{thm:duality-cat}]\label{thm:intro-dualcat}
Suppose that a pair $(V,A)$ of the form as in \eqref{eq:irreg2} with $V \neq 0$
satisfies the following two conditions:
\begin{enm}
\item[{\rm (a)}] $(V,A)$ is irreducible, 
namely, $V$ has no nonzero proper subspace preserved by 
all $A_{t,k}$ and $S$;
\item[{\rm (b)}] $(V,A)$ is not equivalent to 
a pair of the form $(\C,s),\, s \in \C$, 
under constant gauge transformation.
\end{enm}
Then $\HD(V,A)$ is also irreducible and 
$\HD \circ \HD(V,A)$ is equivalent to $(V,A)$
under constant gauge transformation.
\end{theorem}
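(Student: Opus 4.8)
The plan is to deduce both assertions from the existence-and-uniqueness theorem for stable data (Proposition~\ref{prop:unique} and Proposition~\ref{prop:explicit}). Write the canonical datum of $A^0(z)$ as $(V,W,T,Q,P)$, so that $A^0(z)=Q(z\,\unit_W-T)^{-1}P$ and $\HD(V,A)=(W,B)$ with $B(\zeta)=T+B^0(\zeta)$, $B^0(\zeta)=P(\zeta\,\unit_V-S)^{-1}Q$. We may assume $T=\bigoplus_{t\in D}T_t$ with $T_t$ having $t$ as its only eigenvalue (the form of the explicit construction, hence of any canonical datum up to isomorphism); expanding the resolvent at each pole and using that the spectral projection onto $W_t$ is a polynomial in $T$, one gets $A_{t,k}=Q\,g_{t,k}(T)\,P$ with $g_{t,k}(T)$ a polynomial in $T$. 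In particular $\range A_{t,k}\subseteq\range Q$, $\Ker A_{t,k}\supseteq\Ker P$, and every $T$-invariant subspace of $W$ is $g_{t,k}(T)$-invariant. I will use the stability of the canonical datum in the form of the reachability condition $\sum_{k\ge0}T^k\range P=W$ and the observability condition $\bigcap_{k\ge0}\Ker(QT^k)=0$ (the content of Definition~\ref{dfn:stable} once the ambient space is fixed).

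\emph{Step 1: the transposed datum $(W,V,S,P,Q)$ is stable.} It realizes $B^0$ directly, namely $P(\zeta\,\unit_V-S)^{-1}Q=B^0(\zeta)$, so I must check $\sum_k S^k\range Q=V$ and $\bigcap_k\Ker(PS^k)=0$. For the first, $V_1:=\sum_k S^k\range Q$ is $S$-invariant and, since $A_{t,k}(V)\subseteq\range Q\subseteq V_1$, invariant under every $A_{t,k}$; by (a) it is $0$ or $V$, and $V_1=0$ would force $Q=0$, hence $A^0=0$, hence $A=S$, hence $\dim V=1$ by (a), i.e. $(V,A)\cong(\C,s)$, contradicting (b). For the second, $V_2:=\bigcap_k\Ker(PS^k)$ is $S$-invariant and contained in $\Ker P$, so $A_{t,k}(V_2)=Q\,g_{t,k}(T)\,P(V_2)=0$; thus $V_2$ is invariant under every $A_{t,k}$ and $S$, hence $0$ or $V$ by (a), and $V_2=V$ would force $P=0$, again the excluded case. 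So $(W,V,S,P,Q)$ is stable.

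\emph{Step 2: self-duality.} Apply the uniqueness part of Proposition~\ref{prop:unique} to the two stable data $(W,V,S,P,Q)$ and the canonical datum of $B^0$ — both with ambient space $W$ and both realizing $B^0$: they are isomorphic. Hence computing $\HD(W,B)=\HD\circ\HD(V,A)$ with $(W,V,S,P,Q)$ in place of the canonical datum of $B^0$ alters the result only by the induced constant gauge transformation, and with this choice $\HD\circ\HD(V,A)=\bigl(V,\ S+Q(z\,\unit_W-T)^{-1}P\bigr)=(V,A)$. Note that since $(V,A)$ satisfies (b) we have $A^0\neq0$, so $W\neq0$ and $\HD(V,A)$ is nonzero.

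\emph{Step 3: irreducibility of $\HD(V,A)=(W,B)$.} Let $W'\subseteq W$ be nonzero and invariant under $T$ and under all coefficients $B_{\eta,j}$ of $B^0$; I claim $W'=W$. Since $B^0(\zeta)=\sum_{\eta,j}B_{\eta,j}(\zeta-\eta)^{-j}$ preserves $W'$ for every $\zeta$, so do the coefficients $PS^mQ$ of its expansion at $\infty$; hence $V':=\sum_{k\ge0}S^kQ(W')$ satisfies $P(V')\subseteq W'$. If $V'=0$ then $Q(W')=0$, so $W'\subseteq\bigcap_k\Ker(QT^k)=0$ by observability, contradicting $W'\neq0$; thus $V'\neq0$. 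Moreover $V'$ is $S$-invariant and $A_{t,k}(V')=Q\,g_{t,k}(T)\,P(V')\subseteq Q\,g_{t,k}(T)(W')\subseteq Q(W')\subseteq V'$, so $V'$ is invariant under every $A_{t,k}$ and $S$; by (a), $V'=V$. Then $\range P=P(V')\subseteq W'$, and $T$-invariance of $W'$ gives $W'\supseteq\sum_k T^k\range P=W$ by reachability, i.e. $W'=W$; together with $W\neq0$ this shows $(W,B)$ is irreducible. The genuinely non-formal content is these two invariant-subspace chases together with the identification of Definition~\ref{dfn:stable} with the reachability/observability conditions; I expect the latter to be the point needing the most care to pin down, everything else being a matter of unwinding the definition of $\HD$.
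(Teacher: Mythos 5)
Your proof is correct, and its skeleton coincides with the paper's: realize $B^0(\zeta)$ by the flipped datum $(W,V,S,P,Q)$, prove that this datum is stable, invoke the uniqueness statement (Proposition~\ref{prop:unique}) to identify it with the canonical datum of $B^0$ up to an isomorphism of the auxiliary space, and read off both $\HD\circ\HD(V,A)\sim(V,A)$ and the irreducibility of $\HD(V,A)$. Where you genuinely diverge is in how the two transfer steps are established. The paper packages them into Lemma~\ref{lem:irr-st} and Lemma~\ref{lem:irred} (irreducibility of the Harnad datum is equivalent to irreducibility of the system, given stability), whose proof uses the Oshima normal form of $N_t$ and the block formula $A_{t,k}=\sum_j Q_{t,j}P_{t,j+k-1}$, together with Lemma~\ref{lem:canonical} for the involutivity; you instead reformulate stability as reachability/observability, $\sum_k T^k\range P=W$ and $\bigcap_k\Ker(QT^k)=0$, which is indeed equivalent to Definition~\ref{dfn:stable} because these are respectively the smallest $T$-invariant subspace containing $\range P$ and the largest contained in $\Ker Q$, and you exploit that the spectral projections onto the $W_t$ are polynomials in $T$, giving $A_{t,k}=Q\,g_{t,k}(T)\,P$, plus the fact that the Taylor coefficients $PS^mQ$ of $B^0$ at $\infty$ are linear combinations of the $B_{s,l}$. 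This yields a more elementary, normal-form-free argument carried out entirely at the level of systems, whereas the paper's heavier lemmas are reused elsewhere (e.g.\ in Theorem~\ref{thm:duality-geom}). Two cosmetic remarks: the assertion in your Step~2 that (b) alone gives $A^0\neq 0$ actually needs (a) as well (exactly as your Step~1 chain $Q=0\Rightarrow A=S\Rightarrow\dim V=1$ uses it), and when applying Proposition~\ref{prop:unique} on the dual side one should note that both data satisfy the eigenvalue constraint \eqref{eq:S}, so that the proposition applies in its $E,\vel$ version; neither point affects correctness.
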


\begin{theorem}[see Theorem~\ref{thm:duality-geom} for the rigorous statement]
The correspondence between $(V,A)$ and $(W,-B)$, 
where $(W,B)=\HD(V,A)$, 
gives a symplectomorphism 
between naive moduli spaces of irreducible systems 
of the form as in \eqref{eq:irreg2} having different 
singularities with different truncated formal types.
\end{theorem}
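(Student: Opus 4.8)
The plan is to realize both naive moduli spaces as the smooth (irreducible) loci inside symplectic reductions of one and the same ``master'' symplectic manifold, in such a way that the Harnad duality becomes, on the level of the master space, an elementary and manifestly anti‑symplectic involution; the sign in $(W,-B)$ is then exactly what is needed to turn it into a genuine symplectomorphism. Concretely, fix the discrete data: the spaces $V,W$, a semisimple $S\in\End(V)$ with centralizer $G_S\subset\GL(V)$, and $T\in\End(W)$ put into the normal form supplied by the canonical‑datum construction, with stabilizer $G_T\subset\GL(W)$. The master space is the cotangent bundle $\mathbf{M}:=T^*\Hom(W,V)=\Hom(W,V)\oplus\Hom(V,W)$ with its canonical symplectic form $\omega_{\mathbf{M}}=\tr(dP\wedge dQ)$, possibly twisted by finite‑dimensional (co)adjoint orbits recording the ``leading'' parts of the truncated formal types, which I suppress. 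It carries two commuting Hamiltonian actions — that of $G_S$, by $(Q,P)\mapsto(gQ,Pg^{-1})$, and that of $G_T$ — with moment maps $\mu_S$ and $\mu_T$ built from $QP$ and $PQ$ respectively; and the swap
\[
\iota\colon(Q,P)\longmapsto(P,Q)\in T^*\Hom(V,W)
\]
interchanges the two pictures while reversing the sign of the symplectic form. Throughout I use that on the irreducible loci the stabilizers of the relevant objects are scalar, so that ``naive moduli space'' is an honest smooth quotient.

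Next I would match the $A$‑side. Given $(Q,P)$, put $A(z)=S+Q(z\,\unit_W-T)^{-1}P$. On the open locus where $(V,W,T,Q,P)$ is the canonical datum of $A^0(z)=Q(z\,\unit_W-T)^{-1}P$ — equivalently, where it is stable (Proposition~\ref{prop:explicit}) — the assignment $(Q,P)\mapsto(V,A)$ descends to a bijection from the irreducible part of an appropriate level set of $\mu_S$, modulo $G_S\times G_T$, onto the naive moduli space $\mathcal{M}_A$ of irreducible systems of the form \eqref{eq:irreg2} with the prescribed singular positions and truncated formal types; surjectivity uses the existence half of the Theorem, while injectivity and the identification of the fibres with $(G_S\times G_T)$‑orbits come from the uniqueness of the canonical datum (Proposition~\ref{prop:unique}). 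The one genuinely computational point is to check that the intrinsic symplectic form on $\mathcal{M}_A$ — the residue pairing $\sum_p\Res_{z=p}\tr\!\big(\delta_1 A\wedge(\text{primitive of }\delta_2 A)\big)$, in Boalch's normalization — pulls back to the reduction of $\omega_{\mathbf{M}}$; this is a direct differentiation of $A=S+Q(z\,\unit_W-T)^{-1}P$ followed by a residue computation, essentially the one of \cite{Harnad} (compare \cite{JMMS,AHH-dual}), now carried out with $T$ non‑semisimple, which is where the normal form of $T$ and the suppressed orbit data must be used carefully.

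The $B$‑side is the same statement read through $\iota$: with $B(\zeta)=T+P(\zeta\,\unit_V-S)^{-1}Q$ one obtains, via the canonical datum of $B^0$ and Propositions~\ref{prop:unique}–\ref{prop:explicit} applied with the roles of $V,W$ (and $S,T$, and $Q,P$) exchanged, an identification of the irreducible part of a level set of $\mu_T$, modulo $G_T\times G_S$, with the naive moduli space $\mathcal{M}_B$ of the systems $(W,B)$, again intertwining the intrinsic form with the reduced $\omega_{\mathbf{M}}$. Finally I would assemble: Theorem~\ref{thm:duality-cat} guarantees that $\HD$ preserves irreducibility and that $\HD\circ\HD$ is the identity up to constant gauge, so $(V,A)\mapsto(W,B)$ really does carry $\mathcal{M}_A$ bijectively onto $\mathcal{M}_B$; under the two realizations above this map is nothing but the master‑space involution $\iota$ (together with $S\leftrightarrow T$, $G_S\leftrightarrow G_T$), which identifies the two reductions as sets. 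Since $\iota$ is anti‑symplectic on $\mathbf{M}$, it is the reduction of $-\omega_{\mathbf{M}}$ that matches up, and the passage from $B$ to $-B$ compensates precisely this sign discrepancy; hence $(V,A)\mapsto(W,-B)$ is the asserted symplectomorphism, yielding the rigorous statement Theorem~\ref{thm:duality-geom}. I expect the main obstacle to be the middle step: pinning down exactly which orbit data to attach to $\mathbf{M}$ so that ``$\mu_S$ cuts out the prescribed truncated formal types at the finite singularities of $A$'' and ``$\mu_T$ cuts out the prescribed truncated formal type at $\infty$ of $B$'' become literally dual under $\iota$, together with the residue computation identifying the intrinsic form with the reduced form in the irregular case; the bijection itself is essentially formal once Propositions~\ref{prop:unique}–\ref{prop:explicit} and Theorem~\ref{thm:duality-cat} are in hand.
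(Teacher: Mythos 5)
Your overall architecture coincides with the paper's: both naive moduli spaces are obtained by reducing the single space $\bM(V,W)$, with the one form $\tr dQ\wedge dP$, by the two commuting Hamiltonian actions in the two possible orders; the bijections with the moduli spaces rest on existence/uniqueness of stable (canonical) data (Propositions~\ref{prop:explicit} and~\ref{prop:unique}) and the matching of irreducibility on Lemma~\ref{lem:irred} and Theorem~\ref{thm:duality-cat}. Your sign bookkeeping (an anti-symplectic swap $(Q,P)\mapsto(P,Q)$ compensated by $B\mapsto -B$) is just a repackaging of the paper's convention that the second moment map is $\Psi_S(Q,P)=-P(\zeta\,\unit_V-S)^{-1}Q$, so that $-T+\Psi_S$ is literally $-B$; that part is fine. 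The problems lie in the two steps you flag but do not carry out, and one of them is a genuine gap rather than bookkeeping.

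The gap: to identify the stable/irreducible locus of $\mu_S^{-1}(\OO_S)\cap\mu_T^{-1}(\OO_T)$ modulo $G_S\times G_T$ with $\Mreg_S(\bO_V,\OO_S)$ (and, through the swap, with $\Mreg_{-T}(\bO_W,\OO_T)$), you must know that $\Phi_T$ maps the whole stable level set $\mu_T^{-1}(\OO_T)^{T\st}$ onto a \emph{single} $G_\vek(V)$-coadjoint orbit, and dually for $\Psi_S$. Existence and uniqueness of canonical data give injectivity of the induced map and surjectivity onto the moduli space (using also the $G_\vek(V)$-invariance of $\mu_T$, Lemma~\ref{lem:invariant}), but they do \emph{not} show that the image of the level set lies in one orbit, i.e.\ that the truncated formal type is constant on the reduced space --- equivalently, that $\HD$ carries all of $\Mreg_S(\bO_V,\OO_S)$ into a single $\Mreg_{-T}(\bO_W,\OO_T)$. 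This is precisely Theorem~\ref{thm:geom}, whose proof is not formal: it uses the dual-pair property (Lemma~\ref{lem:duality}) to see that $G_\vek(V)$-orbits are open in the stable level set, the explicit transitivity Lemma~\ref{lem:transitive}, and Crawley-Boevey's irreducibility criterion to conclude that $\mu_T^{-1}(\alpha)^{T\st}$ is irreducible, hence a single orbit. In your plan this is subsumed under ``pinning down which orbit data to attach,'' which underestimates it: it is the key geometric input, not a normalization issue. Secondly, for the symplectic-form identification the paper performs no residue computation at all: it invokes the Adams--Harnad--Hurtubise moment-map theorem (Theorem~\ref{thm:AHHP}) for the full truncated-jet groups $G_\vek(V)$ and $G_\vel(W)$ (not merely $G_S$, $G_T$, which are the only actions appearing in your proposal) and then obtains the symplectomorphism from Proposition~\ref{prop:geom} and Theorem~\ref{thm:geom} by general reduction theory; your proposed direct computation with non-semisimple $T$ amounts to re-proving Theorem~\ref{thm:AHHP} and is left unexecuted.
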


Now the middle convolution (Definition~\ref{dfn:extmc}) is defined as 
\[
\mc_\alpha(V,A) := \HD \circ \add_\alpha \circ \HD(V,A),
\]
where the parameter $\alpha(\zeta)$ is a rank 1 system having singularities 
at the eigenvalues of $S=\lim_{z \to \infty} A(z)$, 
and $\add_\alpha \colon (W,B) \mapsto (W,B+\alpha)$ 
is the addition operator. 
Note that here we do not require that $S,T$ are semisimple 
or $\alpha$ is Fuchsian.
We obtain the following properties of 
$\mc_\alpha$, which are well-known in the original case,
as corollaries of the above two results on $\HD$:

\begin{corollary}[Corollary~\ref{cor:mc-property1} and Corollary~\ref{cor:mc-property2}]
Suppose that a pair $(V,A)$ of the form as in \eqref{eq:irreg2} 
satisfies Conditions~{\rm (a)} and {\rm (b)} 
in Theorem~\ref{thm:intro-dualcat}.
Then $\mc_0(V,A) \sim (V,A)$.
Furthermore, if a rank 1 system $\alpha(\zeta)$ with singularities at the eigenvalues of $S$ satisfies $\mc_\alpha(V,A) \neq (0,0)$,
then $\mc_\alpha(V,A)$ is also irreducible and
\[
\mc_\beta \circ \mc_\alpha (V,A) 
\sim \mc_{\alpha +\beta}(V,A)
\]
for any $\beta$.
\end{corollary}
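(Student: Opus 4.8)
The plan is to derive both assertions formally from Theorem~\ref{thm:intro-dualcat} (involutivity and irreducibility of $\HD$), using two elementary facts about the addition operator: since adding a rank~$1$ system $\alpha$ changes each coefficient of a system only by a scalar matrix, $\add_\alpha$ preserves irreducibility and commutes with constant gauge transformations; and both $\HD$ and $\add_\alpha$ descend to constant-gauge-equivalence classes, the statement for $\HD$ being a direct consequence of the isomorphism characterization of stable data (Proposition~\ref{prop:unique}). For the first assertion, note that $\add_0$ is the identity operator, so $\mc_0 = \HD\circ\add_0\circ\HD = \HD\circ\HD$; since $(V,A)$ satisfies Conditions~(a) and~(b), Theorem~\ref{thm:intro-dualcat} gives $\mc_0(V,A)=\HD\circ\HD(V,A)\sim(V,A)$.

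For the second assertion, set $(W,B):=\HD(V,A)$. By Theorem~\ref{thm:intro-dualcat}, $(W,B)$ is irreducible, and moreover $W\neq 0$, for otherwise $\HD\circ\HD(V,A)$ would have underlying space $0\neq V$, again contradicting Theorem~\ref{thm:intro-dualcat}. Now $(W,B+\alpha)=\add_\alpha(W,B)$ has exactly the same invariant subspaces as $(W,B)$, hence is irreducible, so Condition~(a) holds for it. It also satisfies Condition~(b): if $(W,B+\alpha)$ were equivalent to some $(\C,s)$, then $\mc_\alpha(V,A)=\HD(W,B+\alpha)\sim\HD(\C,s)=(0,0)$ — the last equality because the canonical datum of the zero system is the trivial one with $W=0$ — contradicting the hypothesis $\mc_\alpha(V,A)\neq(0,0)$. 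Therefore Theorem~\ref{thm:intro-dualcat} applies to $(W,B+\alpha)$, showing that $\mc_\alpha(V,A)=\HD(W,B+\alpha)$ is irreducible and that $\HD\circ\HD(W,B+\alpha)\sim(W,B+\alpha)$.

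It remains to establish the composition law. Since $\mc_\alpha(V,A)=\HD(\HD(W,B+\alpha))$, we compute, using the relation just obtained and the fact that $\HD$ and $\add_\beta$ respect $\sim$:
\begin{align*}
\mc_\beta\circ\mc_\alpha(V,A)
&= \HD\bigl(\add_\beta\bigl(\HD\circ\HD(W,B+\alpha)\bigr)\bigr)\\
&\sim \HD\bigl(\add_\beta(W,B+\alpha)\bigr)
= \HD\bigl(\add_{\alpha+\beta}(\HD(V,A))\bigr)
= \mc_{\alpha+\beta}(V,A),
\end{align*}
where we used $\add_\beta\circ\add_\alpha=\add_{\alpha+\beta}$. (That the singularities of $\beta$ lie at the eigenvalues of $\lim_{z\to\infty}$ of the first component of $\mc_\alpha(V,A)$ — which are among the poles of the pole part of $B+\alpha$, hence among the eigenvalues of $S$ — is routine bookkeeping needed only so that $\mc_\beta$, $\add_\beta$ are defined on the relevant pairs.)

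The step I expect to require the most care is the verification that $(W,B+\alpha)$ satisfies Condition~(b): this is precisely where the nonvanishing hypothesis $\mc_\alpha(V,A)\neq(0,0)$ enters, via the identity $\HD(\C,s)=(0,0)$. Everything else is formal, once it has been recorded that $\HD$ and $\add_\alpha$ pass to constant-gauge-equivalence classes.
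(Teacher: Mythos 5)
Your proposal is correct and follows essentially the same route as the paper: identify $\mc_0=\HD\circ\HD$, note that $\add_\alpha$ preserves irreducibility, use the exceptional-case characterization ($\HD(\C,s)=(0,0)$, cf.\ Corollary~\ref{cor:exceptional}) together with the hypothesis $\mc_\alpha(V,A)\neq(0,0)$ to verify the hypotheses of Theorem~\ref{thm:duality-cat} for $\add_\alpha\circ\HD(V,A)$, and then obtain the composition law from $\HD\circ\HD\sim\mathrm{id}$ and $\add_\beta\circ\add_\alpha=\add_{\alpha+\beta}$. This matches the paper's own proof of Corollaries~\ref{cor:mc-property1} and~\ref{cor:mc-property2}.
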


\begin{corollary}[see Corollary~\ref{cor:mc-isom} for the rigorous statement]
The middle convolution $\mc_\alpha$ gives a symplectomorphism 
between naive moduli spaces of irreducible systems 
of the form as in \eqref{eq:irreg2} having different 
singularities with different truncated formal types.
\end{corollary}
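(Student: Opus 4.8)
The strategy is to deduce the claim from Theorem~\ref{thm:duality-geom} by exploiting the factorization $\mc_\alpha=\HD\circ\add_\alpha\circ\HD$ of Definition~\ref{dfn:extmc}: the whole problem reduces to the two geometric properties of $\HD$ already in hand, together with the elementary fact that the addition operator $\add_\alpha$ is a symplectomorphism.

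First I would make the indexing data of all the moduli spaces explicit. Up to constant gauge transformation, a pair $(V,A)$ as in \eqref{eq:irreg2} is a point of a naive moduli space $\mathcal{M}$ determined by the singular locus $D\cup\{\infty\}$, the pole orders $k_t$, the conjugacy data of $S=\lim_{z\to\infty}A(z)$, and the truncated formal types over $D$. Using the canonical datum $(V,W,T,Q,P)$ for $A^0(z)$ (Proposition~\ref{prop:explicit}) and the formula $B(\zeta)=T+P(\zeta\,\unit_V-S)^{-1}Q$ of Definition~\ref{dfn:dual}, one reads off the corresponding data for $\HD(V,A)=(W,B)$: the irregular part of $B$ is concentrated at $\zeta=\infty$ with leading term $T$, whose Jordan structure is governed by the orders $k_t$ and by the multiplicities recorded in the construction of the canonical datum, while the remaining poles of $B$ sit at the eigenvalues of $S$ with truncated formal types determined by those of $(V,A)$. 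This produces an explicit bijection $\mathcal{M}\leftrightarrow\mathcal{M}^\vee$ between index sets, and Theorem~\ref{thm:duality-geom} is precisely the assertion that $(V,A)\mapsto(W,-B)$ is a symplectomorphism $\mathcal{M}\xrightarrow{\sim}\mathcal{M}^\vee$.

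Next I would treat $\add_\alpha\colon(W,B)\mapsto(W,B+\alpha)$, where $\alpha$ is a fixed rank $1$ system supported on the eigenvalues of $S$. On $\mathcal{M}^\vee$ this translates each coefficient of $B$ at those points by a scalar matrix and shifts the leading term of $B$ at $\infty$ by the constant term of $\alpha$; it is therefore an affine isomorphism onto a moduli space $\mathcal{M}^\vee_\alpha$ of systems of the form \eqref{eq:irreg2} with the same pole orders but shifted truncated formal types, and it preserves the symplectic form because the relevant coadjoint-orbit symplectic forms are insensitive to translation by central (scalar) elements ($\tr[X,Y]=0$). Applying $\HD$ once more, using Theorem~\ref{thm:duality-geom} again to see that this last step is a symplectomorphism onto a moduli space $\mathcal{M}_\alpha$ of systems as in \eqref{eq:irreg2}, and Theorem~\ref{thm:duality-cat} to see that the target really does consist of irreducible systems, we exhibit $\mc_\alpha$ as the composite
\[
\mathcal{M}\xrightarrow{\ \HD\ }\mathcal{M}^\vee\xrightarrow{\ \add_\alpha\ }\mathcal{M}^\vee_\alpha\xrightarrow{\ \HD\ }\mathcal{M}_\alpha
\]
of symplectomorphisms, hence a symplectomorphism; consistently, Corollary~\ref{cor:mc-property1} identifies the inverse as $\mc_{-\alpha}$. (The sign flip $B\mapsto-B$ implicit in Theorem~\ref{thm:duality-geom} is anti-symplectic, but it occurs an even number of times in $\mc_\alpha$, so its effect cancels.)

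The main obstacle is bookkeeping rather than any new idea: one must pin down exactly how the pole orders $k_t$, the partition of $\dim W$ encoding the Jordan type of $T$, and the truncated formal types over $D$ transform under $\HD$, so that the three arrows above compose on the nose between the correct moduli spaces; and one must check that the hypothesis $\mc_\alpha(V,A)\neq(0,0)$ carves out precisely the open subset on which the composite lands in a moduli space of irreducible systems with $V\neq0$. A secondary point to handle with care is that a ``naive moduli space'' is an orbit space which is a symplectic manifold only on its irreducible locus (where the gauge group acts freely modulo scalars); here irreducibility is preserved at every stage by Theorem~\ref{thm:duality-geom} and the irreducibility parts of Corollaries~\ref{cor:mc-property1} and \ref{cor:mc-property2}, so each arrow in the chain is a genuine symplectomorphism of symplectic manifolds and the composite is the asserted one.
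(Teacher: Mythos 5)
Your proposal is correct and takes essentially the same route as the paper, which declares Corollary~\ref{cor:mc-isom} immediate from Theorem~\ref{thm:duality-geom} and Remark~\ref{rem:moduli}: factor $\mc_\alpha=\HD\circ\add_\alpha\circ\HD$, apply the duality symplectomorphism for each $\HD$, note that $\add_\alpha$ is a translation by central (scalar) elements and hence symplectic, and track irreducibility and the condition $\mc_\alpha(V,A)\neq(0,0)$ via Corollary~\ref{cor:mc-property2}, with the two sign flips $B\mapsto-B$ cancelling. The only cosmetic slip is that $\alpha\in\EE_\vel(\C)$ has no constant term, so $\add_\alpha$ leaves the leading coefficient $T$ of $B$ at $\infty$ unchanged and only shifts the residue there by a central element, which does not affect your argument.
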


Arinkin also generalized Katz's middle convolution 
to the irregular singular case 
in $\mathscr{D}$-module setting~\cite{Arin-mc},
and generalized Katz's algorithm 
by adding the Fourier transform to the original one
~\cite{Arin-rigid}.
We show using our generalized middle convolution 
that Katz's algorithm works well for 
`naively rigid systems having a normal form'
(see Definition~\ref{dfn:rigid} and Definition~\ref{dfn:HTL}), 
which corresponds to the case that 
the Fourier transform is not necessary 
in Arinkin's generalized algorithm:

\begin{theorem}[Corollary~\ref{cor:Katz}]
Suppose that a pair $(V,A)$ of the form as in \eqref{eq:irreg1} 
with $\dim V \geq 2,\, \Res_{z=\infty} A(z)=0$ 
is irreducible, naively rigid, 
and has a normal form at any $t \in D$.
Then applying a suitable finite iteration of operations of the form
\[
\add_\alpha \circ \mc_{\lambda/\zeta} \circ \add_\alpha, \quad
\alpha \in \EE_\vek(\C),\; \lambda =\Res_{z=\infty} \alpha(z),
\]
makes $(V,A)$ into an irreducible pair of rank 1.
\end{theorem}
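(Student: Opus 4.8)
The plan is to run Katz's reduction algorithm, in the quiver/root-theoretic form adapted to irregular singularities by Crawley--Boevey and Boalch, using the two operations $\add_\alpha$ and $\mc_{\lambda/\zeta}$ as the elementary reflections. First I would attach to a pair $(V,A)$ as in \eqref{eq:irreg1} with a fixed normal form at each $t\in D$ a combinatorial datum: a dimension vector $\mathbf d$ for a star-shaped quiver (the irregular parts at the points of $D$ contributing extra legs/vertices, in the manner of Boalch's fission data), whose central coordinate is $\dim V$ and whose leg coordinates record the multiplicities in the truncated formal types. Naive rigidity (Definition~\ref{dfn:rigid}) then translates into $\mathbf d$ being a positive real root of the associated Kac--Moody form, i.e.\ $(\mathbf d,\mathbf d)=2$. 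By Corollary~\ref{cor:mc-property1} and Corollary~\ref{cor:mc-property2}, together with Theorem~\ref{thm:intro-dualcat}, both $\add_\alpha$ and $\mc_{\lambda/\zeta}$ are well defined on irreducible pairs and preserve irreducibility.

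Next I would identify the operations with reflections. The operation $\add_\alpha$ with $\alpha\in\EE_\vek(\C)$ changes only the truncated formal types at $D\cup\{\infty\}$ while fixing $\dim V$, hence corresponds to a composite of reflections at leg vertices; and $\mc_{\lambda/\zeta}$, via $\mc_\alpha=\HD\circ\add_\alpha\circ\HD$ and the explicit description of the canonical datum underlying $\HD$ (Proposition~\ref{prop:explicit}), is the reflection at the central vertex: one uses $\HD$ to interchange the rôle of $\dim V$ with the data at $\infty$, recovering the Dettweiler--Reiter dimension formula and its irregular analogue for $\dim\mc_{\lambda/\zeta}(V,A)$. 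I would then check that $\add_\alpha\circ\mc_{\lambda/\zeta}\circ\add_\alpha$ keeps $(V,A)$ in the class of \eqref{eq:irreg1} with $\Res_{z=\infty}A=0$ and a normal form at every $t\in D$: the two $\add$'s are chosen precisely to renormalize the residue at $\infty$ to $0$ before and after convolving, and the normal form at $t\in D$ survives because $\HD$ turns it into a second-order pole at $\infty$ of the required shape and conversely. Since reflections preserve the quadratic form, naive rigidity — being a positive real root — is preserved throughout the iteration.

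Finally I would carry out the descent. If $\dim V\ge 2$ then $\mathbf d$ is a positive real root which is not a coordinate vector, so some reflection strictly lowers its height; the minimality forced by rigidity together with the normal-form hypothesis (which confines the quiver to the star-shaped class, so that all the imbalance can be moved onto the central vertex after a suitable preliminary $\add_\alpha$) allows one to take this reflection to be the central one, producing $\alpha,\lambda$ with $\dim\bigl(\mc_{\lambda/\zeta}\circ\add_\alpha(V,A)\bigr)<\dim V$; a final $\add_\alpha$ restores $\Res_{z=\infty}=0$. Iterating yields a strictly decreasing sequence of positive integers $\dim V$, so the process terminates, necessarily at $\dim V=1$, and an irreducible pair with $\dim V=1$ is tautologically of rank $1$. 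I expect the main obstacle to be exactly this descent step, coupled with the stability of the normal-form property along the iteration: one must verify that the generalized $\mc$ of this article really does realize the central reflection at the level of dimension vectors for \emph{irregular} systems carrying a normal form — this is where Proposition~\ref{prop:explicit} and a careful bookkeeping of truncated formal types are indispensable — and that no step introduces ramification or otherwise spoils the unramified diagonal shape at some $t\in D$; controlling the latter is precisely the point at which the hypotheses $\Res_{z=\infty}A=0$ and ``normal form at every $t$'' are genuinely used.
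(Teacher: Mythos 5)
Your overall skeleton agrees with the paper's: iterate an operation of the form $\add_\alpha\circ\mc_{\lambda/\zeta}\circ\add_\alpha$ that strictly lowers the rank while preserving irreducibility, naive rigidity, the condition $\Res_{z=\infty}A=0$ and the normal forms at the points of $D$. But you replace the quantitative core by an unproven root-theoretic dictionary, and that dictionary is exactly what is not available at this level of generality. The identification of naive rigidity with ``the dimension vector is a positive real root of a star-shaped Kac--Moody diagram'', and of $\add$ and $\mc_{\lambda/\zeta}$ with leg and central reflections, is only justified in the paper when both $S$ and $T$ are semisimple (Remark~\ref{rem:bipart}); here the residues $\Gamma_\lambda$ in the normal forms are arbitrary, so $T$ in the canonical datum is not semisimple and the moduli spaces are not identified with quiver varieties. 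Consequently the decisive descent step --- the existence of $\alpha\in\EE_\vek(\C)$ with $\lambda=-\Res_{z=\infty}\alpha(z)\neq 0$ such that $\mc_{\lambda/\zeta}\circ\add_{-\alpha}$ strictly drops the rank --- cannot be waved through as ``a real root that is not a coordinate vector admits a height-lowering reflection, which can be taken central''. In the paper this is proved by hand: Lemma~\ref{lem:Katz} chooses, via Oshima's normal form for the non-semisimple $\Gamma_\lambda$, an $\alpha_t$ with $\dim Z(A_t)\le \dim V\cdot\dim\Ker\bigl(\widehat{A}_t-\widehat{\alpha_t\,\unit_V}\bigr)$; naive rigidity gives $\dim\bO(A)=2\dim\GL(V)-2$, and the count in Theorem~\ref{thm:Katz} turns this into $\dim W<2\dim V$ for the dual of $\add_{-\alpha}(V,A)$; Lemma~\ref{lem:Scott} (rank of $\HD$ is at least $2\dim V$ for irreducible objects of $\DDk^0$ of rank $\ge 2$) then forces $\lambda\neq 0$, and Example~\ref{ex:ref} gives the new rank $\dim W-\dim V<\dim V$. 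You explicitly flag this step as ``the main obstacle'' but supply no argument for it, so the proposal has a genuine gap precisely where the theorem lives.

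Two further points. The survival of the normal form at each $t\in D$ along the iteration is not a matter of ``$\HD$ turning it into a second-order pole at $\infty$'': the pole at $\infty$ of the dual encodes $S$ and $-PQ$, whereas the structure at $t$ is carried by $(W_t,N_t)$ and the $G_{N_t}$-coadjoint orbit, which the addition shifts by $p_{N_t}\bigl(\beta\,\unit_{W_t}\bigr)$; that the shifted orbit again contains a normal form with the same nonzero spectra is the nontrivial content of Lemma~\ref{lem:mc-sing} and Proposition~\ref{prop:mc-sing}, which your sketch does not reproduce. On the other hand, the bookkeeping you delegate to general principles does hold where you need it: irreducibility is preserved by Corollary~\ref{cor:mc-property2}, and naive rigidity is preserved because the operations induce symplectomorphisms of the naive moduli spaces (Corollary~\ref{cor:mc-isom}), so the zero-dimensionality persists; those parts of your plan are sound. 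To repair the proposal you would either have to establish the quiver/root dictionary for these truncated orbits with non-semisimple residues (which is not done in the paper and is not routine), or revert to the paper's direct centralizer estimates.
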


\section{Setting}\label{sec:setting}

For a positive integer $k$, we set
\[
S_k := \C[z]/z^k\C[z], \quad S^k := z^{-k}\C[z]/\C[z].
\]
$S_k$ is a $k$-dimensional $\C$-algebra, 
and the pairing 
\[
S_k \otimes_\C S^k \to \C; \quad (f,g) \mapsto \Res_{z=0} (f(z)g(z))
\]
gives an identification $S_k^*=\Hom_\C (S_k,\C) \simeq S^k$.

For a finite-dimensional $\C$-vector space $V$, set 
\[
G_k(V):=\Aut_{S_k}(S_k \otimes_\C V), \quad
\g_k(V):=\End_{S_k}(S_k \otimes_\C V).
\]
Any element of $\g_k(V)$ is uniquely written as 
\[
\xi_0 + \xi_1 z + \cdots + \xi_{k-1} z^{k-1}, 
\quad \xi_i \in \End(V),
\]
and $G_k(V)$ is just the subset of $\g_k(V)$ 
defined by the condition $\det \xi_0 \neq 0$.
It has naturally a structure of complex algebraic group 
and the associated Lie algebra is nothing but $\g_k(V)$. 
The $\C$-dual $\g_k^*(V)$ of $\g_k(V)$ can be identified with the set
\[
\{\, \eta_1 z^{-1} + \eta_2 z^{-2} + \cdots + \eta_k z^{-k} 
\mid \eta_i \in \End(V) \,\}.
\]
For $\eta=\sum_j \eta_j z^{-j} \in \g^*_k(V)$, 
we denote by $\ord(\eta)$ the pole order of $\eta$;
\begin{equation}\label{eq:order}
\ord(\eta):=\max(\{\,j \geq 1 \mid \eta_j \neq 0\,\} \cup \{ 0 \}),
\end{equation}
which is preserved under the $G_k(V)$-coadjoint action.

For a collection $\vek=(k_t)_{t \in D}$ of positive integers 
indexed by a finite set $D$ of points in $\C$, 
we set
\[
\EE_\vek (V) := 
\lset{A(z)=\sum_{t \in D}\sum_{k=1}^{k_t} \frac{A_{t,k}}{(z-t)^k}}%
{A_{t,k} \in \End(V)},
\]
and regard $A(z) \in \EE_\vek (V)$ as 
a system of linear ordinary differential equations
\[
\frac{du}{dz} = A(z)u
\]
with singularities on $D \cup \{ \infty \}$.
If we define
\[
G_\vek (V) := \prod_{t \in D} G_{k_t}(V), \quad 
\g_\vek (V) := \bigoplus_{t \in D} \g_{k_t}(V), \quad
\g_\vek^* (V) := \bigoplus_{t \in D} \g_{k_t}^*(V),
\]
then $\EE_\vek (V)$ is isomorphic to $\g_\vek^*(V)$ by the map
\[
\EE_\vek (V) \xrightarrow{\simeq} \g_\vek^*(V); \quad 
A(z) \mapsto \left( \sum A_{t,k} z^{-k} \right)_{t \in D},
\]
and hence $G_\vek (V)$ naturally acts on $\EE_\vek (V)$.

Now for two finite-dimensional $\C$-vector spaces $V$ and $W$, we set
\[
\bM(V,W) := \Hom(W,V) \oplus \Hom(V,W).
\]
It has a natural symplectic structure 
\[
\omega = \tr dQ \wedge dP, \quad (Q,P) \in \bM(V,W),
\]
and the group $\GL(V) \times \GL(W)$ 
acts symplectomorphically on $\bM(V,W)$ by
\[
(a,b) \cdot (Q,P) := (aQb^{-1}, bPa^{-1}), \quad 
(a,b) \in \GL(V) \times \GL(W).
\]
Note that the map $(\mu_V,\mu_W) \colon \bM(V,W) \to \gl(V) \oplus \gl(W)$ defined by
\[
\mu_V(Q,P) := QP, \quad \mu_W (Q,P) := -PQ,
\]
is a moment map generating the $\GL(V) \times \GL(W)$-action, 
where we identify the Lie algebras $\gl(V), \gl(W)$ 
with those duals via the trace pairing. 
Let $T \in \End(W)$ be an endomorphism with eigenvalues in $D$ and
$W=\bigoplus_{t \in D} W_t$ be its generalized eigenspace decomposition. 
Let $N_t := T|_{W_t}-t\,\unit_{W_t} \in \End(W_t)$ be the
nilpotent part of $T$ restricted to $W_t$. 
Then we can consider the map
\[
\Phi_T \colon \bM(V,W) \to \EE_\vek(V); \quad 
(Q,P) \mapsto Q(z\,\unit_W - T)^{-1}P,
\]
where $k_t= \min \{ j \in \Z_{>0}; N_t^j=0 \}$.
To see it is well-defined, for $(Q,P) \in \bM(V,W)$, 
let $(Q_t,P_t) \in \bM(V,W_t)$ be its $\bM(V,W_t)$-component 
with respect to the decomposition $\bM(V,W)=\bigoplus \bM(V,W_t)$ 
induced from $W=\bigoplus W_t$.
Then we have 
\begin{align*}
Q \left( z\,\unit_W - T \right)^{-1} P &=
\sum_{t \in D} Q_t \left[ (z-t)\,\unit_{W_t} -N_t \right]^{-1} P_t \\
&= \sum_{t \in D} (z-t)^{-1} 
   Q_t \left[ \unit_{W_t} -(z-t)^{-1}N_t \right]^{-1} P_t \\
&= \sum_{t \in D} \sum_{k=1}^{k_t} \frac{Q_t N_t^{k-1} P_t}{(z-t)^k}.
\end{align*}
Therefore $\Phi_T(Q,P)$ can be considered as an element of $\EE_\vek (V)$. 

\begin{theorem}[Adams-Harnad-Hurtubise-Previato]\label{thm:AHHP}
For $g=(g_t(z)) \in G_\vek (V)$ 
and $(Q,P) \in \bM(V,W)$, let  
$(g \cdot Q,g \cdot P) \in \bM(V,W)$ 
be the point given by the following formulae:
\begin{align*}
g_t(z)Q_t (z-N_t)^{-1} &= (g \cdot Q)_t(z-N_t)^{-1} 
+ \text{\rm holomorphic}, \\ 
(z-N_t)^{-1}P_t g_t^{-1}(z) &= (z-N_t)^{-1}(g \cdot P)_t 
+ \text{\rm holomorphic}.
\end{align*}
Then it gives a well-defined Hamiltonian action of 
$G_\vek (V)$ on $\bM(V,W)$ 
with moment map $\Phi_T$.
\end{theorem}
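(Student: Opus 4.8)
The plan is to unwind the defining relations into explicit linear formulae, verify the group–action axioms and symplecticity by short direct arguments, and then recognise $\Phi_T$ as the canonical quadratic moment map of a linear symplectic action, so that the moment map identity collapses to a one‑line residue computation. First I would write $g_t(z)=\sum_{i=0}^{k_t-1}g^{(t)}_iz^i$ and $g_t^{-1}(z)=\sum_{i=0}^{k_t-1}\tilde g^{(t)}_iz^i$ for chosen polynomial representatives, and recall that $(z-N_t)^{-1}=\sum_{j\ge 0}N_t^jz^{-j-1}$ is a finite sum since $N_t^{k_t}=0$. Multiplying out $g_t(z)Q_t(z-N_t)^{-1}$ and collecting the terms of negative degree in $z$ — using that $N_t$ commutes with $(z-N_t)^{-1}$ — one finds that this negative part equals $\bigl(\sum_i g^{(t)}_iQ_tN_t^i\bigr)(z-N_t)^{-1}$, and likewise the negative part of $(z-N_t)^{-1}P_tg_t^{-1}(z)$ equals $(z-N_t)^{-1}\bigl(\sum_i N_t^iP_t\tilde g^{(t)}_i\bigr)$. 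This proves existence, with $(g\cdot Q)_t=\sum_i g^{(t)}_iQ_tN_t^i$ and $(g\cdot P)_t=\sum_i N_t^iP_t\tilde g^{(t)}_i$; uniqueness is immediate, since $X(z-N_t)^{-1}$ (resp.\ $(z-N_t)^{-1}Y$) has vanishing negative part only if $X=0$ (resp.\ $Y=0$), as one reads off the coefficient of $z^{-1}$. Finally, since $z^{k_t}(z-N_t)^{-1}$ is a polynomial, changing the representatives $g_t(z),g_t^{-1}(z)$ by multiples of $z^{k_t}$ alters the two products only by holomorphic terms, so the construction depends only on $g\in G_\vek(V)$.

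Next I would verify the action axioms. That the identity element of $G_\vek(V)$ acts trivially is clear. For $g,h\in G_\vek(V)$, using $(gh)_t(z)=g_t(z)h_t(z)$ gives $(gh)_t(z)Q_t(z-N_t)^{-1}=g_t(z)\bigl[(h\cdot Q)_t(z-N_t)^{-1}+(\text{holomorphic})\bigr]$; as $g_t(z)$ is a polynomial it carries holomorphic terms to holomorphic terms, so the negative part of the left‑hand side coincides with that of $g_t(z)(h\cdot Q)_t(z-N_t)^{-1}$, namely $(g\cdot(h\cdot Q))_t(z-N_t)^{-1}$; by uniqueness $(gh)\cdot Q=g\cdot(h\cdot Q)$, and the same argument with $(gh)_t^{-1}=h_t^{-1}g_t^{-1}$ and right multiplication handles $P$. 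The action is manifestly linear in $(Q,P)$, and it is symplectic: from $g_t^{-1}(z)g_t(z)=\unit_V$ in $S_{k_t}\otimes\End(V)$ we get $\sum_{i+j=m}\tilde g^{(t)}_jg^{(t)}_i=\delta_{m,0}\,\unit_V$ for $0\le m\le k_t-1$, so, using $N_t^m=0$ for $m\ge k_t$ and cyclicity of the trace, $\tr\bigl((g\cdot Q)(g\cdot P)\bigr)=\sum_{t,m}\tr\bigl((\sum_{i+j=m}\tilde g^{(t)}_jg^{(t)}_i)Q_tN_t^mP_t\bigr)=\sum_t\tr(Q_tP_t)=\tr(QP)$. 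Since the action is linear, this identity forces it to preserve $\omega=\tr\,dQ\wedge dP$.

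For the moment map I would use that every linear symplectic action of a Lie group on a symplectic vector space admits the equivariant quadratic moment map $\mu$ with $\langle\mu(m),\xi\rangle=\frac{1}{2}\,\omega(\xi\cdot m,m)$, where $\xi\cdot m$ is the infinitesimal action; it automatically satisfies $d\langle\mu,\xi\rangle=\iota_{X_\xi}\omega$ for the fundamental vector field $X_\xi$, in the sign conventions under which $(\mu_V,\mu_W)$ above is a moment map. It therefore suffices to check $\Phi_T=\mu$. Differentiating the formulae of the first step at the identity gives, for $\xi=(\xi^{(t)}(z))_{t\in D}\in\g_\vek(V)$ with $\xi^{(t)}(z)=\sum_i\xi^{(t)}_iz^i$, the infinitesimal action $\xi\cdot(Q,P)=(\dot Q,\dot P)$ with $\dot Q_t=\sum_i\xi^{(t)}_iQ_tN_t^i$ and $\dot P_t=-\sum_iN_t^iP_t\xi^{(t)}_i$; hence, by cyclicity of the trace, $\frac{1}{2}\,\omega(\xi\cdot(Q,P),(Q,P))=\frac{1}{2}\tr(\dot QP-Q\dot P)=\sum_{t,i}\tr(\xi^{(t)}_iQ_tN_t^iP_t)$. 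On the other hand the $t$‑component of $\Phi_T(Q,P)$ in $\g_\vek^*(V)$ is $Q_t(z-N_t)^{-1}P_t=\sum_{j\ge 0}Q_tN_t^jP_tz^{-j-1}$, so $\langle\Phi_T(Q,P),\xi\rangle=\sum_t\Res_{z=0}\tr\bigl(Q_t(z-N_t)^{-1}P_t\,\xi^{(t)}(z)\bigr)=\sum_{t,i}\tr(\xi^{(t)}_iQ_tN_t^iP_t)$, the same expression. Thus $\Phi_T=\mu$, and in particular $\Phi_T$ is $G_\vek(V)$‑equivariant for the coadjoint action; alternatively one could verify equivariance directly by an analogous residue computation, using that the coadjoint action of $g$ on $\g_{k_t}^*(V)$ is the negative part of $g_t(z)\,(\cdot)\,g_t^{-1}(z)$.

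The only step that really demands care is the well‑definedness in the first paragraph — namely, that the relevant negative parts are genuinely of the special shapes $X(z-N_t)^{-1}$ and $(z-N_t)^{-1}Y$, so that $g\cdot Q$ and $g\cdot P$ are determined at all — together with the bookkeeping of the domains and codomains of $g^{(t)}_i$, $Q_t$, $N_t$, $P_t$. After that the action axioms, symplecticity, and the moment map identity are all short direct computations, the last one made painless by recognising $\Phi_T$ as the canonical moment map of a linear symplectic action.
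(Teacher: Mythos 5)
Your proof is correct, but it is worth noting that the paper does not actually prove Theorem~\ref{thm:AHHP}: it attributes the result to Adams--Harnad--Previato (semisimple $T$) and Adams--Harnad--Hurtubise (general $T$), where it is stated in terms of moment maps into loop algebras, and merely remarks that the finite-dimensional statement ``can easily be derived from it,'' recording only the explicit formulae \eqref{eq:action}. Your argument replaces that citation by a self-contained finite-dimensional verification: extracting the negative parts via $z^i(z\,\unit-N_t)^{-1}=N_t^i(z\,\unit-N_t)^{-1}+(\text{polynomial})$ recovers exactly \eqref{eq:action}, the uniqueness/representative-independence remarks make the group law a one-line consequence, and identifying $\Phi_T$ with the canonical quadratic moment map $\langle\mu(m),\xi\rangle=\tfrac12\omega(\xi\cdot m,m)$ of a linear symplectic action reduces the moment-map and equivariance claims to the residue computation $\langle\Phi_T(Q,P),\xi\rangle=\sum_{t,i}\tr\bigl(\xi^{(t)}_iQ_tN_t^iP_t\bigr)$, with signs matching the paper's convention $\mu_V(Q,P)=QP$. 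Two steps deserve an explicit word: the composition law uses that polynomial representatives of $(gh)_t$ and $g_th_t$ differ by multiples of $z^{k_t}$, which your well-definedness remark covers; and the inference from $\tr\bigl((g\cdot Q)(g\cdot P)\bigr)=\tr(QP)$ to invariance of $\omega=\tr\,dQ\wedge dP$ uses that $g$ acts on the $\Hom(W,V)$- and $\Hom(V,W)$-factors separately (so the identity polarizes to invariance of the pairing); both are fine as written but are the only places where the brevity hides an appeal to structure. What the citation route buys is the broader loop-group picture (the dual pair of moment maps emphasized in Remark~\ref{rem:duality}); what your route buys is a short, elementary proof that simultaneously establishes the explicit action \eqref{eq:action} used repeatedly later in the paper.
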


More explicitly, $(g \cdot Q,g \cdot P)$ is defined by
\begin{equation}\label{eq:action}
(g\cdot Q)_t \equiv g_t \cdot Q_t 
:= \sum_{k=0}^{k_t-1} g_{t,k} Q_t N_t^k, 
\quad 
(g\cdot P)_t \equiv g_t \cdot P_t 
:= \sum_{k=0}^{k_t-1} N_t^k P_t (g_t^{-1})_k, 
\end{equation}
where $g_t(z)=\sum g_{t,k} z^k,\; g_t^{-1}(z)=\sum (g_t^{-1})_k z^k$. 
Note that the above gives 
actions on $\Hom(W,V)$ and $\Hom(V,W)$ separately 
and the two are coadjoint to each other. 

The proof was given by Adams-Harnad-Previato~\cite{AHP-iso1} 
in the case that $T$ is semisimple 
and by Adams-Harnad-Hurtubise~\cite{AHH-dual} in general cases.
Strictly speaking, the original result is stated in terms of loop group action, 
however one can easily derive the above from it.

Recall that any moment map is Poisson. Therefore the above theorem connects 
two Poisson manifolds $\bM(V,W)$ and $\EE_\vek (V)$, 
the space of systems of linear differential equations, via $\Phi_T$.

Now, take one more finite set $E$ of points in $\C$ 
and a collection $\vel =(l_s)_{s\in E}$ of positive integers 
indexed by $E$.
Let $S$ be an endomorphism of $V$ with eigenvalues in $E$ 
and $V=\bigoplus_{s \in E} V_s$ be its generalized eigenspace 
decomposition. 
For any $s \in E$, set $M_s := S|_{V_s}-s\,\unit_{V_s}$ and 
suppose $M_s^{l_s}=0$.
Then we can also define
\[
\Psi_S \colon \bM(V,W) \to \EE_\vel(W), \quad
(Q,P) \mapsto -P(\zeta\,\unit_V -S)^{-1}Q,
\]
where we denote the indeterminate by $\zeta$ instead of $z$.
It is a moment map generating the following $G_\vel(W)$-action 
on $\bM(V,W)$:
\[
(h\cdot Q)_s \equiv h_s \cdot Q_s 
:= \sum_{l=0}^{l_s-1} M_s^l Q_s (h_s^{-1})_l, \quad
(h\cdot P)_s \equiv h_s \cdot P_s 
:= \sum_{l=0}^{l_s-1} h_{s,l} P_s M_s^l, 
\] 
where $(Q_s,P_s)$ is the $\bM(V_s,W)$-component of $(Q,P)$ and
\[
h=(h_s)_{s \in E} \in G_\vel(W), \quad 
h_s(\zeta)=\sum_{l=0}^{l_s} h_{s,l} \zeta^l,\; 
h_s^{-1}(\zeta)=\sum_{l=0}^{l_s} (h_s^{-1})_l \zeta^l.
\]

Throughout this article, we fix two nonempty finite sets 
$D, E$ of points in $\C$, 
and collections $\vek =(k_t)_{t \in D},\, \vel =(l_s)_{s \in E}$ of 
positive integers indexed by $D, E$ respectively.

\section{Preliminary results from GIT}\label{sec:quiver}

Recall that a {\em quiver}
is a quadruple $\QQ=(I,\Omega,\vout,\vin)$ 
consisting of two sets $I, \Omega$ 
and two maps $\vout, \vin \colon \Omega \to I$.
The sets $I,\,\Omega$ are called 
the {\em set of vertices}, the {\em set of arrows}
respectively, 
and each $h \in \Omega$ is viewed as an arrow 
drawn from the vertex $\vout(h)$ to the vertex $\vin(h)$.

A {\em representation} of the quiver $\QQ$ 
is a pair consisting of a collection of $\C$-vector spaces $V_i$ 
indexed by $i \in I$, 
and a collection of linear maps 
$x_h \colon V_{\vout(h)} \to V_{\vin(h)}$ indexed by $h \in \Omega$.
For two representations given by 
$V'_i, x'_h$ and $V_i, x_h$, 
a {\em morphism} from the former to the latter 
is a collection of linear maps $\psi_i \colon V'_i \to V_i$ 
indexed by $i \in I$, such that 
$\psi_{\vin(h)} \circ x'_h = x_h \circ \psi_{\vout(h)}$ 
for all $h \in \Omega$.
It is called an {\em isomorphism} if each $\psi_i$ is an isomorphism. 
If each $\psi_i$ is just an injection, 
the collection $(\range \psi_i)_{i \in I}$ 
together with the linear maps 
$\range \psi_{\vout(h)} \xrightarrow{x_h} \range \psi_{\vin(h)}$ 
induced from $x_h$ 
gives a representation of $\QQ$, 
which is called a {\em subrepresentation} of $(V_i, x_h)$.
Note that subspaces $X_i \subset V_i,\, i \in I$ give a subrepresentation 
of $(V_i,x_h)$ if and only if $x_h(X_{\vout(h)}) \subset X_{\vin(h)}$ 
for all $h \in \Omega$.

For a positive integer $r \in \Z_{>0}$, 
let $\QQ_r$ be the quiver 
with set of vertices 
$D \cup \{ \infty \}$ obtained by 
drawing $r$ arrows both from $t$ to $\infty$ and 
$\infty$ to $t$, 
and an edge-loop (i.e., an arrow $h$ with $\vin(h) =\vout(h)$) at $t$ 
for each $t \in D$.
Let $\QQ \equiv \QQ_1$ for simplicity.

Each quintuple $(V,W,T,Q,P)$ consisting of 
\begin{itm}
\item two finite-dimensional $\C$-vector spaces $V, W$;
\item an endomorphism $T$ of $W$ whose eigenvalues are all contained in $D$; and 
\item a point $(Q,P) \in \bM(V,W)$
\end{itm}
gives a representation of $\QQ$ 
in the following way:
\begin{enm}
\item[(a)] for each $t \in D$, 
assign the vertex $t$ with the vector space 
$W_t:=\Ker (T-t\,\unit_W)^{\dim W}$, 
and assign the vertex $\infty$ with $V$;
\item[(b)] assign the arrow from $t$ to $\infty$ 
with $Q_t$, the $\Hom(W_t,V)$-block component of $Q$; 
and similarly, 
\item[(c)] assign the arrow from $\infty$ to $t$ 
with $P_t \in \Hom(V,W_t)$;
\item[(d)] assign the loop at $t$ with $N_t:=(T-t\,\unit_W) |_{W_t}$.
\end{enm}
We call such a quintuple $(V,W,T,Q,P)$ satisfying further
the following condition just as a {\em datum}:
\begin{equation}\label{eq:T}
\prod_{t \in D} (T-t\,\unit_W)^{k_t} =0,
\end{equation}
where the zero datum $(0,0,0,0,0)$, which corresponds to 
the zero representation of $\QQ$, is understood to satisfy the above.

\begin{definition}
A {\em subrepresentation} of a datum $(V,W,T,Q,P)$ is a 
pair $(X,Y)$ of vector subspaces $X \subset V,\, Y \subset W$ 
satisfying that 
\[
P(X) \subset Y, \quad Q(Y) \subset X, \quad T(Y) \subset Y.
\]
\end{definition}

If $(X,Y)$ is a subrepresentation of a datum $(V,W,T,Q,P)$, 
then we can consider the restriction $(X,Y,T|_Y,Q|_Y,P|_X)$ 
of the datum to the subspaces $X$ and $Y$, 
which obviously gives a subrepresentation of the representation 
of $\QQ$ corresponding to $(V,W,T,Q,P)$, and vice versa.

Representations of $\QQ$ 
with prescribed vector spaces $V$ at $\infty$ and 
$W_t$ at $t \in D$ form a vector space
\[
\Rep_\QQ(V,\bW) := 
\bigoplus_{t \in D} \Bigl( \bM(V,W_t) \oplus \End(W_t) \Bigr), \quad 
\bW=(W_t)_{t \in D},
\]
and the group 
$\GL(V) \times \GL(\bW)$, where $\GL(\bW) := \prod_{t \in D} \GL(W_t)$, 
acts on it as isomorphisms of representations.

\begin{definition}
A nonzero datum $(V,W,T,Q,P)$ is said to be {\em irreducible} 
if it has no subrepresentations except $(X,Y)=(0,0),\, (V,W)$,
or equivalently, 
if the corresponding representation of $\QQ$ is irreducible.
\end{definition}

We also need the following condition.
\begin{definition}\label{dfn:stable}
A datum $(V,W,T,Q,P)$ with $V \neq 0$ is said to be {\em stable}
if for any subrepresentation $(X,Y)$ of it, 
the equalities $X=0, V$ imply $Y=0, W$ respectively.
\end{definition}

Note that 
under the assumption $V \neq 0$, 
the irreducibility condition implies the stability condition.

%It in fact coincides with King's stability condition~\cite{King} 
%for the $\GL(\bW)$-action on 
%$\Rep_\QQ(V,\bW)$ 
%with the trivial character,
%and the previous irreducibility corresponds to the stability condition 
%for the $\GL(V) \times \GL(\bW)$-action 
%with the trivial character.

\begin{remark}\label{rem:framing}
Let us fix $V \neq 0$ and consider the quiver $\QQ_{\dim V}$. 
Representations of $\QQ_{\dim V}$ with prescribed vector spaces 
$\C$ at $\infty$ and $W_t$ at $t \in D$,
form a vector space
\[
\Rep_{\QQ_{\dim V}}(\C,\bW)=
\bigoplus_{t \in D} 
\Bigl( \bM(\C,W_t)^{\oplus \dim V} \oplus \End(W_t) \Bigr).
\]
On the other hand, fixing a basis of $V$, we have identifications
\[
\Hom(W_t,V) \simeq \Hom(W_t,\C)^{\oplus \dim V}, 
\quad
\Hom(V,W_t) \simeq \Hom(\C,W_t)^{\oplus \dim V}.
\]
Thus we have an isomorphism
\[
\Rep_{\QQ}(V,\bW) \simeq 
\Rep_{\QQ_{\dim V}}(\C,\bW),
\]
which enables us to regard any datum with fixed $V$ 
as a representation of the quiver $\QQ_{\dim V}$, 
of which the vector space at the vertex $\infty$ is just $\C$.

Now fix a datum $(V,W,T,Q,P)$ and 
suppose that a subrepresentation of 
the corresponding representation of $\QQ_{\dim V}$ is given. 
Then one of the following two cases occurs:
\begin{enm}
\item[(a)] the vector space of it at the vertex $\infty$ is $0$;
\item[(b)] the vector space of it at the vertex $\infty$ is $\C$.
\end{enm}
In the first case, the subrepresentation gives 
a vector subspace $Y_t \subset W_t$ for each $t \in D$ such that
\[
Q_t(Y_t) =0, \quad N_t(Y_t) \subset Y_t,
\]
namely, the pair $(0,Y),\, Y=\bigoplus_{t \in D} Y_t$ is a subrepresentation 
of $(V,W,T,Q,P)$. 
In the second case, the subrepresentation gives 
a vector subspace $Y_t \subset W_t$ for each $t \in D$ such that
\[
P_t(V) \subset Y_t, \quad N_t(Y_t) \subset Y_t,
\]
namely, the pair $(V,Y),\, Y=\bigoplus_{t \in D} Y_t$ 
is a subrepresentation 
of $(V,W,T,Q,P)$.

This observation shows that when $V \neq 0$,
a datum $(V,W,T,Q,P)$ is stable if and only if the 
corresponding representation of $\QQ_{\dim V}$ is irreducible.
\end{remark}

\begin{lemma}\label{lem:stable-free}
Let $(Q_t,P_t,N_t)_{t \in D} \in \Rep_{\QQ}(V,\bW)$ 
be the point corresponding to some datum $(V,W,T,Q,P)$. 
If $V \neq 0$ and $(V,W,T,Q,P)$ is stable, then
the stabilizer at the point
$(Q_t,P_t,N_t)_{t \in D} \in \Rep_{\QQ}(V,\bW)$ 
of $\GL(\bW)$ is trivial.
\end{lemma}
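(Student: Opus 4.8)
The plan is to unwind the definition of the $\GL(\bW)$-action on $\Rep_\QQ(V,\bW)$ and exploit stability via the reformulation in Remark~\ref{rem:framing}. Suppose $b=(b_t)_{t\in D}\in\GL(\bW)$ fixes the point $(Q_t,P_t,N_t)_{t\in D}$. Spelling out what "fixes" means: for every $t\in D$ we have $b_t N_t b_t^{-1}=N_t$, $b_t P_t=P_t$, and $Q_t b_t^{-1}=Q_t$, i.e.\ $Q_t b_t=Q_t$. The goal is to conclude that each $b_t$ is the identity on $W_t$, equivalently that $b_t-\unit_{W_t}=0$.

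First I would introduce, for each $t$, the subspace $Y_t:=\range(b_t-\unit_{W_t})\subset W_t$ and the subspace $Z_t:=\Ker(b_t-\unit_{W_t})\subset W_t$; set $Y:=\bigoplus_t Y_t$ and $Z:=\bigoplus_t Z_t$. From $b_tP_t=P_t$ we get $(b_t-\unit_{W_t})P_t=0$, so $P_t(V)\subset Z_t$, i.e.\ $P(V)\subset Z$. From $Q_tb_t=Q_t$ we get $Q_t(b_t-\unit_{W_t})=0$, so $Q_t(Y_t)=0$, i.e.\ $Q(Y)=0$. From $b_tN_t=N_tb_t$ (equivalently $b_tN_tb_t^{-1}=N_t$) one checks that $N_t$ commutes with $b_t-\unit_{W_t}$, hence $N_t(Y_t)\subset Y_t$ and $N_t(Z_t)\subset Z_t$. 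Since $T|_{W_t}=t\,\unit_{W_t}+N_t$, both $Y$ and $Z$ are $T$-invariant. Therefore $(0,Y)$ is a subrepresentation of the datum and $(V,Z)$ is a subrepresentation of the datum. (Alternatively, one can phrase this directly through Remark~\ref{rem:framing}: $(0,Y)$ is a subrepresentation of the corresponding $\QQ_{\dim V}$-representation of type (a), and $(V,Z)$ one of type (b).)

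Now I would invoke stability (Definition~\ref{dfn:stable}). The subrepresentation $(0,Y)$ has $X=0$, so stability forces $Y=0$; the subrepresentation $(V,Z)$ has $X=V$, so stability forces $Z=W$. But $Y=\range(b-\unit_W)=0$ already gives $b=\unit_W$, so the stabilizer is trivial. (The two conclusions are consistent — indeed either one alone suffices — and I would state only the one needed, e.g.\ $Y=0$.)

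I do not expect a serious obstacle here; the only point requiring a little care is the verification that $b_t$ commuting with $N_t$ implies $b_t-\unit_{W_t}$ commutes with $N_t$ and hence that $Y_t$ and $Z_t$ are $N_t$-stable — this is immediate since $\unit_{W_t}$ is central. A second minor point is making sure the three subrepresentation conditions $P(X)\subset Y$, $Q(Y)\subset X$, $T(Y)\subset Y$ are each checked for both $(0,Y)$ and $(V,Z)$; for $(0,Y)$ the condition $P(0)\subset Y$ is vacuous and only $Q(Y)=0$ and $T(Y)\subset Y$ must be checked, while for $(V,Z)$ the condition $Q(Z)\subset V$ is vacuous and only $P(V)\subset Z$ and $T(Z)\subset Z$ must be checked, so in fact we use exactly the relations derived above. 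The hypothesis $V\neq 0$ is used implicitly in that stability is only defined in that case.
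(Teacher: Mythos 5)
Your proof is correct and follows essentially the same route as the paper: the paper's argument is exactly your $(V,Z)$ half, setting $Y_t:=\Ker(b_t-1)$, checking $\range P_t\subset Y_t$ and $N_t(Y_t)\subset Y_t$, and invoking stability with $X=V$ to get $Y=W$, hence $b=1$. Your additional argument via $\range(b_t-\unit_{W_t})$ and the subrepresentation $(0,Y)$ is a valid but redundant second way to conclude.
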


\begin{proof}
Suppose that 
$b=(b_t) \in \GL(\bW)$ stabilizes 
$(Q_t,P_t,N_t)_{t \in D}$, namely,
\[
Q_t b_t^{-1} = Q_t, \quad 
b_t P_t =P_t, \quad
b_t N_t b_t^{-1} = N_t.
\] 
Set $Y_t:=\Ker (b_t -1) \subset W_t$ and $Y := \bigoplus Y_t$.
Then the above implies that
\[
N_t (Y_t) \subset Y_t, 
\quad
\range P_t \subset Y_t.
\]
Thus $(V,Y)$ is a subrepresentation 
of $(V,W,T,Q,P)$. 
By the stability condition we must have $Y=W$ and hence $b=1$.
\end{proof}

\begin{lemma}\label{lem:polystable}
Suppose that $V \neq 0$ and
let $(Q_t,P_t,N_t)_{t \in D} \in \Rep_{\QQ}(V,\bW)$ 
be the point corresponding to some datum $(V,W,T,Q,P)$. 
Then the $\GL(\bW)$-orbit of it is closed 
if and only if there exists a direct sum decomposition
\[
W = W(0) \oplus W(1)^{\oplus m_1} \oplus \cdots \oplus W(N)^{\oplus m_N}
\]
compatible with the decomposition $W=\bigoplus_t W_t$ such that:
\begin{enm}
\item[{\rm (a)}] $Q(W(i)^{\oplus m_i}) =0$ for all $i \geq 1$ and $\range P \subset W(0)$;
\item[{\rm (b)}] $T$ preserves each direct summand;
\item[{\rm (c)}] for any $ i\geq 1$ there exists $T_i \in \End(W(i))$ such that
$T |_{W(i)^{\oplus m_i}} = \unit_{\C^{m_i}} \otimes T_i$; 
\item[{\rm (d)}] the datum $(V,W(0),T |_{W(0)}, Q,P)$ given by 
restricting $(V,W,T,Q,P)$ to $W(0)$ is stable;
\item[{\rm (e)}] the datum $(0,W(i),T_i, 0,0)$ is irreducible 
for any $i \geq 1$.
\end{enm}
Moreover such a direct decomposition is unique up to 
permutation on $\{ 1, \dots ,N \}$.
\end{lemma}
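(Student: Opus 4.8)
The plan is to recognize Lemma~\ref{lem:polystable} as an instance of the standard GIT characterization of closed orbits: for a reductive group $G$ acting linearly on a vector space, a $G$-orbit is closed if and only if the corresponding representation is semisimple (i.e.\ polystable), and the multiplicity decomposition into simples is then unique up to reordering. Here $G=\GL(\bW)$ acts on $\Rep_\QQ(V,\bW)$. The subtlety, and the reason the statement looks asymmetric in the roles of $W(0)$ and the $W(i)$, is that $\GL(\bW)$ does \emph{not} act on the $V$-component: we are looking at representations of $\QQ_{\dim V}$ with the vertex $\infty$ rigidly fixed to $\C$ (Remark~\ref{rem:framing}), so the ``simple objects'' come in two flavours—those supported away from $\infty$ (the irreducible data $(0,W(i),T_i,0,0)$) and the single piece that still sees $\infty$ (the stable datum on $W(0)$). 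So I would first pass, via Remark~\ref{rem:framing}, to the framed picture and phrase everything in terms of King's criterion for semistability of framed quiver representations (King, \emph{Moduli of representations of finite-dimensional algebras}), where ``closed orbit $\Leftrightarrow$ polystable'' is exactly the statement I need; stability of the $W(0)$-datum is, by Remark~\ref{rem:framing}, irreducibility of its framed representation, and irreducibility of $(0,W(i),T_i,0,0)$ is genuine irreducibility as a representation of $\QQ$ with $0$ at $\infty$.

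For the ``if'' direction I would argue directly rather than invoke the black box. Given a decomposition as in (a)--(e), one checks that the $\GL(\bW)$-orbit is closed by the Hilbert--Mumford/one-parameter-subgroup test: suppose $\lim_{\mu\to 0}\rho(\mu)\cdot x$ exists for a one-parameter subgroup $\rho$ of $\GL(\bW)$; decompose $\rho$ according to the weight spaces and use that each summand $W(i)^{\oplus m_i}$ ($i\ge 1$) is killed by $Q$ with no image of $P$, while the $W(0)$-part carries a stable datum so the only one-parameter subgroups not degenerating it are those acting by scalars—hence the limit lies in the same orbit. Concretely: the associated graded of any filtration of the representation by subrepresentations is isomorphic to the representation itself, because the only subrepresentations compatible with the decomposition are sums of whole isotypic blocks together with sub-data of the $W(0)$-piece, and on the latter stability forbids proper ones with $V$-component equal to $V$ and the $\infty$-component-$0$ ones are forced to $0$ as well once one remembers $\range P\subset W(0)$. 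This is the routine part.

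The ``only if'' direction is where the real work is, and it is the step I expect to be the main obstacle. Assume the orbit is closed. By Matsushima/Luna-type reasoning (or again King) the stabilizer is reductive and the representation is polystable; concretely I would run the following iteration. If $(V,W,T,Q,P)$ is stable, take $W(0)=W$ and we are done. Otherwise there is a subrepresentation $(X,Y)$ violating stability. Using Remark~\ref{rem:framing}, the offending subrepresentation can be arranged to have $\infty$-component either $0$ or $\C$; in the second case $(V,Y)$ is a proper subrepresentation with the full $V$, and one shows—using closedness of the orbit via a one-parameter subgroup that scales a complement of $Y$—that $Y$ is in fact a direct summand as a representation, so we may split off a complement supported away from $\infty$ and recurse on it; in the first case $(0,Y)$ is a nonzero subrepresentation with $Y\subsetneq W$, and closedness again forces $Y$ to be a direct summand. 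Peeling off summands supported at $\infty$ until the remaining $\infty$-seeing piece is stable produces $W(0)$; the leftover piece is a representation of $\QQ$ with $0$ at $\infty$, carrying only $T$, whose orbit closure is inherited as closed, hence it is semisimple as such a representation and decomposes into irreducible $(0,W(i),T_i,0,0)$ with multiplicities, giving (a)--(e). The delicate points to get right are (i) that ``closed $\GL(\bW)$-orbit'' really does force the splitting and not merely the existence of a filtration—this is the content of the polystability theorem and must be cited or re-derived via the Hilbert--Mumford criterion applied to the one-parameter subgroup $t\mapsto (\,t\,\unit \text{ on a chosen complement},\ \unit \text{ on }Y\,)$, whose limit exists and lands in the closure; and (ii) the condition (c), namely that the isotypic component $W(i)^{\oplus m_i}$ carries $T$ in the block-scalar form $\unit_{\C^{m_i}}\otimes T_i$—this follows because an irreducible representation of $\QQ$ supported at a single vertex with only a loop is just a pair (space, indecomposable-nilpotent-plus-scalar operator), its endomorphism algebra is local, Schur's lemma pins the isotypic structure, and the loop operator on the isotypic block is forced into the stated form by compatibility with all the isomorphisms between copies.

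Uniqueness up to permutation of $\{1,\dots,N\}$ is then Krull--Schmidt together with the observation that $W(0)$ is intrinsic: it is the smallest direct summand (compatible with $W=\bigoplus_t W_t$ and with $T$) through which $P$ factors and which surjects onto $\range Q$-relevant data, equivalently the sum of all indecomposable summands whose $\infty$-component is nonzero in the framed picture. I would state this as a short paragraph at the end, citing Krull--Schmidt for the category of representations of $\QQ_{\dim V}$ (finite-dimensional, hence Krull--Schmidt), so that the multiplicities $m_i$ and the isomorphism classes of $(W(i),T_i)$ are uniquely determined.
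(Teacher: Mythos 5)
Your proposal is correct and follows essentially the same route as the paper: pass to the framed quiver $\QQ_{\dim V}$ via Remark~\ref{rem:framing}, invoke the standard fact that the $\GL(\bW)$-orbit is closed iff the representation is semisimple, identify the unique irreducible summand with nonzero space at $\infty$ (necessarily of multiplicity one, since that space is $\C$) as the stable $W(0)$-piece and the remaining isotypic blocks as the $W(i)^{\oplus m_i}$, and get uniqueness from the uniqueness of the semisimple decomposition. The only cosmetic differences are that the paper cites Le~Bruyn--Procesi \cite{LP} for the closed-orbit/semisimplicity equivalence rather than King, and does not re-derive it via the Hilbert--Mumford-type arguments you optionally sketch.
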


\begin{proof}
Let $x \in \Rep_{\QQ_{\dim V}}(\C,\bW)$ 
be the point corresponding to $(V,W,T,Q,P)$ 
under the isomorphism given in Remark~\ref{rem:framing}. 
It is well-known~\cite{LP} that 
the orbit $\GL(\bW) \cdot x$ is closed if and only if 
$x$ is semisimple as a representation of $\QQ_{\dim V}$, 
namely there exists a direct sum decomposition
\[
x = x(0)^{\oplus m_0} \oplus x(1)^{\oplus m_1} 
\oplus \cdots \oplus x(N)^{\oplus m_N}
\]
by irreducible representations $x(i)$, 
and furthermore such a decomposition is unique up to 
permutation on $\{ 0,1, \dots , N \}$.

Since the vector space at the vertex $\infty$ 
of the representation $x$ is just $\C$,
we may assume that the vector space at $\infty$ of $x(0)$ is nonzero 
and those of all the other $x(i)$ are zero. 
Then we must have $m_0=1$ and the vector space at $\infty$ of $x(0)$ 
is just $\C$.
Now for each $i \geq 0$, 
the vector space $W(i)$ is given by the direct sum of those of $x(i)$ 
among all the vertices contained in $D$.
\end{proof}

\begin{lemma}\label{lem:stability}
A datum $(V,W,T,Q,P)$ with $V \neq 0$ is stable 
if and only if the following two conditions hold for any $t \in D$:
\begin{enm}
\item[{\rm (a)}] $\Ker Q_t \cap \Ker N_t =0$;
\item[{\rm (b)}] $\range P_t + \range N_t =W_t$.
\end{enm}
\end{lemma}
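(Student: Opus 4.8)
The plan is to characterize stability of a datum $(V,W,T,Q,P)$ with $V\neq0$ by working vertex-by-vertex, translating the two conditions (a) and (b) of the lemma into statements about subrepresentations. Since $W=\bigoplus_{t\in D}W_t$ and the arrows of $\QQ$ between distinct vertices of $D$ are absent (each $t\in D$ is only connected to $\infty$, plus a loop $N_t$ at $t$), a pair $(X,Y)$ with $Y=\bigoplus_t Y_t$ is a subrepresentation precisely when $P_t(X)\subset Y_t$, $Q_t(Y_t)\subset X$, and $N_t(Y_t)\subset Y_t$ for every $t$. This ``decoupling'' over $D$ is what makes the local conditions (a) and (b) the right ones, and I would state it explicitly as the first step.

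For the ``only if'' direction: assume $(V,W,T,Q,P)$ is stable. Fix $t\in D$. To get (a), set $Y_t:=\Ker Q_t\cap\Ker N_t$ and $Y:=Y_t\subset W$ (all other $Y_{t'}=0$). Then $N_t(Y_t)\subset Y_t$ trivially, $Q_t(Y_t)=0\subset 0$, and $P_t(0)\subset Y_t$, so $(0,Y)$ is a subrepresentation with $X=0$; stability forces $Y=0$, i.e.\ $\Ker Q_t\cap\Ker N_t=0$. To get (b), set $Y_t:=\range P_t+\range N_t$, which is $N_t$-invariant and contains $\range P_t$; take $Y_{t'}=W_{t'}$ for $t'\neq t$ and $Y=\bigoplus Y_{t'}$. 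Since also $Q(Y)\subset V$ trivially and $P(V)\subset Y$, the pair $(V,Y)$ is a subrepresentation with $X=V$; stability forces $Y=W$, hence $Y_t=W_t$, which is (b). (A small point to check: for the loop at $t'\neq t$ we need $N_{t'}(W_{t'})\subset W_{t'}$, which is automatic.)

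For the ``if'' direction: assume (a) and (b) hold for all $t$, and let $(X,Y)$ be any subrepresentation, $Y=\bigoplus_t Y_t$. Suppose first $X=0$. Then $Q_t(Y_t)=0$ and $N_t(Y_t)\subset Y_t$, so $Y_t\subset\Ker Q_t$ is an $N_t$-invariant subspace; I claim $Y_t=0$. Indeed, if $Y_t\neq0$ then, since $N_t$ is nilpotent, $Y_t\cap\Ker N_t\neq0$ (a nonzero nilpotent-invariant subspace always meets the kernel), so $Y_t\cap\Ker N_t\cap\Ker Q_t\neq0$, contradicting (a). Hence $Y=0$. Next suppose $X=V$. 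Then $\range P_t\subset Y_t$ and $N_t(Y_t)\subset Y_t$, so $Y_t\supset\range P_t$ is $N_t$-invariant, hence $Y_t\supset\range P_t+\range N_t=W_t$ by (b), so $Y=W$. This verifies the stability condition.

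The only genuinely non-routine point is the nilpotent linear-algebra fact used in the ``if'' direction — that a nonzero $N_t$-invariant subspace of $W_t$ has nonzero intersection with $\Ker N_t$, equivalently $\Ker N_t$ meets every nonzero invariant subspace — together with the dual fact that an $N_t$-invariant subspace containing $\range N_t$ is proper only if it misses some ``top'' of a Jordan block, giving $\range P_t+\range N_t$ invariant and the reverse step in (b). These are elementary once one recalls $N_t^{k_t}=0$ with $k_t$ minimal, but I would spell out the first of them since it is the crux of the argument; I expect that to be the main (and only) obstacle, and it is mild.
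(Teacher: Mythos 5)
Your ``only if'' direction and the $X=0$ half of the ``if'' direction are correct and essentially identical to the paper's proof (the paper simply reduces to $D=\{0\}$ first, which is the same decoupling you make explicit). The $X=V$ half, however, has a genuine gap: from ``$\range P_t\subset Y_t$ and $N_t(Y_t)\subset Y_t$'' you conclude ``hence $Y_t\supset\range P_t+\range N_t=W_t$ by (b)''. That inference is invalid: $N_t$-invariance means $N_t(Y_t)\subset Y_t$, not $\range N_t\subset Y_t$ (for instance $Y_t=\Ker N_t$ is $N_t$-invariant but, for a Jordan block of size $\geq 3$, does not contain $\range N_t$), so condition (b) only tells you that $Y_t+\range N_t=W_t$, and it remains to deduce $Y_t=W_t$ from this together with invariance. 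The ``dual fact'' you gesture at in your last paragraph --- about invariant subspaces \emph{containing} $\range N_t$ --- is not the statement that is needed here, so it does not close the step; this is precisely the point you chose not to spell out, and it is where the argument currently fails.

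The missing step is short, and it is how the paper finishes: if $Y_t\neq W_t$, then $N_t$ induces (by invariance) a nilpotent endomorphism of the nonzero space $W_t/Y_t$, which therefore cannot be surjective; hence $Y_t+\range N_t\neq W_t$, and since $\range P_t+\range N_t\subset Y_t+\range N_t$ this contradicts (b). Equivalently, iterate the identity $W_t=Y_t+\range N_t$ using $N_t(Y_t)\subset Y_t$ to get $W_t=Y_t+N_t^j(W_t)$ for all $j$, and take $j=k_t$ to conclude $Y_t=W_t$. With this one-line argument inserted, your proof is complete and coincides with the paper's.
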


\begin{proof}
Without loss of generality we may assume $D=\{ 0 \}$, 
and hence $T \equiv N$ is nilpotent. 
We drop the subscript $t=0$ in what follows.

The `only if' part is obvious because $(0,\Ker Q \cap \Ker N)$ and 
$(V,\range P + \range N)$ are both subrepresentations. 
We show the `if' part. 
Suppose that 
an $N$-invariant subspace $Y \subset W$ is contained in $\Ker Q$ 
(i.e.\ $(0,Y)$ is a subrepresentation). 
If $Y$ is nonzero, then we can take a nonzero vector $w \in Y$ 
satisfying $Nw=0$ because $N$ is nilpotent. 
Thus $w \in \Ker N \cap Y \subset \Ker N \cap \Ker Q$, 
which is a contradiction. Hence $Y=0$.
Next suppose that 
an $N$-invariant subspace $Y \subset W$ contains $\range P$ 
(i.e.\ $(V,Y)$ is a subrepresentation).
If $Y \neq W$, then consider 
the endomorphism $N_Y$ on $W/Y$ induced from $N$.
Since $N_Y$ is nilpotent and $W/Y$ is nonzero, 
$\Coker N_Y$ is nonzero. 
Thus we can take a vector $w \in W$ such that 
$w \notin Y + \range N \supset \range P + \range N$, 
which is a contradiction. Hence $Y=W$.
\end{proof}

\section{Properties of the map $\Phi_T$}\label{sec:property}

\begin{proposition}\label{prop:unique}
Suppose that two data $(V,W,T,Q,P)$ and $(V,W',T',Q',P')$ 
with the same $V \neq 0$ are both stable and 
$\Phi_T(Q,P)=\Phi_{T'}(Q',P')$. 
Then there exists an isomorphism $f \colon W \to W'$ such that
\[
Q' = Q f^{-1}, \quad P' = f P, \quad T' = f T f^{-1}.
\]
\end{proposition}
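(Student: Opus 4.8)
The plan is to exploit the geometric invariant theory set-up from Section~\ref{sec:quiver}: under the isomorphism of Remark~\ref{rem:framing}, a stable datum with fixed $V$ corresponds to an irreducible representation of $\QQ_{\dim V}$, and two such representations are isomorphic as $\QQ_{\dim V}$-representations precisely when they are related by an element of $\GL(\bW)$ fixing the identity at the vertex $\infty$ — that is, by an isomorphism $f\colon W\to W'$ compatible with $W=\bigoplus W_t$, $W'=\bigoplus W'_t$. So the crux is to show that $\Phi_T(Q,P)=\Phi_{T'}(Q',P')$ already forces the two data to be isomorphic as representations of $\QQ_{\dim V}$. Note we may reduce to the case $D=\{0\}$ (treating one $t\in D$ at a time, since $\Phi_T$ decomposes as a sum over $t$ with disjoint pole loci, and stability is checked $t$-by-$t$ by Lemma~\ref{lem:stability}), so that $T=N$, $T'=N'$ are nilpotent and the equality reads
\[
\sum_{k=1}^{k} \frac{Q N^{k-1} P}{z^k} = \sum_{k=1}^{k'} \frac{Q' (N')^{k-1} P'}{z^k},
\]
i.e. $QN^{k-1}P = Q'(N')^{k-1}P'$ for all $k\geq 0$.

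First I would build the candidate isomorphism. Consider the two linear maps
\[
\Theta\colon W \to \bigoplus_{j\geq 0}\Hom(V_{?},\ldots)
\]
— more precisely, the maps $w\mapsto (QN^{j}w)_{j\geq 0}$ from $W$ to $\bigoplus_{j\geq 0}\range(Q)$, and dually the maps $V\to W$ given by $v\mapsto (N^{j}Pv)_{j}$. Stability (Lemma~\ref{lem:stability}) says exactly that $\bigcap_j \Ker(QN^j)=0$ and $\sum_j \range(N^jP)=W$; the first of these makes $w\mapsto(QN^jw)_j$ injective, the second makes $v\mapsto(N^jPv)_j$ have image spanning $W$ in an appropriate sense. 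The standard trick here is to define $f$ on the image of $P$ and its $N$-translates by
\[
f\bigl(N^{j}P v\bigr) := (N')^{j}P' v,
\]
then check this is well-defined using the relations $QN^{a}P=Q'(N')^{a}P'$ together with the injectivity coming from stability on the $(W,N)$-side, and that it extends to all of $W$ using surjectivity $\sum_j\range(N^jP)=W$. Symmetrically one defines a map $g\colon W'\to W$ the same way, and checks $fg$, $gf$ are the respective identities; by construction $f N = N' f$ and $fP=P'$, and $Qf^{-1}=Q'$ follows from $QN^j = Q'(N')^j f$ on generators. Reassembling over $t\in D$ gives the compatibility with $W=\bigoplus W_t$ and $T'=fTf^{-1}$.

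The main obstacle is the well-definedness of $f$ on $W$: a priori $N^jPv$ and $N^{j'}Pv'$ could coincide without $v=v'$ or $j=j'$, so one must show that whenever $\sum_i N^{j_i}Pv_i = 0$ in $W$ one also has $\sum_i (N')^{j_i}P'v_i=0$ in $W'$. The way to handle this is not to argue on individual elements but to produce $f$ and $g$ simultaneously and symmetrically, using the two equalities $QN^aP=Q'(N')^aP'$ for \emph{all} $a$ to see that any relation among the $N^{j}Pv$ detected by applying arbitrary $QN^a$ is mirrored on the primed side, and then to invoke stability: condition (a) of Lemma~\ref{lem:stability} on the $(W,N)$-side guarantees that an element of $W$ is zero iff it is killed by all $QN^a$, which bootstraps the relation across. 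I expect that once the bookkeeping of which $N^aP$'s generate $W$ is set up cleanly (e.g. picking a finite generating set and arguing by descending induction on the nilpotency filtration of $N$), the verification that $f$ is a bijective morphism of quiver representations, hence the desired isomorphism, is routine; alternatively one can bypass the elementwise argument entirely by appealing to Lemma~\ref{lem:polystable} — stability means both orbits are closed \emph{and} free (Lemma~\ref{lem:stable-free}), and two closed free orbits whose points have the same moment-map-type invariant $\Phi$ must coincide once one shows $\Phi$ separates them, which is really the content being proved.
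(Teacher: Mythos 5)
Your argument is essentially correct, but it follows a genuinely different route from the paper. You construct the isomorphism by hand: reduce to $D=\{0\}$ (legitimate, since the principal parts at distinct poles can be compared separately and Lemma~\ref{lem:stability} checks stability $t$-by-$t$), note that stability gives $\sum_j \range (N^jP)=W$ (condition (b) plus nilpotency) and $\bigcap_j \Ker(QN^j)=0$ (condition (a)), define $f(N^jPv):=(N')^jP'v$ on the spanning set, and verify well-definedness by probing a relation $\sum_i N^{j_i}Pv_i=0$ with all $QN^a$ and transferring via $QN^aP=Q'(N')^aP'$ — one small correction: the stability you invoke at that point is condition (a) for the \emph{primed} datum $(W',N')$, since it is the element $\sum_i(N')^{j_i}P'v_i$ of $W'$ that must be shown to vanish; by symmetry the same principle on the unprimed side gives the inverse $g$, so this is a slip of labelling rather than a gap. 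The paper instead stays inside the GIT framework of Section~\ref{sec:quiver}: it embeds both data into the common space $\widehat W_t=W_t\oplus W'_t$, observes that the hypothesis $\Phi_T(Q,P)=\Phi_{T'}(Q',P')$ (together with nilpotency of the loops) forces all generators of the invariant ring of $\Rep_\QQ(V,\widehat{\bW})$ — cited from Lusztig/Le~Bruyn--Procesi — to agree at the two points, so their orbit closures meet; since both orbits are closed by Lemma~\ref{lem:polystable}, the representations are isomorphic, and the uniqueness of the decomposition in that lemma extracts the isomorphism $f$ on the stable summands. Your direct construction is more elementary and self-contained (no invariant-theory input, no closed-orbit argument) and automatically handles $\dim W\neq\dim W'$; the paper's proof is shorter given the machinery already set up and keeps everything in the language reused elsewhere in Sections 3--4. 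Your closing ``alternative'' via Lemma~\ref{lem:polystable} alone is, as you yourself note, circular as stated (and the naive ``same orbit'' formulation does not even parse when $W\not\simeq W'$); making it rigorous requires exactly the paper's two extra ingredients, the doubling $W_t\oplus W'_t$ and the generation of invariants, so you were right not to rely on it.
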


\begin{proof}
For each $t \in D$, 
set $\widehat{W}_t := W_t \oplus W'_t$ and define 
\begin{align*}
(\widehat{Q}_t,\widehat{P}_t) &:= (Q_t,P_t) \oplus (0,0) 
\in \bM(V,\widehat{W}_t)=\bM(V,W_t) \oplus \bM(V,W'_t), \\
(\widehat{Q}'_t,\widehat{P}'_t) &:= (0,0) \oplus (Q'_t,P'_t) 
\in \bM(V,\widehat{W}_t), 
\end{align*}
and
\[
\widehat{N}_t := 
\left(\, \begin{matrix} N_t & 0 \\ 0 & 0 \end{matrix} \,\right), 
\quad
\widehat{N}'_t:= 
\left(\, \begin{matrix} 0 & 0 \\ 0 & N'_t \end{matrix} \,\right)
\; \in \End(\widehat{W}_t).
\]
Then $(\widehat{Q}_t,\widehat{P}_t,\widehat{N}_t)_{t \in D}$ 
and $(\widehat{Q}'_t,\widehat{P}'_t,\widehat{N}'_t)_{t \in D}$
give points $x, x'$ in $\Rep_\QQ(V,\widehat{\bW})$
respectively, 
where $\widehat{\bW}=(\widehat{W}_t)_{t \in D}$.

It is known~\cite[Theorem 1.3]{Lus-qvar} that the ring 
$\C[\Rep_{\QQ}(V,\widehat{\bW})]^{\GL(\widehat{\bW})}$ 
of $\GL(\widehat{\bW})$-invari-ant regular functions on 
$\Rep_{\QQ}(V,\widehat{\bW})$ 
is generated by the functions
\begin{align*}
x=(\widehat{Q}_t,\widehat{P}_t,\widehat{N}_t)_{t \in D} 
&\longmapsto \chi (\widehat{Q}_s \widehat{N}_s^k \widehat{P}_s), 
\quad \chi \in \End(V)^*, k \geq 0, s \in D, \\
x=(\widehat{Q}_t,\widehat{P}_t,\widehat{N}_t)_{t \in D} 
&\longmapsto \tr \widehat{N}_s^k, \qquad k \geq 1, s \in D.
\end{align*}
Since $\widehat{N}_t,\, \widehat{N}'_t$ are nilpotent, 
the hypothesis $\Phi_T(Q,P)=\Phi_{T'}(Q',P')$ implies that 
the two points $x$ and $x'$ can not be 
distinguished by $\GL(\widehat{\bW})$-invariant functions,
in other words,
\[
\ov{\GL(\widehat{\bW}) \cdot x}\, \cap \, 
\ov{\GL(\widehat{\bW}) \cdot x'} \neq \emptyset.
\]
On the other hand, by the construction and Lemma~\ref{lem:polystable},
the above two orbits are closed. Thus $x$ and $x'$ must be isomorphic
as representations of $\QQ_{\dim V}$.
The uniqueness of the decomposition in Lemma~\ref{lem:polystable} 
implies the result.
\end{proof}

\begin{lemma}\label{lem:existence}
For any system $A(z) \in \EE_\vek (V)$, 
there exists a datum $(V,W,T,Q,P)$ satisfying $\Phi_T(Q,P) =A(z)$.
\end{lemma}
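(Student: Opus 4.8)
The plan is to prove the lemma by an explicit construction, first reducing to the case of a single pole. The computation carried out just before Theorem~\ref{thm:AHHP} shows that for a datum with generalized eigenspace decomposition $W=\bigoplus_{t\in D}W_t$ one has $\Phi_T(Q,P)=\sum_{t\in D}\sum_{k=1}^{k_t}(z-t)^{-k}\,Q_tN_t^{k-1}P_t$, where $N_t=T|_{W_t}-t\,\unit_{W_t}$ and $(Q_t,P_t)$ is the $\bM(V,W_t)$-block of $(Q,P)$. So it is enough to construct, for each $t\in D$ separately, a finite-dimensional space $W_t$, a nilpotent $N_t\in\End(W_t)$ with $N_t^{k_t}=0$, and maps $Q_t\colon W_t\to V$, $P_t\colon V\to W_t$ with $Q_tN_t^{j-1}P_t=A_{t,j}$ for $1\le j\le k_t$; then set $W:=\bigoplus_{t\in D}W_t$, let $T$ act on $W_t$ as $t\,\unit_{W_t}+N_t$, and let $Q,P$ be the maps with block components $Q_t,P_t$. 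We may assume $V\neq 0$, since for $V=0$ the zero datum does the job.

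For fixed $t\in D$, write $k=k_t$ and take $W_t:=V\otimes_\C\C^k$ with $e_1,\dots,e_k$ the standard basis vectors of $\C^k$. I would set $N_t:=\unit_V\otimes n$, where $n(e_i)=e_{i+1}$ for $i<k$ and $n(e_k)=0$, and define $P_t(v):=v\otimes e_1$ and $Q_t(v\otimes e_i):=A_{t,i}\,v$. Then $N_t^{j-1}P_t(v)=v\otimes e_j$ for $1\le j\le k$, whence $Q_tN_t^{j-1}P_t=A_{t,j}$, and $N_t^k=0$ while $N_t^{k-1}\neq 0$ because $V\neq 0$; thus the nilpotency index of $N_t$ is exactly $k_t$, which matches the globally fixed collection $\vek$ defining $\EE_\vek(V)$.

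With $W,T,Q,P$ assembled as above, $(T-t\,\unit_W)^{k_t}$ annihilates $W_t$ for each $t$, so $\prod_{t\in D}(T-t\,\unit_W)^{k_t}=0$ and condition \eqref{eq:T} holds; hence $(V,W,T,Q,P)$ is a datum, and its generalized eigenspace decomposition is the given $W=\bigoplus W_t$. Plugging into the formula for $\Phi_T$ recalled above gives $\Phi_T(Q,P)=\sum_{t\in D}\sum_{k=1}^{k_t}(z-t)^{-k}A_{t,k}=A(z)$, as required. There is no serious obstacle: the statement is a soft existence result and the construction is direct. The only points needing a little care are orienting the nilpotent shift $n$ correctly so that $Q_tN_t^{j-1}P_t$ extracts $A_{t,j}$ (rather than $A_{t,k_t-j+1}$ or $0$), and verifying that the resulting $T$ satisfies the datum condition \eqref{eq:T} so that $\Phi_T$ is genuinely defined and lands in $\EE_\vek(V)$.
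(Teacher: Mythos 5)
Your construction is correct and is essentially the paper's own proof: the paper takes $\widehat{W}_t=V^{\oplus k_t}$ with the block Jordan nilpotent $\widehat{N}_t$, $\widehat{P}_t$ the identity into one slot, and $\widehat{Q}_t=(A_{t,k_t}\;\cdots\;A_{t,1})$, which is your datum up to reversing the order of the basis $e_1,\dots,e_{k_t}$, and then forms the direct sum over $t\in D$ exactly as you do. The extra observations (exact nilpotency index, condition \eqref{eq:T}) are fine but not needed beyond what the paper states.
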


\begin{proof}
Set $\widehat{W}_t := V^{\oplus k_t}$ and 
\begin{gather*}
\widehat{Q}_t := \begin{pmatrix} A_{t,k_t} & A_{t,k_t-1} & \cdots & A_{t,1}
        \end{pmatrix} \in \Hom(\widehat{W}_t,V), \\ 
\widehat{P}_t := \left(\, \begin{matrix} 0 \\ \vdots \\ 0 \\ \unit_V
       \end{matrix} \,\right) \in \Hom(V,\widehat{W}_t), \quad
\widehat{N}_t := \left(\,\begin{matrix}
                        0     & \ \unit_V \  &        & \ 0 \\
                              & 0            & \ddots &   \\
                              &              & \ddots & \ \unit_V   \\
                        0     &              &        & \ 0 
\end{matrix} \,\right)
\in \End(\widehat{W}_t).
\end{gather*}
Then one can easily check
\[
\widehat{Q}_t \widehat{N}_t^k \widehat{P}_t = A_{t,k+1},\; k=0,1,\dots ,k_t-1,
\quad \widehat{N}_t^{k_t} =0.
\]
Thus setting 
\[
\widehat{W}:=\bigoplus \widehat{W}_t, \quad 
\widehat{T} := \bigoplus \bigl( t\,\unit_{W_t} + \widehat{N}_t\bigr) 
\in \End(W),
\]
and $(\widehat{Q},\widehat{P})
:=\bigoplus (\widehat{Q}_t,\widehat{P}_t) \in \bM(V,\widehat{W})$, 
we obtain the result.
\end{proof}

\begin{remark}\label{rem:jet}
The datum defined above has the following meaning.
The identification $\C^{k_t} \simeq S_{k_t}$ given by the basis 
$\{\, z^{k_t-1}, z^{k_t-2}, \dots ,1 \,\}$ induces isomorphisms
\begin{gather*}
\widehat{W}_t \simeq \C^{k_t} \otimes_\C V 
\simeq S_{k_t} \otimes_\C V, \\
\Hom_\C (V,\widehat{W}_t) 
\simeq V^* \otimes_\C V \otimes_\C S_{k_t}
\simeq \End_\C(V) \otimes_\C S_{k_t},\\
\Hom_\C (\widehat{W}_t,V) 
\simeq \End_\C(V) \otimes_\C S_{k_t}^* 
\simeq \End_\C(V) \otimes_\C S^{k_t},\\
\End_\C (\widehat{W}_t) \simeq \End_\C(V) 
\otimes_\C \End_\C (S_{k_t}).
\end{gather*}
Under these identifications, we can write 
\begin{align*}
\widehat{Q}_t &= \sum_{k=1}^{k_t} A_{t,k} \otimes_\C z^{-k} 
\in \Hom_\C (\widehat{W}_t,V), \\
\widehat{P}_t &= \unit_V \otimes_\C 1 
\in \Hom_\C (V,\widehat{W}_t), \\
\widehat{N}_t &= \unit_V \otimes_\C z\,\unit_{S_{k_t}} 
\in \End_\C(\widehat{W}_t).
\end{align*}
The above description was also used by Woodhouse~\cite{Wood}.
Note that looking at $\widehat{N}_t$ in particular,
we have
\[
\{\, L \in \End_\C(\widehat{W}_t) \mid 
L\widehat{N}_t = \widehat{N}_t L \,\}
= \End_{S_{k_t}}(\widehat{W}_t) = \g_{k_t}(V).
\]
\end{remark}

\begin{remark}\label{rem:oshima-normal}
For any nilpotent endomorphism $N \in \End(W)$, 
there exist a decomposition $W=\bigoplus_k W_k$ and 
injections $\iota \colon W_k \hookrightarrow W_{k-1}$ such that 
\begin{equation}\label{eq:oshima-normal}
N = \left(\, \begin{matrix}
                      0     & \quad \iota\quad &        & \quad 0      \\
                            & 0     & \ddots &        \\
                            &       & \ddots & \quad \iota   \\
                      0     &       &        & \quad 0 \end{matrix} \,\right).
\end{equation}
Such a normal form was effectively used by Oshima~\cite{Oshima-conj,Oshima-conn}.
\end{remark}

\begin{proposition}\label{prop:existence}
For any system $A(z) \in \EE_\vek (V)$ with $V \neq 0$, 
there exists a stable datum $(V,W,T,Q,P)$ 
satisfying $\Phi_T(Q,P) =A(z)$.
\end{proposition}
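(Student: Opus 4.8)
The plan is to start from the datum $(V,\widehat W,\widehat T,\widehat Q,\widehat P)$ constructed in Lemma~\ref{lem:existence}, which already satisfies $\Phi_{\widehat T}(\widehat Q,\widehat P)=A(z)$, and then cut it down to a stable one by passing to a suitable subquotient. Since stability is a condition about subrepresentations of the associated representation of $\QQ$ (equivalently of $\QQ_{\dim V}$ by Remark~\ref{rem:framing}), the natural move is to work vertex-by-vertex over $D$: by the proof of Lemma~\ref{lem:stability} one may assume $D=\{t\}$ and $\widehat T=\widehat N_t$ nilpotent, and Lemma~\ref{lem:stability} tells us precisely what must be arranged, namely $\Ker\widehat Q_t\cap\Ker\widehat N_t=0$ and $\range\widehat P_t+\range\widehat N_t=\widehat W_t$.

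First I would kill the excess kernel. Let $Y\subset\widehat W_t$ be the largest $\widehat N_t$-invariant subspace contained in $\Ker\widehat Q_t$; then $(0,Y)$ is a subrepresentation, and passing to the quotient datum on $\widehat W_t/Y$ — with $\widehat N_t$, $\widehat Q_t$, $\widehat P_t$ descending, which they do since $Y\subset\Ker\widehat Q_t$ is $\widehat N_t$-invariant — does not change $\Phi$, because $\widehat Q_t\widehat N_t^{k}\widehat P_t$ is computed on a complement of $Y$. After this quotient, condition (a) of Lemma~\ref{lem:stability} holds: no nonzero $\widehat N_t$-invariant subspace lies in the kernel of the new $\widehat Q_t$, since a nonzero such subspace would contain a nonzero vector in $\Ker\widehat N_t$, contradicting $\Ker\widehat Q_t\cap\Ker\widehat N_t=0$ (this is exactly the argument in the proof of Lemma~\ref{lem:stability}). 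Dually, I would then enforce (b) by restricting to the smallest $\widehat N_t$-invariant subspace containing $\range\widehat P_t$: this is a subrepresentation of the form $(V,W')$, again $\Phi$ is unchanged because $\widehat Q_t\widehat N_t^k\widehat P_t$ only involves vectors in $\range\widehat P_t$ and its $\widehat N_t$-iterates, all of which lie in $W'$, and on $W'$ one has $\range\widehat P_t+\range(\widehat N_t|_{W'})=W'$ by minimality. One must check the first operation does not spoil what the second needs, and vice versa — but restricting to a subrepresentation only shrinks kernels, and quotienting only shrinks images-of-$N$ relative to $\range P$ in a harmless way; cleanly, one can simply observe that after doing the quotient step the resulting datum still satisfies (a), and then the subrepresentation step produces (b) while preserving (a) (a subspace of $\Ker N\cap\Ker Q$ for the restricted data is one for the original).

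Assembling over all $t\in D$ gives a datum $(V,W,T,Q,P)$ with $\Phi_T(Q,P)=A(z)$ satisfying (a) and (b) of Lemma~\ref{lem:stability} at every $t$, hence stable by that lemma, provided $W$ (equivalently each $W_t$) is what it should be; note $V$ is untouched throughout so $V\neq0$ persists, which is the hypothesis needed for Lemma~\ref{lem:stability} and Definition~\ref{dfn:stable} to apply. The condition \eqref{eq:T} on $T$ is inherited since each $\widehat N_t$ was already $k_t$-step nilpotent and nilpotency degree does not increase under subquotients.

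The step I expect to be the main obstacle is verifying that the two reductions (quotient by the largest $N$-invariant subspace of $\Ker Q$, then restrict to the smallest $N$-invariant subspace containing $\range P$) are genuinely compatible, i.e.\ that performing them in this order lands in the locus where \emph{both} Lemma~\ref{lem:stability} conditions hold simultaneously, rather than the second undoing the first. The clean way around it is to phrase everything through Remark~\ref{rem:framing}: the datum is a representation of $\QQ_{\dim V}$ with $\C$ at $\infty$, and stability is irreducibility of that representation; one takes any irreducible subquotient whose space at $\infty$ is $\C$ (which exists and is unique once we insist $\range P$ survives and $\Ker Q$-type directions die), and invariance of $\Phi$ under passing to it follows from the explicit formula $\Phi_T(Q,P)=\sum_t\sum_k Q_tN_t^{k-1}P_t/(z-t)^k$ together with the fact that $P$ maps into, and $Q$ is determined on, the subquotient. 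That is really the only non-formal point; everything else is bookkeeping already carried out in Lemmas~\ref{lem:existence} and \ref{lem:stability}.
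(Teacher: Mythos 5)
Your proof is correct, but it is organized differently from the paper's. The paper simply takes any datum with $\Phi_T(Q,P)=A(z)$ (supplied by Lemma~\ref{lem:existence}) for which $\dim W$ is minimal, and argues that instability is impossible: a subrepresentation of type $(0,Y)$ with $Y\neq 0$ would allow passing to the quotient datum on $W/Y$, and one of type $(V,Y)$ with $Y\neq W$ to the restricted datum on $Y$, in both cases preserving $\Phi$ and strictly decreasing $\dim W$, a contradiction. You perform these same two $\Phi$-preserving moves explicitly (quotient by the largest $T$-invariant subspace of $\Ker Q$, then restrict to the smallest $T$-invariant subspace containing $\range P$) and then verify stability through the criterion of Lemma~\ref{lem:stability}, which obliges you to check, as you do, that the restriction step does not destroy condition (a) gained in the quotient step (it does not, since kernels only shrink under restriction, while the quotient-then-restrict order gives (b) by minimality). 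The paper's minimality trick makes this compatibility check unnecessary and does not use Lemma~\ref{lem:stability} at all; your version, on the other hand, is constructive and produces a concrete stable datum from the one of Lemma~\ref{lem:existence}, in the spirit of the canonical datum of Definition~\ref{dfn:canonical} and Proposition~\ref{prop:explicit}. One caveat: your suggested ``clean'' reformulation via an arbitrary irreducible subquotient of the $\QQ_{\dim V}$-representation is looser than your main argument, since a general irreducible subquotient with $\C$ at the vertex $\infty$ need not preserve $\Phi$; the specific quotient-then-restrict procedure (or the paper's minimality argument) is genuinely needed, but as your main argument carries it out in full, the proof stands.
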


\begin{proof}
Take a datum $(V,W,T,Q,P)$ satisfying $\Phi_T(Q,P) =A(z)$.
We show it is stable if $\dim W$ is minimal among all such data. 
Assume that it is not stable, so we have 
a subrepresentation $(X,Y)$ such that:
\begin{enm}
\item[(a)] $X=0$ and $Y \neq 0$, or 
\item[(b)] $X=V$ and $Y \neq W$.
\end{enm}
First assume $X=0$ and $Y \neq 0$. 
Then $Y$ satisfies $Q(Y)=0$ and $T(Y) \subset Y$.
Thus the datum $(V,W,T,Q,P)$ induces a datum $(V,W/Y,T',Q',P')$ 
in the obvious way. 
Clearly we have 
\[
Q ( z\,\unit_W - T)^{-1} P = Q' ( z\,\unit_{W/Y} - T')^{-1} P',
\]
which contradicts the assumption that $\dim W$ is minimal.
Next assume that $X=V$ and $Y \neq W$.
Then $Y$ satisfies $\range P \subset Y$ and $T(Y) \subset Y$.
Thus the datum $(V,W,T,Q,P)$ induces a datum $(V,Y,T|_Y,Q|_Y,P)$ 
in the obvious way, and clearly 
\[
Q ( z\,\unit_W - T)^{-1} P = Q|_Y ( z\,\unit_Y - T|_Y)^{-1} P,
\]
which contradicts the assumption again. 
Hence $(V,W,T,Q,P)$ is stable.
\end{proof}

In fact, for given $A(z) \in \EE_\vek (V)$, 
we can also construct explicitly a stable datum 
satisfying $\Phi_T(Q,P)=A(z)$ as follows.
Let $(V,\widehat{W},\widehat{T},\widehat{Q},\widehat{P})$ be 
the datum defined in the proof of Lemma~\ref{lem:existence}, 
and set
\footnote{The definition of the matrix $\widehat{A}_t$ 
was suggested by Takemura.}
\begin{equation}\label{eq:matrix}
\widehat{A}_t := 
\left(\, \begin{matrix}
                      A_{t,k_t}     & A_{t,k_t-1} & \cdots & A_{t,1} \\
                            & A_{t,k_t}     & \ddots & \vdots \\
                            &       & \ddots & A_{t,k_t-1}  \\
                      0     &       &        & A_{t,k_t} \end{matrix} \,\right) 
\in \End(\widehat{W}_t).
\end{equation}
Then one can easily see
\[
\widehat{A}_t \widehat{N}_t = \widehat{N}_t \widehat{A}_t, \quad
\widehat{Q}_t = \begin{pmatrix} \unit_V & 0 & \cdots & 0 \end{pmatrix} \widehat{A}_t.
\]
The second relation implies $\Ker \widehat{A}_t \subset \Ker \widehat{Q}_t$. 
Thus setting $W':=\bigoplus_t \Ker \widehat{A}_t$, 
we see that the pair $(0,W')$ is a subrepresentation of the datum
$(V,\widehat{W},\widehat{T},\widehat{Q},\widehat{P})$.
Now let $(V,W,T,Q,P)$ be the quotient datum of 
$(V,\widehat{W},\widehat{T},\widehat{Q},\widehat{P})$ 
by $(0,W')$, namely, 
$W:=\widehat{W}/W'=\bigoplus \widehat{W}_t/\Ker \widehat{A}_t$
and $T,Q,P$ are the maps induced from 
$\widehat{T},\widehat{Q},\widehat{P}$ respectively.

\begin{definition}\label{dfn:canonical}
The datum $(V,W,T,Q,P)$ given above is called  
the {\em canonical datum} for the system $A(z) \in \EE_\vek(V)$.
\end{definition}

\begin{proposition}\label{prop:explicit}
The canonical datum for any $A(z) \in \EE_\vek(V)$ with $V \neq 0$ is stable.
\end{proposition}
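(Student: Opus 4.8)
The plan is to verify that the canonical datum satisfies the two conditions of Lemma~\ref{lem:stability}, i.e.\ for each $t\in D$, that $\Ker Q_t\cap\Ker N_t=0$ and $\range P_t+\range N_t=W_t$ in the quotient space $W_t=\widehat{W}_t/\Ker\widehat{A}_t$. Since $D$ plays no essential role here we may as well fix one $t\in D$ and work with the single block, dropping the subscript and writing $k=k_t$. Recall from Remark~\ref{rem:jet} that $\widehat{W}_t\simeq S_k\otimes_\C V$ with $\widehat{N}_t$ acting as $\unit_V\otimes z$, that $\widehat{P}_t=\unit_V\otimes 1$, and that $\widehat{A}_t$ is the element of $\End_\C(\widehat W_t)$ commuting with $\widehat N_t$ whose ``symbol'' is $\sum_{j=1}^{k}A_{t,j}z^{-j}$ acting by multiplication (after identifying $\End_\C(\widehat W_t)$-elements commuting with $\widehat N_t$ with $\g_k(V)$ via the pairing $S_k^*\simeq S^k$). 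The map $\widehat Q_t$ is the composition of $\widehat A_t$ with the projection $\widehat W_t=S_k\otimes V\to V$ picking out the top coefficient (the coefficient of $z^{k-1}$, equivalently evaluation of the residue pairing against $z^{-k}$).

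First I would prove condition~(b), surjectivity of $\range P_t+\range N_t\to W_t$. In $\widehat W_t$ we have $\range\widehat P_t+\range\widehat N_t$ equal to the span of $\unit_V\otimes 1$ together with $z\cdot\widehat W_t=S_k\otimes V$ restricted to elements divisible by $z$; since every element of $S_k\otimes V$ is a $\C$-linear combination of $z^j\otimes v$ for $0\le j\le k-1$ and $v\in V$, and those with $j\ge 1$ lie in $\range\widehat N_t$ while $z^0\otimes v=\widehat P_t(v)$, we get $\range\widehat P_t+\range\widehat N_t=\widehat W_t$ already upstairs; passing to the quotient preserves surjectivity, giving~(b). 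The real work is condition~(a): I must show that a vector $w\in\widehat W_t$ with $\widehat N_t w\in\Ker\widehat A_t$ (equivalently $\widehat A_t\widehat N_t w=0$, i.e.\ $\widehat N_t\widehat A_t w=0$ since $\widehat A_t$ commutes with $\widehat N_t$) and $\widehat Q_t w\in\widehat Q_t(\Ker\widehat A_t)=0$, i.e.\ $\widehat Q_t w=0$, already satisfies $w\in\Ker\widehat A_t$. Writing $y:=\widehat A_t w$, the hypotheses say $\widehat N_t y=0$ and $\widehat Q_t w=0$; but $\widehat Q_t=(\text{top coefficient})\circ\widehat A_t$, so $\widehat Q_t w$ is precisely the top coefficient of $y$. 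In the coordinates $\widehat W_t=V^{\oplus k}$ with $\widehat N_t$ the shift, $\widehat N_t y=0$ forces $y$ to be supported in the bottom component, and then ``top coefficient of $y$ is $0$'' forces $y=0$ unless $k=1$; in the case $k=1$, $\widehat N_t=0$ and $y=\widehat Q_t w=0$ directly. Hence $\widehat A_t w=y=0$, i.e.\ $w\in\Ker\widehat A_t$, which is exactly the statement that the class of $w$ in $W_t$ is zero. This establishes~(a).

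The main obstacle, and the step I would write most carefully, is keeping the two descriptions of $\widehat A_t$ and $\widehat Q_t$ consistent: the explicit matrix form in \eqref{eq:matrix}/Lemma~\ref{lem:existence} versus the intrinsic ``multiplication operator'' picture of Remark~\ref{rem:jet}. In particular one must be sure that $\widehat Q_t$ really equals the composition of $\widehat A_t$ with the projection onto the top component of $V^{\oplus k}$ — this is the displayed relation $\widehat Q_t=\begin{pmatrix}\unit_V&0&\cdots&0\end{pmatrix}\widehat A_t$ noted just before Definition~\ref{dfn:canonical}, and it is the linchpin of condition~(a). Everything else is the routine linear algebra of the nilpotent shift $\widehat N_t$ (its kernel is the bottom component, its image is everything with zero bottom component), together with an invocation of Lemma~\ref{lem:stability} to package conditions~(a) and~(b) into stability. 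Since $V\neq 0$ is assumed, the datum is nonzero and Lemma~\ref{lem:stability} applies verbatim, completing the proof.
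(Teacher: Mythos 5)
Your proposal is correct and follows essentially the same route as the paper: stability is checked through Lemma~\ref{lem:stability}, condition (b) already holds upstairs in $\widehat{W}_t$, and condition (a) rests on the relations $\widehat{Q}_t=\begin{pmatrix}\unit_V & 0 & \cdots & 0\end{pmatrix}\widehat{A}_t$ and $\widehat{A}_t\widehat{N}_t=\widehat{N}_t\widehat{A}_t$, the paper merely packaging your final component inspection as the invertibility of $\widehat{P}_t\begin{pmatrix}\unit_V & 0 & \cdots & 0\end{pmatrix}+\widehat{N}_t$. One bookkeeping slip to correct when writing it up: $\Ker\widehat{N}_t$ is the top-degree (first, i.e.\ $z^{k-1}$) component of $V^{\oplus k}$ --- precisely the component that $\widehat{Q}_t$ extracts through $\widehat{A}_t$, which is why $y=0$ follows for every $k$ --- not the ``bottom'' one, and correspondingly $\range\widehat{N}_t$ consists of the vectors with vanishing bottom ($z^0$) component.
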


\begin{proof}
We use Lemma~\ref{lem:stability}.
The condition $\range P_t + \range N_t = W_t$ is equivalent to 
$\range \widehat{P}_t + \range \widehat{N}_t + \Ker \widehat{A}_t = \widehat{W}_t$, 
which immediately follows from 
the definitions of $\widehat{P}_t$ and $\widehat{N}_t$.
To prove $\Ker Q_t \cap \Ker N_t =0$, 
suppose that $\hat{w} \in \widehat{W}_t$ 
represents some element in $\Ker Q_t \cap \Ker N_t$.
Then we have
\[
\widehat{Q}_t \hat{w}=0, \quad \widehat{A}_t \widehat{N}_t \hat{w} =0.
\]
The first equation implies 
\[
\widehat{P}_t \begin{pmatrix} \unit_V & 0 & \cdots & 0 \end{pmatrix} \widehat{A}_t \hat{w}
= \widehat{P}_t \widehat{Q}_t \hat{w}=0,
\]
and on the other hand, it is easy to see that
\[
\det \left[ \widehat{P}_t \begin{pmatrix} \unit_V & 0 & \cdots & 0 \end{pmatrix} + \widehat{N}_t \right] \neq 0.
\]
Since $\widehat{N}_t \widehat{A}_t \hat{w} = \widehat{A}_t \widehat{N}_t \hat{w} =0$, 
we obtain $\widehat{A}_t \hat{w} =0$. Hence $\Ker Q_t \cap \Ker N_t =0$.

By Lemma~\ref{lem:stability}, the datum is stable.
\end{proof}

\begin{remark}
On the viewpoint mentioned in Remark~\ref{rem:jet}, 
the matrix $\widehat{A}_t$ is written as
\[
\widehat{A}_t = \sum_{k=1}^{k_t} 
A_{t,k} \otimes_\C z^{k_t-k}\,\unit_{S_{k_t}} 
\in \End_\C(V) \otimes_\C \End_\C(S_{k_t}) = \End_\C (\widehat{W}_t).
\]
\end{remark}

\begin{remark}\label{rem:canonical}
In what follows we assume $D=\{ 0 \}$ and omit the subscript $t=0$.

(a) The matrix $\widehat{A} \in \End(\widehat{W})$ is invertible 
if and only if the top coefficient $A_k$ of $A(z)$ is invertible.

(b) Set $d:=\ord(A)$ (see \eqref{eq:order} for the definition) 
and identify $\C^k$ with $S_k$ as in Remark~\ref{rem:jet}.
Then it is easy to see that $\widehat{A}$ vanishes on 
the subspace $V \otimes z^d S_k$ 
and so induces a homomorphism
\[
V \otimes S_d \simeq V \otimes (S_k/z^d S_k) 
\xrightarrow{\widehat{A}} V \otimes (S_k/z^d S_k) \simeq V \otimes S_d.
\]
Clearly it coincides with 
the matrix $\widehat{A} \in \End(V \otimes \C^d)$ 
constructed from $A(z)$ 
regarded as an element of $\EE_d(V)$.
Hence the construction of the canonical datum does not depend on the choice 
of $k \geq \ord(A)$.

(c) For $i=1,2$, let $V^i$ be a nonzero finite-dimensional $\C$-vector space, 
$A^i(z) \in \EE_k(V^i)$, and
$(V^i,W^i,N^i,Q^i,P^i)$ be the canonical datum for $A^i(z)$.
Then the canonical datum for the direct sum 
$A(z):=A^1(z) \oplus A^2(z) \in \EE_k(V^1 \oplus V^2)$ is naturally 
identified with
\[
\bigoplus_{i=1,2} (V^i,W^i,N^i,Q^i,P^i) 
=(V^1 \oplus V^2,W^1 \oplus W^2,N^1 \oplus N^2,Q^1 \oplus Q^2,P^1 \oplus P^2).
\]
More generally, if $(V^i,W^i,N^i,Q^i,P^i),\, i=1,2$ 
are stable data with $V^i \neq 0$, then the above direct sum is also stable 
and 
\[
\Phi_{N^1 \oplus N^2} (Q^1 \oplus Q^2,P^1 \oplus P^2) = 
\Phi_{N^1}(Q^1,P^1) \oplus \Phi_{N^2}(Q^2,P^2).
\]
\end{remark}

For a subset $X$ of $\bM(V,W)$ with $V \neq 0$ and 
an endomorphism $T \in \End(W)$ satisfying \eqref{eq:T}, 
we set
\[
X^{T\st} := \{\,(Q,P) \in X \mid
\text{$(V,W,T,Q,P)$ is stable}\,\}.
\]
One can easily see that 
$\bM(V,W)^{T\st}$ is invariant 
under the action of the centralizer $G_T$ of $T$ 
(Note that $G_T \subset \GL(\bW)$).

\begin{proposition}\label{prop:geom}
Suppose that $V \neq 0$ and $T \in \End(W)$ satisfies \eqref{eq:T}.

{\rm (a)} The $G_T$-action on $\bM(V,W)^{T\st}$ is free and proper.

{\rm (b)} The map $\Phi_T$ induces a Poisson embedding
\[
\bM(V,W)^{T\st} /G_T \hookrightarrow \EE_\vek (V).
\]

{\rm (c)} If 
$\Phi_T(\bM(V,W)^{T\st}) \cap \Phi_{T'}(\bM(V,W')^{T'\st}) \neq \emptyset$, 
then there exists an isomorphism $f \colon W \to W'$ such that 
$T'=fTf^{-1}$.

{\rm (d)} The subset $\bM(V,W)^{T\st}$ is invariant 
under the action of $G_\vek (V)$.
\end{proposition}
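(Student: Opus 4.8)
The plan is to reduce everything to the invariant theory of the reductive group $\GL(\bW)$ acting on $\Rep_\QQ(V,\bW)$, via the dictionary of Remark~\ref{rem:framing}. The observation to record first is that $\bM(V,W)$ sits inside $\Rep_\QQ(V,\bW)$ as the closed affine subspace $\mathcal S_T$ on which the endomorphism attached to the loop at each $t\in D$ is prescribed to be $N_t=(T-t\,\unit_W)|_{W_t}$, and that under this inclusion $\bM(V,W)^{T\st}=\mathcal S_T\cap\Rep_\QQ(V,\bW)^{s}$, where $\Rep_\QQ(V,\bW)^{s}$ is the open locus of points whose associated representation of $\QQ_{\dim V}$ is irreducible (Remark~\ref{rem:framing}). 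On $\Rep_\QQ(V,\bW)^{s}$ the $\GL(\bW)$-stabilizers are trivial (Lemma~\ref{lem:stable-free}) and the orbits are closed (Lemma~\ref{lem:polystable}), so $\GL(\bW)$ acts there freely and properly with geometric quotient $\mathcal M^{s}$ a principal $\GL(\bW)$-bundle. A group element $b\in\GL(\bW)$ preserves $\mathcal S_T$ if and only if $b_t$ commutes with $N_t$ for all $t$, i.e.\ $b\in G_T$; hence $\GL(\bW)\cdot\bM(V,W)^{T\st}\cong\GL(\bW)\times_{G_T}\bM(V,W)^{T\st}$, and the $\GL(\bW)$-orbits meet $\bM(V,W)^{T\st}$ in exactly the $G_T$-orbits. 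Restricting the principal $\GL(\bW)$-bundle $\Rep_\QQ(V,\bW)^{s}\to\mathcal M^{s}$ along this inclusion exhibits $\bM(V,W)^{T\st}\to\bM(V,W)^{T\st}/G_T$ as a principal $G_T$-bundle; this is part (a), and it also makes the quotient $\bM(V,W)^{T\st}/G_T$ a smooth variety.

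For part (b), $\Phi_T$ is $G_T$-invariant by a one-line computation: for $b\in G_T$ one has $Qb^{-1}(z\,\unit_W-T)^{-1}bP=Q(z\,\unit_W-T)^{-1}P$ because $b$ commutes with $T$. Being the moment map of the Hamiltonian $G_\vek(V)$-action (Theorem~\ref{thm:AHHP}), $\Phi_T$ is a Poisson map into $\g_\vek^{*}(V)\simeq\EE_\vek(V)$; and since $\bM(V,W)^{T\st}$ is open in the symplectic manifold $\bM(V,W)$ with $G_T$ acting freely, properly and symplectically, the quotient is Poisson and $\Phi_T$ descends to a Poisson map $\bar\Phi_T\colon\bM(V,W)^{T\st}/G_T\to\EE_\vek(V)$. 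Injectivity of $\bar\Phi_T$ is exactly Proposition~\ref{prop:unique}.

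To see that $\bar\Phi_T$ is an embedding---the delicate point---I would factor $\Phi_T$ through the affine GIT quotient $\mathcal M=\Rep_\QQ(V,\bW)\GIT\GL(\bW)$. By the theorem of Lusztig quoted in the proof of Proposition~\ref{prop:unique}, $\C[\Rep_\QQ(V,\bW)]^{\GL(\bW)}$ is generated by the functions $\chi(Q_sN_s^{k}P_s)$ and $\tr N_s^{k}$, so $\mathcal M$ closed-embeds into $\EE_\vek(V)\times\mathcal T$, where $\mathcal T$ is the affine space of the trace coordinates; on $\mathcal S_T$ the first family of generators is precisely the tuple of coefficients of $\Phi_T(Q,P)$ while the second is constant. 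Thus $\pi(\bM(V,W)^{T\st})$ lies in a fixed slice $\EE_\vek(V)\times\{\ast\}$, and $\bar\Phi_T$ is identified, via this slice, with the inclusion of $\pi(\bM(V,W)^{T\st})\cong\bM(V,W)^{T\st}/G_T$ into $\EE_\vek(V)$. This image is locally closed: $\pi(\bM(V,W)^{T\st})$ is the geometric quotient of the $\GL(\bW)$-invariant set $\GL(\bW)\cdot\bM(V,W)^{T\st}$, which is the intersection of $\Rep_\QQ(V,\bW)^{s}$ with the preimage of $\prod_t\mathcal O_{N_t}$ under the projection $\Rep_\QQ(V,\bW)\to\bigoplus_t\End(W_t)$, hence locally closed ($\mathcal O_{N_t}$ being the nilpotent conjugacy class of $N_t$). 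And $\bar\Phi_T$ is an immersion by a tangent-space count: if $(\dot Q,\dot P,0)\in(\Lie\GL(\bW))\cdot x$, then $\dot Q=-Q\beta$, $\dot P=\beta P$ with $[\beta_t,N_t]=0$ for all $t$, i.e.\ $(\dot Q,\dot P)\in(\Lie G_T)\cdot x$. Since an injective immersion of smooth varieties with locally closed image is an isomorphism onto its image, $\bar\Phi_T$ is the asserted Poisson embedding.

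Parts (c) and (d) are short. Part (c) is a weakening of Proposition~\ref{prop:unique}: a point $A(z)$ in both images comes from stable data $(V,W,T,Q,P)$ and $(V,W',T',Q',P')$ with $\Phi_T(Q,P)=A(z)=\Phi_{T'}(Q',P')$, and Proposition~\ref{prop:unique} produces an isomorphism $f\colon W\to W'$ with $T'=fTf^{-1}$. For part (d) I would use the criterion of Lemma~\ref{lem:stability}: from \eqref{eq:action}, for $g=(g_t)\in G_\vek(V)$ one has $(g\cdot Q)_t\equiv g_{t,0}Q_t$ on $\Ker N_t$ and $(g\cdot P)_t\equiv P_t(g_{t,0})^{-1}$ modulo $\range N_t$, where $g_{t,0}=g_t(0)\in\GL(V)$ is invertible; hence $\Ker(g\cdot Q)_t\cap\Ker N_t=\Ker Q_t\cap\Ker N_t$ and $\range(g\cdot P)_t+\range N_t=\range P_t+\range N_t$, so the stability of $(V,W,T,Q,P)$ passes to $(V,W,T,g\cdot Q,g\cdot P)$. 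The main obstacle is the embedding claim in (b), specifically the local closedness of the image and the immersion property; everything else---(a), (c), (d), and the injectivity and Poisson property in (b)---is essentially formal given the results already in hand.
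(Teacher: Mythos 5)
Your treatment of (a), (c) and (d) is essentially the paper's: freeness from Lemma~\ref{lem:stable-free}, properness inherited from the $\GL(\bW)$-action on the irreducible locus of $\Rep_{\QQ_{\dim V}}(\C,\bW)$ via the closed embedding $\bM(V,W)\hookrightarrow\Rep_\QQ(V,\bW)$, (c) directly from Proposition~\ref{prop:unique}, and (d) by exactly the computation with \eqref{eq:action} and Lemma~\ref{lem:stability} that the paper gives. Two small remarks on (a): your inference ``trivial stabilizers and closed orbits, hence the $\GL(\bW)$-action is free and proper'' is not a formal implication but is precisely the GIT input the paper cites (\cite[Corollary 2.5]{GIT} together with King); and Lemmas~\ref{lem:stable-free}, \ref{lem:polystable} are stated for points coming from data, whereas you invoke them on the whole irreducible locus (harmless -- Schur's lemma and Le Bruyn--Procesi give the same conclusions there -- but the properness itself still rests on the citation). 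Your observation that a $\GL(\bW)$-orbit meets the slice $\mathcal S_T$ in a single $G_T$-orbit is a nice way to make the restriction of properness explicit; the paper leaves this implicit. Where you genuinely diverge is (b): the paper disposes of it with ``follows from Proposition~\ref{prop:unique}'', i.e.\ it records only injectivity plus the Poisson property, while you factor through the affine quotient, use Lusztig's generators to place the image in the slice $\EE_\vek(V)\times\{\ast\}$, prove local closedness of the image, and establish the immersion property by the tangent computation $T_x(\GL(\bW)\cdot x)\cap T_x\bM(V,W)=T_x(G_T\cdot x)$. That extra work is not wasted: the immersion statement is exactly what the paper later uses (Lemma~\ref{lem:duality} deduces $\Ker d_x\Phi_T=T_x(G_T\cdot x)$ from part (b)), so your route actually documents a point the paper's one-line proof glosses over.

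One caveat on your final step in (b): the principle ``an injective immersion of smooth varieties with locally closed image is an isomorphism onto its image'' is false in general -- restrict the normalization of a nodal plane cubic to the complement of one preimage of the node; this is an injective immersion with closed image that is not an isomorphism (nor a homeomorphism in the classical topology) onto it. So if ``Poisson embedding'' is read in the strong sense of an isomorphism onto a locally closed subvariety, your last sentence does not close the argument. However, nothing in the paper requires more than an injective Poisson immersion -- which you have fully established, and which is all the paper's own proof of (b) can be claiming -- so this is an overstatement at the end rather than a gap in what is needed.
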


\begin{proof}
(a) Considering the $G_T$-equivariant closed embedding
\[
\varphi \colon \bM(V,W) \hookrightarrow \Rep_\QQ(V,\bW),
\quad (Q,P) \mapsto (Q_t,P_t,N_t)_{t \in D},
\]
we see that the action is free by Lemma~\ref{lem:stable-free}.
To see properness, 
we use an identification 
$\Rep_\QQ(V,\bW) \simeq \Rep_{\QQ_{\dim V}}(\C,\bW)$ 
and consider the subset of irreducible representations 
$\Rep_{\QQ_{\dim V}}^\text{irr}(\C,\bW)$.
We have
\[
\varphi\bigl(\bM(V,W)^{T\st}\bigr) = 
\varphi\bigl(\bM(V,W)\bigr) \cap 
\Rep_{\QQ_{\dim V}}^\text{irr}(\C,\bW).
\]
By a standard fact in the geometric invariant theory
~\cite[Corollary 2.5]{GIT} 
together with King's work~\cite{King}, 
the $\GL(\bW)$-action on 
$\Rep_{\QQ_{\dim V}}^\text{irr}(\C,\bW)$ is proper. 
Therefore the above implies the properness of the $G_T$-action 
on $\bM(V,W)^{T\st}$.

(b) and (c) follow from Proposition~\ref{prop:unique}. 

(d) The explicit description of the action \eqref{eq:action} 
shows that 
\begin{align*}
\Ker (g \cdot Q)_t \cap \Ker N_t &= 
\Ker Q_t \cap \Ker N_t, \\
\range (g \cdot P)_t + \range N_t &=
\range P_t + \range N_t
\end{align*}
for any $(Q,P) \in \bM(V,W)$, $g \in G_\vek(V)$ and $t \in D$. 
Therefore Lemma~\ref{lem:stability} implies the result.
\end{proof}

Let $\g_T$ be the Lie algebra of $G_T$ and 
$p_T \colon \gl(W) \to \g_T^*$ be 
the transpose of the inclusion $\g_T \hookrightarrow \gl(W)$.
Recall that the map 
\[
\mu_W \colon \bM(V,W) \to \gl(W); 
\quad (Q,P) \mapsto -PQ
\]
is a moment map generating the $\GL(W)$-action. 
We set
\[
\mu_T := p_T \circ \mu_W \colon \bM(V,W) \to \g_T^*, 
\]
which is a $G_T$-moment map. 

\begin{lemma}\label{lem:invariant}
The map $\mu_T$ is $G_\vek (V)$-invariant.
\end{lemma}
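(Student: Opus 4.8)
The plan is to reduce the statement to an identity between traces and then verify it by a direct computation using the explicit formula \eqref{eq:action} for the $G_\vek(V)$-action. By definition $\mu_T = p_T \circ \mu_W$ with $\mu_W(Q,P) = -PQ$, and $p_T$ is the transpose of the inclusion $\g_T \hookrightarrow \gl(W)$ with respect to the trace pairing; hence $\mu_T$ is $G_\vek(V)$-invariant if and only if
\[
\tr\bigl((g \cdot P)(g \cdot Q)\,\xi\bigr) = \tr(PQ\,\xi)
\quad\text{for all } g \in G_\vek(V),\ (Q,P) \in \bM(V,W),\ \xi \in \g_T .
\]
Since the $W_t$ are the distinct generalized eigenspaces of $T$, any $\xi$ commuting with $T$ preserves each $W_t$, so $\g_T = \bigoplus_{t \in D}\{\,\xi_t \in \End(W_t) \mid \xi_t N_t = N_t \xi_t\,\}$; as the $(t,t)$-block of $PQ \in \End(W)$ is $P_t Q_t$ and likewise that of $(g\cdot P)(g\cdot Q)$ is $(g\cdot P)_t(g\cdot Q)_t$, it suffices to prove, for each $t$ and each $\xi_t \in \End(W_t)$ commuting with $N_t$, that $\tr\bigl((g\cdot P)_t (g\cdot Q)_t\,\xi_t\bigr) = \tr(P_t Q_t\,\xi_t)$.

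For the computation I would substitute \eqref{eq:action}, obtaining $(g\cdot P)_t (g\cdot Q)_t = \sum_{j,k} N_t^k P_t (g_t^{-1})_k\, g_{t,j} Q_t N_t^j$, and then use cyclicity of the trace to move $N_t^k$ to the right end and the relation $\xi_t N_t = N_t \xi_t$ to merge $N_t^j \xi_t N_t^k = N_t^{j+k}\xi_t$. After grouping the terms by $m := j+k$ and discarding those with $m \ge k_t$ (where $N_t^m = 0$, so that for the surviving terms $j,k$ automatically lie in $[0,k_t-1]$), the inner sum becomes $\sum_{j+k=m}(g_t^{-1})_k\, g_{t,j}$, which is exactly the $z^m$-coefficient of $g_t^{-1}(z) g_t(z) = \unit_V$ in $\g_{k_t}(V)$, hence $\delta_{m,0}\,\unit_V$ for $0 \le m \le k_t - 1$. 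Only $m = 0$ survives, leaving $\tr(P_t Q_t\,\xi_t)$, as desired.

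I do not expect a genuine obstacle here: the whole argument is the reduction to the blockwise trace identity plus this short manipulation, the only points needing care being the direct-sum decomposition of $\g_T$ (so that it suffices to argue one block at a time) and keeping the index ranges straight when applying $g_t g_t^{-1} = \unit$. Conceptually the reason behind the lemma is that the $G_\vek(V)$-action of Theorem~\ref{thm:AHHP} commutes with the $G_T$-action on $\bM(V,W)$ — again visible from \eqref{eq:action}, since for $b \in G_T$ each block $b_t$ commutes with every power of $N_t$ — and fixes the origin $(0,0)$, at which $\mu_T$ vanishes; thus $g^*\mu_T$ and $\mu_T$ are two moment maps for the $G_T$-action on the connected space $\bM(V,W)$ agreeing at a point, hence equal. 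Either route works, and I would write up the direct one since it is self-contained.
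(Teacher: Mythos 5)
Your proof is correct and is essentially the paper's own argument: the paper likewise works block by block, uses the projection $p_t$ onto $\g_{N_t}^*$ (your trace pairing against $\xi_t$ commuting with $N_t$) to justify $p_t(A_tN_t)=p_t(N_tA_t)$, collects powers $N_t^{k+l}$, and invokes $g_t^{-1}(z)g_t(z)=1$ modulo $z^{k_t}$ to reduce to $p_t(P_tQ_t)$. The only difference is cosmetic phrasing ($p_t$ versus explicit traces), so nothing further is needed.
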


\begin{proof}
For each $t \in D$, let $G_{N_t} \subset \GL(W_t)$ 
be the centralizer of $N_t$, 
$\g_{N_t}$ be its Lie algebra, 
and $p_t \colon \gl(W_t) \to \g_{N_t}^*$ 
be the transpose of the inclusion 
$g_{N_t} \hookrightarrow \gl(W_t)$. 
Then obviously we have $p_t(A_t N_t)=p_t(N_t A_t)$ for any $A_t \in \gl(W_t)$, 
which implies  
\begin{align*}
p_t \bigl( (g_t \cdot P_t)(g_t \cdot Q_t) \bigr) &= 
p_t \left( \sum_{k,l \geq 0} N_t^k P_t (g_t^{-1})_k g_{t,l} Q_t N_t^l \right)
\\ &= 
p_t \left( \sum_{k,l \geq 0} N_t^{k+l} P_t (g_t^{-1})_k g_{t,l} Q_t \right).
\end{align*}
Substituting the equality 
$g_t^{-1}(z) g_t (z) =1$ (mod $z^{k_t}$) into the above, we have
\[
p_t \bigl( (g_t \cdot P_t)(g_t \cdot Q_t) \bigr) =
p_t \left( P_t Q_t \right).
\]
Since 
\[
\mu_T(Q,P) = -\bigl[ p_t (P_t Q_t) \bigr]_{t \in D} 
\in \bigoplus_t \g_{N_t}^* = \g_T,
\]
we obtain the result.
\end{proof}

\begin{lemma}\label{lem:duality}
Suppose $V \neq 0$.
Then the tangent space of the $G_\vek(V)$-orbit 
through any $x \in \bM(V,W)^{T\st}$ coincides with
$\Ker d_x \mu_T$, and its dimension is constant on $\bM(V,W)^{T\st}$.
\end{lemma}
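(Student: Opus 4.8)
The plan is to exploit that $\Phi_T$ and $\mu_T$ are moment maps for two commuting Hamiltonian actions on $\bM(V,W)$ --- that of $G_\vek(V)$ (Theorem~\ref{thm:AHHP}) and that of $G_T$ --- so that the lemma expresses that this pair is ``dual'' along $\bM(V,W)^{T\st}$. Recall that for any moment map $\mu$ of an action of a group $G$ on a symplectic vector space one has $\Ker d_x\mu=\bigl(T_x(G\cdot x)\bigr)^{\omega}$ at every $x$, where $(\cdot)^{\omega}$ denotes the symplectic-orthogonal complement, and that applying $(\cdot)^{\omega}$ twice is the identity since $\omega$ is nondegenerate. Using this for $(G,\mu)=(G_\vek(V),\Phi_T)$ and for $(G,\mu)=(G_T,\mu_T)$ and taking symplectic orthogonals, the claimed identity $T_x(G_\vek(V)\cdot x)=\Ker d_x\mu_T$ becomes equivalent to
\[
\Ker d_x\Phi_T=T_x(G_T\cdot x),\qquad x\in\bM(V,W)^{T\st}.
\]
Granting this, the constancy of the dimension drops out: by Proposition~\ref{prop:geom}(a) the $G_T$-action on $\bM(V,W)^{T\st}$ is free, so $d_x\mu_T$ is surjective and $\dim\Ker d_x\mu_T=\dim\bM(V,W)-\dim G_T$ for all $x$.

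So I would concentrate on the displayed equality. The inclusion $\supseteq$ comes for free: every $f\in G_T$ commutes with $T$, so a one-line substitution gives $\Phi_T(f\cdot(Q,P))=\Phi_T(Q,P)$, hence $d_x\Phi_T$ annihilates $T_x(G_T\cdot x)$ (this is Lemma~\ref{lem:invariant} read through symplectic duality). The reverse inclusion is the main obstacle, and it is here that the stability of $x$ enters. Since the pole of $\Phi_T$ at each $t\in D$, the factor $G_{N_t}$ of $G_T$, and the two conditions in Lemma~\ref{lem:stability} all split over $t\in D$, I would reduce to $D=\{0\}$, write $N$ for the (now nilpotent) $T$, and drop the subscript $t$. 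A tangent vector $(\dot Q,\dot P)$ lies in $\Ker d_x\Phi_T$ precisely when $\dot Q\,N^{j}P+Q\,N^{j}\dot P=0$ for $j=0,\dots,k-1$, equivalently $\dot Q\circ\Theta=-Q\circ\Theta'$, where $\Theta,\Theta'\colon V^{\oplus k}\to W$ send $(v_0,\dots,v_{k-1})$ to $\sum_j N^{j}Pv_j$ and $\sum_j N^{j}\dot Pv_j$ respectively. From Lemma~\ref{lem:stability} I would extract, by short inductions, that $\range P+\range N=W$ forces $W=\sum_{j=0}^{k-1}N^{j}\range P$ (i.e.\ $\Theta$ is onto) and that $\Ker Q\cap\Ker N=0$ forces $\bigcap_{j=0}^{k-1}\Ker(QN^{j})=0$.

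The plan is then to produce $\xi\in\End(W)$ commuting with $N$ with $(\dot Q,\dot P)=(-Q\xi,\xi P)$ by setting $\xi\circ\Theta=\Theta'$; surjectivity of $\Theta$ makes this the only candidate, so the whole point is to check $\Ker\Theta\subseteq\Ker\Theta'$. For that I would use the shift $\sigma\colon(v_0,\dots,v_{k-1})\mapsto(0,v_0,\dots,v_{k-2})$, for which $N\Theta=\Theta\sigma$ and $N\Theta'=\Theta'\sigma$ (using $N^{k}=0$): if $\Theta(v)=0$ then $\Theta(\sigma^{i}v)=0$ for all $i$, and feeding $\sigma^{i}v$ into $\dot Q\Theta=-Q\Theta'$ gives $QN^{i}\Theta'(v)=0$ for all $i$, whence $\Theta'(v)\in\bigcap_i\Ker(QN^{i})=0$. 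Once $\xi$ is well defined, the identities $\xi P=\dot P$, $N\xi=\xi N$ and $Q\xi=-\dot Q$ all follow formally from $\xi\Theta=\Theta'$, $N\Theta=\Theta\sigma$, $\dot Q\Theta=-Q\Theta'$ and the surjectivity of $\Theta$, so $(\dot Q,\dot P)$ is tangent to the $G_T$-orbit, completing the displayed equality and hence the lemma. The well-definedness check is the only substantive step; everything else is bookkeeping.
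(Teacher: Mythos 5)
Your argument is correct, and it reaches the conclusion by a genuinely different route at the one substantive point. The outer layer is the same as the paper's: both proofs use the moment-map identities $\Ker d_x\mu_T=T_x(G_T\cdot x)^\omega$ and $\Ker d_x\Phi_T=T_x\bigl(G_\vek(V)\cdot x\bigr)^\omega$, reduce the lemma (via double symplectic orthogonal) to the single equality $\Ker d_x\Phi_T=T_x(G_T\cdot x)$ on $\bM(V,W)^{T\st}$, and get constancy of the dimension from freeness of the $G_T$-action (Proposition~\ref{prop:geom},~(a)); your surjectivity-of-$d_x\mu_T$ phrasing and the paper's count $\dim\bM(V,W)-\dim G_T$ are the same computation. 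The difference is how that equality is obtained. The paper simply quotes Proposition~\ref{prop:geom},~(b), i.e.\ the Poisson embedding $\bM(V,W)^{T\st}/G_T\hookrightarrow\EE_\vek(V)$, which rests on Proposition~\ref{prop:unique} and hence on the invariant-theoretic input (Lusztig's generators of quiver invariants, Le~Bruyn--Procesi semisimplicity, closedness of the relevant orbits). You instead prove the infinitesimal statement directly: after the legitimate reduction to $D=\{0\}$, you encode $\Ker d_x\Phi_N$ by $\dot Q N^jP+QN^j\dot P=0$, extract from Lemma~\ref{lem:stability} that $\Theta$ is onto and $\bigcap_j\Ker(QN^j)=0$, and build the commuting $\xi$ with $(\dot Q,\dot P)=(-Q\xi,\xi P)$ via the shift identity $N\Theta=\Theta\sigma$; all the verifications ($\Ker\Theta\subseteq\Ker\Theta'$, $\xi P=\dot P$, $N\xi=\xi N$, $Q\xi=-\dot Q$) check out. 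What your route buys is self-containedness for this lemma: it needs only the elementary stability criterion of Lemma~\ref{lem:stability}, and it amounts to an independent proof of the injectivity of $d(\Phi_T)$ transverse to the $G_T$-orbits, i.e.\ the infinitesimal part of Proposition~\ref{prop:geom},~(b). What the paper's route buys is economy and extra strength: Proposition~\ref{prop:unique} is needed elsewhere anyway (e.g.\ for the global, set-theoretic injectivity in Theorem~\ref{thm:geom} and for the uniqueness statements), so once it is available the lemma is three lines, whereas your computation only recovers the tangent-level statement.
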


\begin{proof}
The moment map equations for $\Phi_T$ and $\mu_T$ imply
\[
\Ker d_x \mu_T = T_x (G_T \cdot x)^\omega, \quad
\Ker d_x \Phi_T = T_x \bigl( G_\vek(V) \cdot x \bigr)^\omega,
\]
where the superscript $\omega$ means the symplectic orthogonal complement subspace. 
On the other hand, Proposition~\ref{prop:geom}, (b) implies 
\[
\Ker d_x \Phi_T = T_x (G_T \cdot x).
\]
Hence 
\[
\Ker d_x \mu_T = T_x (G_T \cdot x )^\omega 
= (\Ker d_x \Phi_T)^\omega = T_x \bigl( G_\vek(V) \cdot x \bigr),
\]
whose dimension is constant on $\bM(V,W)^{T\st}$ by Proposition~\ref{prop:geom}, (a).
\end{proof}

\begin{remark}\label{rem:duality}
The above lemma implies that for any $x \in \bM(V,W)^{T\st}$, 
the subspaces $\Ker d_x \mu_T$ and $\Ker d_x \Phi_T$ 
are symplectic orthogonal complement to each other, 
namely the pair $(\mu_T, \Phi_T)$ is a {\em dual pair} of moment maps 
in the sense of Weinstein~\cite{Wein}, 
as mentioned by Harnad~\cite{Harnad}.
\end{remark}

\begin{theorem}\label{thm:geom}
Suppose $V \neq 0$.
Then for any $G_\vek (V)$-coadjoint orbit $\bO$, 
there exist a finite-dimensional $\C$-vector space $W$, 
an endomorphism $T$ of $W$, and 
a $G_T$-coadjoint orbit $\OO \subset \g_T^*$ such that 
the map $\Phi_T$ induces a symplectomorphism 
\[
\mu_T^{-1}(\OO)^{T\st} /G_T \xrightarrow{\simeq} \bO.
\]
Moreover the choice of $(W, T, \OO)$ 
is unique in the following sense: if another triple 
$(W',T',\OO')$ has the same properties, then 
there exists an isomorphism $f \colon W \to W'$ such that
$fTf^{-1}=T'$ and $f \OO f^{-1}=\OO'$.
\end{theorem}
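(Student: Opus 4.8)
The plan is to recognise the claimed isomorphism as an instance of symplectic (orbit) reduction attached to the dual pair $(\mu_T,\Phi_T)$ of Lemma~\ref{lem:duality} and Remark~\ref{rem:duality}, and to deduce uniqueness from Proposition~\ref{prop:unique}. For the uniqueness part: if $(W,T,\OO)$ and $(W',T',\OO')$ both satisfy the conclusion, fix $A(z)\in\bO$; since $\Phi_T$, resp.\ $\Phi_{T'}$, maps $\mu_T^{-1}(\OO)^{T\st}$, resp.\ $\mu_{T'}^{-1}(\OO')^{T'\st}$, onto $\bO$, there are points $(Q,P)\in\mu_T^{-1}(\OO)^{T\st}$ and $(Q',P')\in\mu_{T'}^{-1}(\OO')^{T'\st}$ with $\Phi_T(Q,P)=A(z)=\Phi_{T'}(Q',P')$, giving stable data $(V,W,T,Q,P)$ and $(V,W',T',Q',P')$. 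By Proposition~\ref{prop:unique} there is an isomorphism $f\colon W\to W'$ with $Q'=Qf^{-1}$, $P'=fP$, $T'=fTf^{-1}$; this $f$ conjugates $G_T$ onto $G_{T'}$ and, because conjugation by the block components of $f$ intertwines the projections $p_t$ with their primed counterparts, sends $\mu_T(Q,P)$ to $\mu_{T'}(Q',P')$. Since $\mu_T(Q,P)\in\OO$ while $\OO'=G_{T'}\cdot\mu_{T'}(Q',P')$, this forces $f\OO f^{-1}=\OO'$, which is the asserted uniqueness.

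For existence, fix $A(z)\in\bO$ and, by Proposition~\ref{prop:existence}, a stable datum $(V,W,T,Q,P)$ with $\Phi_T(Q,P)=A(z)$; set $x=(Q,P)$, $\nu=\mu_T(x)$, and let $\OO$ be the $G_T$-coadjoint orbit through $\nu$. By Proposition~\ref{prop:geom}~(a) the $G_T$-action on $\bM(V,W)^{T\st}$ is free and proper with moment map $\mu_T$, so $\mu_T$ is a submersion there (trivial stabilisers) and $S:=\mu_T^{-1}(\OO)^{T\st}/G_T$ carries a reduced symplectic form. The map $\Phi_T$ is invariant under $G_T$ (its elements commute with $T$, hence fix $Q(z\,\unit_W-T)^{-1}P$), it is $G_\vek(V)$-equivariant, and $\mu_T^{-1}(\OO)^{T\st}$ is $G_\vek(V)$-stable by Lemma~\ref{lem:invariant} together with Proposition~\ref{prop:geom}~(d); as the two actions commute, $\Phi_T$ descends to a $G_\vek(V)$-equivariant map $\bar\Phi\colon S\to\EE_\vek(V)$, which by Proposition~\ref{prop:geom}~(b) is the restriction of the Poisson embedding $\bM(V,W)^{T\st}/G_T\hookrightarrow\EE_\vek(V)$ and is in particular injective.

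I would then invoke the standard description of reduced Poisson manifolds: the connected components of $S=\mu_T^{-1}(\OO)^{T\st}/G_T$ are symplectic leaves of $\bM(V,W)^{T\st}/G_T$, a Poisson embedding carries each symplectic leaf symplectomorphically onto an open piece of a symplectic leaf of the target, and the symplectic leaves of $\EE_\vek(V)\cong\g_\vek^*(V)$ are precisely the $G_\vek(V)$-coadjoint orbits with their Kirillov--Kostant--Souriau forms. Hence $\bar\Phi$ carries each connected component of $S$ symplectomorphically onto a $G_\vek(V)$-invariant open subset of a coadjoint orbit, which by transitivity is the whole orbit; distinct components go to distinct orbits by injectivity of $\bar\Phi$; and the component of $S$ through $[x]$ maps onto $\bO=G_\vek(V)\cdot A(z)$. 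Therefore, once $S$ is shown to be connected, $\bar\Phi\colon S\xrightarrow{\simeq}\bO$ is the required symplectomorphism, and the triple $(W,T,\OO)$ is unique by the first paragraph.

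The connectedness of $S$ is the main obstacle. Since $\bM(V,W)^{T\st}$ is $G_T$-stable and $\mu_T^{-1}(\OO)^{T\st}=G_T\cdot\mu_T^{-1}(\nu)^{T\st}$ with $G_T$ connected, it suffices to prove that the fibre $\mu_T^{-1}(\nu)^{T\st}$ is connected; and by Lemma~\ref{lem:duality} the $G_\vek(V)$-orbits are open — hence open and closed — in this fibre, so, $G_\vek(V)$ being connected, connectedness is equivalent to transitivity of the $G_\vek(V)$-action on $\mu_T^{-1}(\nu)^{T\st}$. I would establish this by decomposing each $W_t$ according to the Jordan type of the nilpotent $N_t$ (cf.\ Remark~\ref{rem:oshima-normal}), which identifies $\mu_T^{-1}(\nu)^{T\st}$ with the stable locus of a moment-map fibre for a suitable star-shaped quiver framed at the central vertex $\C=V$, a space known to be connected; carrying out this identification and matching the stability condition of Lemma~\ref{lem:stability} with the quiver-theoretic one is the technical heart of the argument.
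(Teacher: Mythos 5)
Your set-up (reduction by the dual pair, the Poisson embedding of Proposition~\ref{prop:geom}~(b), and uniqueness via Proposition~\ref{prop:unique}) agrees with the paper, and your reduction of the problem to transitivity of the $G_\vek(V)$-action on a single fibre $\mu_T^{-1}(\nu)^{T\st}$ (equivalently, its connectedness, since Lemma~\ref{lem:duality} makes the orbits open in the smooth fibre) is exactly where the paper also lands. But that step is the whole content of the theorem beyond formalities, and you do not prove it: you only announce that a Jordan-type decomposition of $N_t$ should identify $\mu_T^{-1}(\nu)^{T\st}$ with the stable locus of a moment-map fibre for a framed star-shaped quiver, ``a space known to be connected.'' This is a genuine gap. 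First, the identification is not routine: here $N_t$ is \emph{fixed} (not a variable of the representation space), and the constraint is only $p_T(-PQ)=\nu$, i.e.\ the $\g_T^*$-component of $-PQ$ relative to the centralizer of a fixed nilpotent, which is not literally a $\GL(\bW)$- or star-shaped-quiver moment-map equation. Second, even granting an identification, connectedness (or irreducibility) of stable loci of quiver moment-map fibres is not an unconditional off-the-shelf fact — the available irreducibility theorems (e.g.\ Crawley-Boevey) carry root-theoretic hypotheses on the dimension vector and the parameter — so citing it without verifying hypotheses does not close the argument.

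For comparison, the paper proves irreducibility of $\mu_T^{-1}(\alpha)^{T\st}$ directly, by projecting to $\Hom(W,V)$: each nonempty fibre of the projection is a Zariski-open subset of an affine subspace (the equation $p_T(-PQ)=\alpha$ is affine-linear in $P$ for fixed $Q$, and stability is open by Lemma~\ref{lem:stability}); Lemma~\ref{lem:transitive} (transitivity of $G_k(V)$ on the $Q$'s with $Q|_{\Ker N}$ injective) shows the projection is dominant with fibres of constant dimension, since $g\in G_\vek(V)$ carries the fibre over $Q$ to the fibre over $g\cdot Q$; and then the criterion quoted from \cite[Lemma 6.1]{Cra-geom} (pure-dimensional source, dominant map to an irreducible base with irreducible fibres of constant dimension) gives irreducibility, hence transitivity. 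If you want to salvage your route, you would need either to carry out and justify the quiver identification together with a connectedness theorem that actually applies, or to replace that sketch by an explicit argument such as the paper's projection argument.
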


\begin{proof}
Take any $A(z) \in \bO$ and then 
a stable datum $(V,W,T,Q,P)$ satisfying $A(z)=\Phi_T(Q,P)$. 
Let $\OO \subset \g_T^*$ be 
the coadjoint orbit through $\mu_T(Q,P)$. 
Then by Proposition~\ref{prop:geom},~(d) and Lemma~\ref{lem:invariant}, 
the subset $\mu_T^{-1}(\OO)^{T\st}$ is $G_\vek(V)$-invariant 
and hence by equivariance, 
its image under $\Phi_T$ is a $G_\vek(V)$-invariant subset 
of $\g_\vek^*(V)$ containing $\bO$.
To see that the image actually coincides with $\bO$, 
we have to show that the induced $G_\vek(V)$-action on 
$\mu_T^{-1}(\OO)^{T\st}/G_T$ is transitive, or equivalently, 
the restricted $G_\vek(V)$-action on $\mu_T^{-1}(\alpha)^{T\st}$, 
where $\alpha \in \OO$, is transitive. 

Proposition~\ref{prop:geom}, (a) implies that 
$\mu_T^{-1}(\alpha)^{T\st}$ is a pure-dimensional smooth subvariety, 
and in which, by Lemma~\ref{lem:duality}, 
any $G_\vek(V)$-orbit is a Zariski open subset. 
Therefore in order to show the transitivity of the action, it is sufficient to show that 
$\mu_T^{-1}(\alpha)^{T\st}$ is an irreducible variety. 

To see that $\mu_T^{-1}(\alpha)^{T\st}$ is irreducible,
consider the first projection 
$\varphi \colon \mu_T^{-1}(\alpha)^{T\st} \to \Hom(W,V)$.
By the definition of $\mu_T$ and Lemma~\ref{lem:stability}, 
we see that any nonempty fiber of $\varphi$ is 
a Zariski open subset of an affine space. 
Moreover Lemma~\ref{lem:transitive} below shows that 
$\varphi$ is a dominant morphism and 
every nonempty fibers have the same dimension, 
because any $g \in G_\vek(V)$ induces an isomorphism between 
the fibers $\varphi^{-1}(Q)$ and $\varphi^{-1}(g \cdot Q)$. 
Therefore we may apply the following fact (c.f.~\cite[Lemma 6.1]{Cra-geom}) 
to deduce that $\mu_T^{-1}(\alpha)^{T\st}$ is irreducible:
\begin{quotation}\em
If $X$ is a pure-dimensional scheme, 
$Y$ is an irreducible scheme and
$f \colon X \to Y$ is a dominant morphism with all fibers 
irreducible of constant dimension, then $X$ is irreducible.
\end{quotation} 

The uniqueness assertion immediately 
follows from Proposition~\ref{prop:unique}.
\end{proof}

\begin{lemma}\label{lem:transitive}
Let $V,W$ be two $\C$-vector spaces and let $N \in \End(W)$ 
be a nilpotent endomorphism of $W$ with $N^k=0$. 
Define an action of $G_k(V)$ on $\Hom(W,V)$ as in \eqref{eq:action}.
Then the restricted action on the subset 
\[
\{\, Q \in \Hom(W,V) \mid \text{$Q |_{\Ker N}$ is injective}\,\}
\]
is transitive.
\end{lemma}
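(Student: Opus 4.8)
The plan is to exhibit one explicit point $Q_0$ in the subset in question and show that its $G_k(V)$-orbit is the whole subset; since the subset is $G_k(V)$-invariant, transitivity follows. If the subset is empty there is nothing to prove, so assume it is nonempty; then $m:=\dim\Ker N\le\dim V$, and I fix once and for all linearly independent vectors $f_1,\dots,f_m\in V$.

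First I would introduce coordinates via a Jordan basis for $N$: write $W=\bigoplus_{a=1}^m C_a$ with each $C_a=\C[N]e_a$ cyclic of dimension $d_a$, where $1\le d_a\le k$ and $N^{d_a}e_a=0$, so that $\Ker N=\bigoplus_a\C\,N^{d_a-1}e_a$. A point $Q\in\Hom(W,V)$ is then the same as the tuple $v_{a,j}:=QN^je_a$ for $0\le j\le d_a-1$ (with $v_{a,l}:=0$ for $l\ge d_a$), and $Q|_{\Ker N}$ is injective exactly when $\{v_{a,d_a-1}\}_{a=1}^m$ is linearly independent. From \eqref{eq:action} one computes $(g\cdot Q)N^je_a=\sum_{i\ge0}g_iv_{a,j+i}$ for $g(z)=\sum_ig_iz^i\in G_k(V)$; in particular $(g\cdot Q)|_{\Ker N}=g_0\circ(Q|_{\Ker N})$, which both records the asserted $G_k(V)$-invariance of the subset and shows that the ``top'' entries $v_{a,d_a-1}$ transform by the linear action of $g_0\in\GL(V)$.

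Next I would reduce an arbitrary $Q$ in the subset to the fixed point $Q_0$ defined by $Q_0N^je_a=0$ for $j<d_a-1$ and $Q_0N^{d_a-1}e_a=f_a$. Stage one: choose $g_0\in\GL(V)\subset G_k(V)$ with $g_0v_{a,d_a-1}=f_a$ for all $a$ (possible since both $m$-tuples are linearly independent) and replace $Q$ by $g_0\cdot Q$, so that now every top entry equals $f_a$. Stage two: inductively for $i=1,2,\dots,k-1$, act by $\unit_V+g_iz^i\in G_k(V)$ with $g_i\in\End(V)$ chosen so that $g_if_a=-v^{\mathrm{cur}}_{a,d_a-1-i}$ for every $a$ with $d_a-1\ge i$ (possible because those $f_a$ form part of a linearly independent family; the superscript denotes the current value). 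Applying $\unit_V+g_iz^i$ sends $v_{a,j}$ to $v_{a,j}+g_iv_{a,j+i}$, which (i) fixes every entry of degree $<i$ in its Jordan string — in particular all top entries and all entries cleared at earlier stages — because then $j+i\ge d_a$ and $v_{a,j+i}=0$; and (ii) turns the degree-$i$ entry $v_{a,d_a-1-i}$ into $v^{\mathrm{cur}}_{a,d_a-1-i}+g_if_a=0$. After stage $i=k-1$ every entry below the top vanishes, so $Q$ has become $Q_0$. Composing all the elements used produces a single $g\in G_k(V)$ with $g\cdot Q=Q_0$; for two points $Q,Q'$ of the subset this gives $Q'=(g')^{-1}g\cdot Q$, which is the claimed transitivity.

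The computation is elementary, and the only delicate point is the bookkeeping in stage two, namely verifying that clearing the degree-$i$ layer leaves the previously cleared layers untouched — which is precisely the observation that $N^i$ annihilates the top $i$ vectors of each Jordan string.
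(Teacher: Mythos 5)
Your proof is correct. The only points that needed care — that an element $\unit_V+g_iz^i$ leaves the top entries $v_{a,d_a-1}$ and all previously cleared layers untouched because $N^{i}$ kills the top $i$ vectors of each Jordan string, that the relevant $f_a$ with $d_a-1\ge i$ form part of a linearly independent family so the required $g_i\in\End(V)$ exists, and that $(g\cdot Q)|_{\Ker N}=g_0\circ (Q|_{\Ker N})$ gives invariance of the subset — are all checked, and the final passage from ``every orbit contains $Q_0$'' to transitivity is legitimate since \eqref{eq:action} is a genuine group action. Your route differs from the paper's in a mild but real way: you choose a Jordan basis for $N$ and normalize an arbitrary $Q$ to an explicit canonical representative $Q_0$ (all entries below the socle level sent to $0$, the socle vectors to a fixed independent set), whereas the paper argues basis-free, solving the equation $\sum_i g_iQN^i=Q'$ for two given points directly along the filtration $\Ker N\subset\Ker N^2\subset\cdots$ and finding $g_0,g_1,\dots$ inductively from the injectivity of the maps induced by $QN^i$ on $\Ker N^{i+1}/\Ker N^i$. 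The underlying mechanism is the same in both arguments (build the group element coefficient by coefficient in $z$, using injectivity on $\Ker N$ at each depth); what your version buys is an explicit normal form for the orbit, at the cost of fixing a Jordan basis and some bookkeeping, while the paper's version is shorter, coordinate-free, and produces directly the element carrying $Q$ to $Q'$.
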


\begin{proof}
Let $Q, Q' \in \Hom(W,V)$ and 
assume that both $Q |_{\Ker N}$ and $Q'|_{\Ker N}$ are injective. 
Then we solve the equation
\begin{equation}\label{eq:transitive}
\sum_{i=0}^{k-1} g_i Q N^i = Q', \quad g(z)=\sum_i g_i z^i \in G_k (V).
\end{equation}
First we restrict the both sides to $\Ker N$. 
Then we have
\[
g_0 Q |_{\Ker N} = Q' |_{\Ker N}.
\]
By the assumption we can find $g_0 \in \GL(V)$ satisfying the above.
Next, we restrict the both sides of \eqref{eq:transitive} to $\Ker N^2$. Then we have
\[
(g_0 Q + g_1 Q N) |_{\Ker N^2} = Q' |_{\Ker N^2},
\]
or equivalently,
\[
g_1 Q N |_{\Ker N^2} = (Q' -g_0 Q )|_{\Ker N^2}.
\]
Because $Q |_{\Ker N}$ is injective, the kernel of 
$Q N |_{\Ker N^2}$ is just $\Ker N$. 
Also, we have $Q' - g_0 Q |_{\Ker N} =0$. 
Hence the both sides of the above equation descend to homomorphisms 
from $\Ker N^2 /\Ker N$;
\[
g_1 Q N |_{\Ker N^2 /\Ker N} = (Q' -g_0 Q )|_{\Ker N^2 /\Ker N},
\]
and we can find $g_1 \in \End(W)$ satisfying the above 
as $Q N |_{\Ker N^2 /\Ker N}$ is injective.

Iterating this argument inductively, 
we find $g_i \in \End(W),\, i=0,1, \dots, k-1$ 
with $\det g_0 \neq 0$, satisfying the equation 
\[
g_i Q N^i |_{\Ker N^{i+1}} 
= \left. \left( Q' - \sum_{j=0}^{i-1} g_j Q N^j \right) \right|_{\Ker N^{i+1}},
\]
and finally we obtain a desired $g(z) \in G_k(V)$.
\end{proof}

\section{Harnad duality}\label{sec:duality}

In this section we formulate the `Harnad dual' in a categorical setting, 
and introduce some important properties of it.

\begin{definition}
We call a sextuple $(V,W,S,T,Q,P)$ consisting of:
\begin{itm}
\item two finite-dimensional $\C$-vector spaces $V,W$;
\item $(S,T) \in \End(V) \oplus \End(W)$; and
\item $(Q,P) \in \bM(V,W)$,
\end{itm}
as a {\em Harnad datum}.
\end{definition}

We always assume our Harnad data $(V,W,S,T,Q,P)$ 
satisfy Condition~\eqref{eq:T} and
\begin{equation}\label{eq:S}
\prod_{s \in E} (S-s\,\unit_V)^{l_s}=0,
\end{equation}
where the zero datum $(0,0,0,0,0,0)$ is understood to
satisfy these two conditions.

Harnad data $(V,W,S,T,Q,P)$ 
(satisfying \eqref{eq:T} and \eqref{eq:S})
form an abelian category, which we denote by $\lHk$.
A morphism from $(V,W,S,T,Q,P)$ to $(V',W',S',T',Q',P')$
is a pair $(f_V,f_W)$ of linear maps 
$f_V \colon V \to V'$ and $f_W \colon W \to W'$
satisfying
\begin{gather*}
\begin{aligned}
f_V S &= S' f_V, \\
f_V Q &= Q' f_W, 
\end{aligned}\quad
\begin{aligned}
f_W T &= T' f_W, \\
f_W P &= P' f_V.
\end{aligned}
\end{gather*}

Now consider systems of linear ordinary differential equations of the form
\[
\frac{du}{dz}=A(z)u, \quad
A(z) = S + \sum_{t \in D} \sum_{k=1}^{k_t} \frac{A_{t,k}}{(z-t)^k} 
\in \gl(V) \oplus \EE_\vek(V),
\]
such that $S=\lim_{z \to \infty} A(z)$ satisfies \eqref{eq:S}.
We define an abelian category $\lDk$ by 
\begin{itm}
\item an object of $\lDk$ is a pair $(V,A)$ consisting of
a finite-dimensional $\C$-vector space $V$ and  
a system $A(z) \in \gl(V) \oplus \EE_\vek (V)$ 
such that $\lim_{z \to \infty} A(z)$ satisfies \eqref{eq:S};
\item a morphism from $(V,A)$ to $(V',A')$, 
where $A(z)=S+\sum A_{t,k}(z-t)^{-k},\, A'(z)=S'+\sum A'_{t,k}(z-t)^{-k}$, 
is a linear map 
$f \colon V \to V'$ satisfying $f \circ S = S' \circ f$ and
$f \circ A_{t,k} = A'_{t,k} \circ f$ for all $t,k$.
\end{itm}
Then it is easy to see that
\[
\Phi \colon (V,W,S,T,Q,P) \mapsto \bigl( V,S + \Phi_T(Q,P) \bigr) 
=\bigl( V,S+Q(z\,\unit_W-T)^{-1}P \bigr)
\]
defines a functor from $\lHk$ to $\lDk$ 
(the map between the sets of morphisms 
is given by the projection $(f_V,f_W) \mapsto f_V$).
The notion of canonical datum 
(Definition~\ref{dfn:canonical}) gives a `section' of $\Phi$;
for $(V,A) \in \lDk$,
letting $S :=\lim_{z \to \infty} A(z)$ 
and $(V,W,T,Q,P)$ be the canonical datum for 
the $\EE_\vek(V)$-component of $A(z)$, set
\[
\kappa(V,A) := (V,W,S,T,Q,P).
\]
By the construction of the canonical datum, 
it then defines a functor from $\lDk$ to $\lHk$ 
and satisfies $\Phi \circ \kappa = \unit$.
Note that there is an equivalence of categories
\[
\sigma \colon \lHk \to \kHl; \quad (V,W,S,T,Q,P) \mapsto (W,V,T,S,P,Q),
\]
which together with $\Phi$ induces
\begin{align*}
\Phi \circ \sigma (V,W,S,T,Q,P) 
&= \bigl( W,T + P(\zeta\,\unit_V -S)^{-1}Q \bigr) \\ 
&= \bigl( W, T-\Psi_S(Q,P) \bigr),
\end{align*}
where $\zeta$ denotes the indeterminate for $\kDl$.

\begin{definition}\label{dfn:dual}
We call 
\[
\HD := \Phi \circ \sigma \circ \kappa \colon \kDl \to \lDk
\]
as the {\em Harnad dual} functor.
\end{definition}

Here we give two simple examples.

\begin{example}\label{ex:dual}
(a) If any $\EE_{k_t}(V)$-component of 
$A(z)$ is zero (i.e., $A(z)=S$), 
then the corresponding canonical datum is zero, whence $\HD(V,S)=(0,0)$.

(b) Let us compute the Harnad dual $\HD(V,A)$ 
in the case $(V,A)=(\C,s+\alpha)$ for some 
$s \in E,\,\alpha(z)=\sum \alpha_{t,j} (z-t)^{-j} \in \EE_\vek(\C)$.
Set $d_t:=\ord(\alpha_t)$ and 
let $(\C,W,T,Q,P)$ be the canonical datum for $\alpha(z)$,
which defines $(W_t,N_t,Q_t,P_t)$ for each $t \in D$ as usual.
Then Remark~\ref{rem:canonical} shows that 
$W_t=\C^{d_t}$ (which is understood as zero if $d_t=0$) and 
\begin{gather*}
Q_t = \begin{pmatrix} \alpha_{t,d_t} & \alpha_{t,d_t-1} & \cdots & \alpha_{t,1}
        \end{pmatrix} \in \Hom(\C^{d_t},\C), \\ 
P_t = \left(\, \begin{matrix} 0 \\ \vdots \\ 0 \\ 1
       \end{matrix} \,\right) \in \Hom(\C,\C^{d_t}), \quad
N_t = \left(\,\begin{matrix}
                        0  & \quad 1 \quad &        & \quad 0 \\
                           & 0             & \ddots &   \\
                           &               & \ddots & \quad 1   \\
                        0  &               &        & \quad 0 
\end{matrix} \,\right)
\in \End(\C^{d_t}).
\end{gather*}
For $t, t' \in D$, let
\[
R_{t',t} := P_{t'} Q_t =
\left(\,\begin{matrix}
                        0  & 0              & \cdots &  0 \\
                    \vdots & \vdots         &        &  \vdots \\
                        0  & 0              & \cdots &  0   \\
            \alpha_{t,d_t} & \alpha_{t,d_t-1} & \cdots & \alpha_{t,1} 
\end{matrix} \,\right) \in \Hom(\C^{d_t},\C^{d_{t'}}).
\]
Then the Harnad dual $(W,B)=\HD(\C,s+\alpha)$ is described as
\[
B(\zeta) = T + \frac{R}{\zeta-s}, \quad 
R=PQ=(R_{t',t})_{t',t \in D}.
\]
\end{example}

Considering the sub-objects, we define the following:
\begin{definition}
{\rm (a)} A {\em subrepresentation} of a Harnad datum $(V,W,S,T,Q,P)$ 
is a pair $(X,Y)$ of subspaces $X \subset V,\, Y \subset W$
satisfying 
\[
S(X) \subset X, \quad T(Y) \subset Y, \quad
Q(Y) \subset X, \quad P(X) \subset Y.
\]
A nonzero Harnad datum $(V,W,S,T,Q,P)$ is said to be {\em irreducible}
if it has no nonzero proper subrepresentations.

{\rm (b)} A pair $(V,A) \in \lDk$ with $V \neq 0$ 
is said to be {\em irreducible}
if $V$ has no nonzero proper subspace 
preserved by all $A_{t,k}$ and $S$. 
\end{definition}

Any subrepresentation of a Harnad datum $(V,W,S,T,Q,P)$ 
is also that of the datum $(V,W,T,Q,P)$. 

\begin{lemma}\label{lem:irr-st}
If a Harnad datum $(V,W,S,T,Q,P) \in \lHk$ is irreducible and $V \neq 0$, 
then the datum $(V,W,T,Q,P)$ is stable.
\end{lemma}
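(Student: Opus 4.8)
The plan is to prove the contrapositive: assuming $(V,W,T,Q,P)$ is \emph{not} stable, I will produce a nonzero proper subrepresentation of the Harnad datum $(V,W,S,T,Q,P)$, contradicting irreducibility. Since $V \neq 0$, failure of stability (Definition~\ref{dfn:stable}) gives a subrepresentation $(X,Y)$ of $(V,W,T,Q,P)$ with either $X=0$, $Y\neq 0$, or $X=V$, $Y\neq W$. In either case $(X,Y)$ satisfies $P(X)\subset Y$, $Q(Y)\subset X$, $T(Y)\subset Y$, but to be a subrepresentation of the \emph{Harnad} datum we need in addition $S(X)\subset X$. In the first case $X=0$, so $S(X)\subset X$ holds trivially and $(0,Y)$ is already a nonzero proper subrepresentation of the Harnad datum — done. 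In the second case $X=V$, so $S(X)\subset X$ again holds trivially, and $(V,Y)$ with $Y\neq W$ is a nonzero proper subrepresentation of the Harnad datum — done.

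So the argument is essentially immediate once one unwinds the definitions: the two offending subrepresentations guaranteed by non-stability have $X\in\{0,V\}$, and for those extreme values the extra $S$-invariance condition imposed by the Harnad structure is automatic. The key observations to spell out are (i) that a subrepresentation of $(V,W,T,Q,P)$ in the sense of the datum-subrepresentation definition is, when $X$ is $0$ or all of $V$, automatically a subrepresentation of the sextuple $(V,W,S,T,Q,P)$; and (ii) that such a subrepresentation is \emph{nonzero} (because $Y\neq 0$ in case one, or $X=V\neq 0$ in case two) and \emph{proper} (because $X=0\neq V$ in case one, or $Y\neq W$ in case two). These together contradict the hypothesized irreducibility of the Harnad datum, so $(V,W,T,Q,P)$ must be stable.

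There is essentially no obstacle here — the statement is a soft bookkeeping consequence of the two notions of subrepresentation, and the only thing to be careful about is tracking which of the two degenerate cases ($X=0$ versus $X=V$) one is in and checking that in each the resulting pair is genuinely nonzero and proper as a subrepresentation of the sextuple. I would write this out as a short three- or four-line proof by contradiction, invoking Definition~\ref{dfn:stable} for the dichotomy and the remark (already in the text) that any subrepresentation of the Harnad datum is one of the underlying quiver datum — here used in the reverse-friendly direction for the extreme values of $X$.

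\begin{proof}
We prove the contrapositive. Suppose $(V,W,T,Q,P)$ is not stable. Since $V \neq 0$, by Definition~\ref{dfn:stable} there is a subrepresentation $(X,Y)$ of $(V,W,T,Q,P)$ with either $X=0$ and $Y \neq 0$, or $X=V$ and $Y \neq W$. In both cases $X$ is $0$ or all of $V$, so $S(X) \subset X$ holds automatically, and hence $(X,Y)$ is a subrepresentation of the Harnad datum $(V,W,S,T,Q,P)$ as well. In the first case $(0,Y)$ is nonzero (as $Y \neq 0$) and proper (as $0 \neq V$); in the second case $(V,Y)$ is nonzero (as $V \neq 0$) and proper (as $Y \neq W$). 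Either way $(V,W,S,T,Q,P)$ admits a nonzero proper subrepresentation, so it is not irreducible.
\end{proof}
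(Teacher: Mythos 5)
Your proof is correct and is essentially the paper's argument, merely phrased contrapositively: both rest on the single observation that for $X=0$ or $X=V$ the condition $S(X)\subset X$ is automatic, so a destabilizing subrepresentation of $(V,W,T,Q,P)$ is already a nonzero proper subrepresentation of the Harnad datum, contradicting irreducibility.
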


\begin{proof}
Suppose that a subrepresentation $(X,Y)$ of the datum $(V,W,T,Q,P)$ 
satisfies $X=0$ or $X=V$. Then $S(X) \subset X$, and hence 
the pair $(X,Y)$ is a subrepresentation of $(V,W,S,T,Q,P)$.
Therefore the irreducibility of $(V,W,S,T,Q,P)$ implies 
the stability of $(V,W,T,Q,P)$.
\end{proof}

\begin{lemma}\label{lem:irred}
Let $(V,W,S,T,Q,P) \in \lHk$ with $V \neq 0$ and 
suppose that $(V,W,T,Q,P)$ is stable. 
Then $(V,W,S,T,Q,P)$ is irreducible if and only if 
$\Phi(V,W,S,T,Q,P) \in \lDk$ is irreducible.
In particular, a pair $(V,A) \in \lDk$ with $V \neq 0$ is irreducible 
if and only if 
$\kappa(V,A) \in \lHk$ is irreducible.
\end{lemma}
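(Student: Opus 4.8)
The plan is to prove the equivalence between irreducibility of the Harnad datum $(V,W,S,T,Q,P)$ and that of its image $\Phi(V,W,S,T,Q,P) = (V, S + \Phi_T(Q,P))$, under the standing hypothesis that $(V,W,T,Q,P)$ is stable. The key observation is that a subrepresentation of $(V,W,S,T,Q,P)$ consists of a pair $(X,Y)$ with $X \subset V$, $Y \subset W$, and the stability hypothesis should let us recover $Y$ from $X$ canonically, so that the whole combinatorial content of subobjects is carried by the subspace $X \subset V$ alone — which is exactly the data governing irreducibility of $(V,A) \in \lDk$.

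First I would prove the ``only if'' direction. Suppose $(V,W,S,T,Q,P)$ is irreducible and let $X \subset V$ be a nonzero proper subspace preserved by $S$ and all $A_{t,k}$, where $A(z) = S + \Phi_T(Q,P)$; I want to produce a subrepresentation $(X,Y)$ of the Harnad datum, contradicting irreducibility. The natural candidate is to take $Y \subset W$ to be the smallest $T$-invariant subspace containing $P(X)$ — concretely, using the block decomposition $W = \bigoplus W_t$, set $Y_t := \sum_{k \geq 0} N_t^k P_t(X)$, and $Y = \bigoplus Y_t$. Then automatically $T(Y) \subset Y$ and $P(X) \subset Y$; the nontrivial point is $Q(Y) \subset X$, i.e.\ $Q_t N_t^k P_t(X) \subset X$ for all $t,k$. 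But $Q_t N_t^k P_t = A_{t,k+1}$ by the computation of $\Phi_T$ in Section~\ref{sec:setting}, and $X$ is preserved by every $A_{t,k}$ by hypothesis; so $Q(Y) \subset X$ holds. Thus $(X,Y)$ is a subrepresentation, and since $X \neq 0$ and $X \neq V$ I must argue $(X,Y) \neq (0,0)$ (clear, as $X \neq 0$) and $(X,Y) \neq (V,W)$. The latter uses stability: if $Y = W$ then, since $Y$ is by construction generated as a $T$-module by $P(X)$ with $X \neq V$, one shows $(X,Y)$ cannot fill up $W$ unless $X = V$ — more carefully, I would instead simply invoke that the restriction $(X, Y, T|_Y, Q|_Y, P|_X)$ is a proper subrepresentation in the category $\lHk$ because $X$ is proper, which contradicts irreducibility of the Harnad datum regardless of whether $Y = W$. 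So the ``only if'' direction is quick.

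Next the ``if'' direction: suppose $\Phi(V,W,S,T,Q,P) = (V,A)$ is irreducible, and let $(X,Y)$ be a nonzero proper subrepresentation of $(V,W,S,T,Q,P)$; I must derive a contradiction. From $S(X) \subset X$, $Q(Y) \subset X$, $P(X) \subset Y$, $T(Y) \subset Y$ I get $A_{t,k}(X) = Q_t N_t^{k-1} P_t(X) \subset Q_t N_t^{k-1}(Y) \subset Q(Y) \subset X$, so $X$ is preserved by all $A_{t,k}$ and by $S$; irreducibility of $(V,A)$ forces $X = 0$ or $X = V$. In either case $(X,Y)$ is a subrepresentation of the datum $(V,W,T,Q,P)$ with $X \in \{0, V\}$, so the stability hypothesis (Definition~\ref{dfn:stable}) forces $Y = 0$ or $Y = W$ respectively, i.e.\ $(X,Y) \in \{(0,0),(V,W)\}$, contradicting that it was a proper nonzero subrepresentation. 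Hence $(V,W,S,T,Q,P)$ is irreducible.

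Finally, the ``in particular'' clause is an immediate corollary: given $(V,A) \in \lDk$ with $V \neq 0$, the datum $\kappa(V,A) = (V,W,S,T,Q,P)$ has $(V,W,T,Q,P)$ stable by Proposition~\ref{prop:explicit} (the canonical datum is stable), and $\Phi \circ \kappa = \unit$ gives $\Phi(\kappa(V,A)) = (V,A)$; applying the equivalence just proved yields that $(V,A)$ is irreducible iff $\kappa(V,A)$ is. I expect the main subtlety to be the ``only if'' direction — specifically making sure the subrepresentation $(X,Y)$ I construct is genuinely \emph{proper}, which is where stability must be used, rather than naively checking $Y \neq W$; framing the argument directly in terms of proper subobjects of the abelian category $\lHk$ sidesteps this cleanly.
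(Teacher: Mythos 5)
Your proof is correct, and its overall architecture (reduce both directions to the correspondence $X \leftrightarrow (X,Y)$, use stability only in the ``if'' direction, deduce the ``in particular'' clause from Proposition~\ref{prop:explicit} and $\Phi\circ\kappa=\unit$) is the same as the paper's; the ``if'' direction and the final clause coincide with the paper's argument essentially verbatim. The one genuine difference is in the ``only if'' direction: the paper first puts $N_t$ into the normal form of Remark~\ref{rem:oshima-normal}, writes $A_{t,k}=\sum_{j\geq 1}Q_{t,j}P_{t,j+k-1}$ in block components, and takes $Y_{t,k}:=\sum_{j\geq k}P_{t,j}(X)$, whereas you take $Y_t:=\sum_{k\geq 0}N_t^kP_t(X)$, the $N_t$-invariant subspace generated by $P_t(X)$, and verify $Q(Y)\subset X$ directly from the identity $A_{t,k}=Q_tN_t^{k-1}P_t$ coming from the computation of $\Phi_T$ (the terms with $k\geq k_t$ vanish by \eqref{eq:T}). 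Your construction is simpler and avoids the normal-form machinery entirely, at no cost; the paper's choice of $Y$ buys nothing extra for this lemma. One remark: your mid-proof worry that stability might be needed to rule out $Y=W$ in the ``only if'' direction is a red herring, and you correctly discard it --- properness of $(X,Y)$ follows from $X\neq V$ alone, since a subrepresentation equals $(V,W)$ only if its $V$-component is all of $V$; so, exactly as in the paper, stability enters only in the ``if'' direction (and, via Proposition~\ref{prop:explicit}, in the ``in particular'' statement).
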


\begin{proof}
We use the normal form mentioned in Remark~\ref{rem:oshima-normal}, namely,
take decompositions $W_t= \bigoplus_k W_{t,k}$ 
together with injections $\iota \colon W_{t,k} \to W_{t,k-1}$
such that $N_t$ is written as in \eqref{eq:oshima-normal}. 
We regard $W_{t,k} \subset W_{t,k-1}$ using $\iota$.
Note that if we denote by $Q_{t,k}$ (resp.\ $P_{t,k}$) the block components 
of $Q_t$ (resp.\ $P_t$) with respect to the decomposition 
$W_t=\bigoplus_k W_{t,k}$, 
the system $A(z):=S+\Phi_T(Q,P)$ satisfies
\[
A_{t,k} = \sum_{j \geq 1} Q_{t,j} P_{t,j+k-1}.
\]

Now we show the `only if' part.
Suppose that a subspace 
$X \subset V$ is preserved by all $A_{t,k}$ and $S$. 
Then we set
\[
Y_{t,k} := \sum_{j \geq k} P_{t,j}(X) \subset W_{t,k}, 
\quad Y_t := \bigoplus_k Y_{t,k} \subset W_t.
\]
Obviously we have 
$P_t(X) \subset Y_t$ and $N_t(Y_t) \subset Y_t$. 
Moreover, we have
\begin{align*}
Q_t(Y_t) &= \sum_{k\geq 1} Q_{t,k}(Y_{t,k}) \\
&= \sum_{k \geq 1} \sum_{j \geq k} Q_{t,k}P_{t,j}(X) \\
&= \sum_{k \geq 1} A_{t,k}(X) \subset X.
\end{align*}
Thus we see that 
the pair $(X,Y)$, where $Y=\bigoplus Y_t$, 
is a subrepresentation of the Harnad datum $(V,W,S,T,Q,P)$. 
Because $(V,W,S,T,Q,P)$ is irreducible, we have $X=0$ or $X=V$.
Hence $(V,A)$ is irreducible.

Next we show the `if' part.
Suppose that a subrepresentation $(X,Y)$ 
of the Harnad datum $(V,W,S,T,Q,P)$ is given.
Then we have $S(X) \subset X$ and 
\[
A_{t,k}(X) = Q_t N_t^{k-1} P_t (X) \subset X.
\]
Thus by the irreducibility of $(V,A)$ we get $X=0$ or $X=V$. 
Because the pair $(X,Y)$ is also a subrepresentation of 
the datum $(V,W,T,Q,P)$ which is stable, 
this implies $Y=0$ or $Y=W$ respectively.
Hence $(V,W,S,T,Q,P)$ is irreducible.
\end{proof}

\begin{lemma}\label{lem:canonical}
Let $(V,W,S,T,Q,P) \in \lHk$ with $V \neq 0$ 
and suppose that $(V,W,T,Q,P)$ is stable.
Then $(V,W,S,T,Q,P)$ and its image under $\kappa \circ \Phi$ are 
isomorphic as objects in $\lHk$;
\[
\kappa \circ \Phi (V,W,S,T,Q,P) \sim (V,W,S,T,Q,P).
\]
\end{lemma}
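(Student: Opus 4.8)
The plan is to reduce everything to Proposition~\ref{prop:unique}. First I would unwind the definitions of $\Phi$ and $\kappa$ on the datum at hand. Writing $A(z):=S+\Phi_T(Q,P)$, so that $\Phi(V,W,S,T,Q,P)=(V,A)$, note that the value of $A$ at infinity is $S$, since $\Phi_T(Q,P)=\sum_{t,k} Q_t N_t^{k-1}P_t/(z-t)^k$ vanishes as $z\to\infty$. Hence $\kappa(V,A)$ has the same $\End(V)$-part $S$, and its remaining data $(V,W',T',Q',P')$ is by definition the canonical datum for the $\EE_\vek(V)$-component $\Phi_T(Q,P)$ of $A$. Thus $\kappa\circ\Phi(V,W,S,T,Q,P)=(V,W',S,T',Q',P')$.

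Next I would observe that the two data $(V,W,T,Q,P)$ and $(V,W',T',Q',P')$ are both stable: the former by hypothesis, the latter by Proposition~\ref{prop:explicit}. Moreover they satisfy $\Phi_T(Q,P)=\Phi_{T'}(Q',P')$, which is precisely the $\EE_\vek(V)$-component of the identity $\Phi\circ\kappa=\unit$ applied to $(V,A)$. Therefore Proposition~\ref{prop:unique} applies and yields an isomorphism $f\colon W\to W'$ with $Q'=Qf^{-1}$, $P'=fP$, $T'=fTf^{-1}$.

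Finally I would check that the pair $(f_V,f_W):=(\unit_V,f)$ is an isomorphism in $\lHk$ from $(V,W,S,T,Q,P)$ to $(V,W',S,T',Q',P')$. Both components are invertible, so it remains to verify the four intertwining relations of a morphism of Harnad data. The relation $f_VS=S f_V$ is trivial because $f_V=\unit_V$ and the target carries the same $S$; the relations $f_VQ=Q'f_W$, $f_WT=T'f_W$ and $f_WP=P'f_V$ are exactly the three identities produced by Proposition~\ref{prop:unique}, rewritten. This establishes the asserted isomorphism.

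There is essentially no hard step here: the entire content is that the canonical datum is stable (Proposition~\ref{prop:explicit}) and that a stable datum is determined, up to the obvious kind of isomorphism, by the rational matrix function it represents (Proposition~\ref{prop:unique}); the bookkeeping for $S$ goes through trivially because $S$ is left untouched by both $\Phi$ and $\kappa$. The only mild care needed is to confirm that $\kappa\circ\Phi$ really returns the same $S$ together with a canonical datum for the same $\EE_\vek(V)$-part, which is immediate from $\Phi\circ\kappa=\unit$ and the decay of $\Phi_T(Q,P)$ at infinity.
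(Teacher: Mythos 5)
Your proposal is correct and follows exactly the paper's own argument: the paper's proof likewise notes that $\kappa\circ\Phi$ leaves $V$ and $S$ unchanged and then invokes Proposition~\ref{prop:explicit} (stability of the canonical datum) together with Proposition~\ref{prop:unique} to produce the isomorphism of the form $(\unit_V,f)$. You have simply spelled out the bookkeeping that the paper leaves implicit.
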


\begin{proof}
Note that $\kappa \circ \Phi$ effects no change in both $V$ and $S$.
Therefore Proposition~\ref{prop:unique} together with 
Proposition~\ref{prop:explicit}
gives a desired isomorphism of the form $(\unit_V,f)$.
\end{proof}

\begin{theorem}\label{thm:duality-cat}
If $(V,A) \in \lDk$ is irreducible and $(V,A) \not\sim (\C,s)$ for any $s \in E$,
then $\HD(V,A)$ is also irreducible and
\[
\HD \circ \HD (V,A) \sim (V,A).
\]
\end{theorem}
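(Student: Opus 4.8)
The plan is to deduce the statement formally from the three lemmas just established, applied both in $\lHk$ and, by the symmetry of the whole set-up under $(D,\vek)\leftrightarrow(E,\vel)$, in $\kHl$. Put $(V,W,S,T,Q,P):=\kappa(V,A)$. Unwinding the definitions of $\sigma$ and $\Phi$, the pair $\HD(V,A)=(W,B)\in\kDl$ is exactly $\Phi$ applied to the Harnad datum $(W,V,T,S,P,Q)\in\kHl$, and the $\EE_\vel(W)$-component of $B(\zeta)$ is $P(\zeta\,\unit_V-S)^{-1}Q$. So $\HD(\HD(V,A))$ is obtained by feeding $(W,B)$ back through $\kappa$, then $\sigma$, then $\Phi$.

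The one place the hypothesis $(V,A)\not\sim(\C,s)$ enters is in showing $W\neq 0$. If $W=0$ then $Q=P=0$, so the $\EE_\vek(V)$-component of $A(z)$, which equals $\Phi_T(Q,P)$, vanishes and $A(z)=S$. Since $(V,A)$ is irreducible with $V\neq 0$ and $S$ satisfies \eqref{eq:S}, $S$ has an eigenvector whose span is $S$-invariant, hence all of $V$ by irreducibility; thus $\dim V=1$ and $S$ is a scalar $s\in E$, so $(V,A)\sim(\C,s)$, a contradiction. Hence $W\neq 0$.

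For irreducibility of $\HD(V,A)$: Lemma~\ref{lem:irred} gives that $\kappa(V,A)=(V,W,S,T,Q,P)$ is irreducible in $\lHk$; since $\sigma$ is an equivalence carrying a subrepresentation $(X,Y)$ to $(Y,X)$ and back, $(W,V,T,S,P,Q)$ is irreducible in $\kHl$. As $W\neq 0$, Lemma~\ref{lem:irr-st} shows the underlying datum $(W,V,S,P,Q)$ is stable, and then the `if' direction of Lemma~\ref{lem:irred} (read in $\kHl$) gives that $\Phi(W,V,T,S,P,Q)=\HD(V,A)$ is irreducible.

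Finally, for $\HD\circ\HD(V,A)\sim(V,A)$: since $W\neq 0$ and $(W,V,S,P,Q)$ is stable, Lemma~\ref{lem:canonical} (in $\kHl$) gives $\kappa\circ\Phi(W,V,T,S,P,Q)\sim(W,V,T,S,P,Q)$, i.e.\ $\kappa(\HD(V,A))\sim(W,V,T,S,P,Q)$ in $\kHl$. Applying the equivalence $\sigma$, which sends $(W,V,T,S,P,Q)$ to $(V,W,S,T,Q,P)=\kappa(V,A)$, and then the functor $\Phi$, and using $\Phi\circ\kappa=\unit$, one gets
\[
\HD\circ\HD(V,A)=\Phi\circ\sigma\circ\kappa\bigl(\HD(V,A)\bigr)\sim\Phi\circ\sigma(W,V,T,S,P,Q)=\Phi\circ\kappa(V,A)=(V,A).
\]
I expect the only real care to be needed in the $W\neq 0$ reduction (where the excluded case $(\C,s)$ is used) and in keeping straight that the `backward' directions of Lemmas~\ref{lem:irr-st}, \ref{lem:irred} and \ref{lem:canonical} are being invoked over $\kHl$ rather than $\lHk$; the rest is formal manipulation of the functors $\kappa$, $\sigma$ and $\Phi$.
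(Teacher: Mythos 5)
Your proof is correct and follows essentially the same route as the paper: reduce the excluded case $(\C,s)$ to $W\neq 0$, use Lemma~\ref{lem:irred} and $\sigma$ to get irreducibility of $\sigma\circ\kappa(V,A)$, then Lemma~\ref{lem:irr-st} for stability of $(W,V,S,P,Q)$, Lemma~\ref{lem:irred} again for irreducibility of $\HD(V,A)$, and Lemma~\ref{lem:canonical} together with $\sigma\circ\sigma=\unit$ and $\Phi\circ\kappa=\unit$ for the involutivity. The only difference is cosmetic: you establish $W\neq 0$ up front by contradiction, whereas the paper treats $W=0$ as a separate terminal case.
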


\begin{proof}
The assumption together with
Lemma~\ref{lem:irred} implies that 
$\kappa(V,A) \in \lHk$ is irreducible, 
and so is $\sigma \circ \kappa(V,A)$ 
since $\sigma$ clearly preserves the irreducibility.

Now set $(V,W,S,T,Q,P):=\kappa(V,A)$.
If $W \neq 0$, applying Lemma~\ref{lem:irr-st} to $\sigma \circ \kappa(V,A)$ 
shows that $(W,V,S,P,Q)$ is stable.
Thus we see from Lemma~\ref{lem:irred} that
$\HD(V,A)=\Phi \circ \sigma \circ \kappa(V,A)$ is irreducible,
and hence by Lemma~\ref{lem:canonical} we have
\begin{align*}
\HD \circ \HD (V,A) &= \Phi \circ \sigma \circ (\kappa \circ 
\Phi) \circ \sigma \circ \kappa (V,A) \\
&\sim \Phi \circ (\sigma \circ \sigma) \circ \kappa (V,A) \\
&= \Phi \circ \kappa (V,A) = (V,A).
\end{align*}
If $W=0$, the construction of the canonical datum shows that $A(z)=S$.
Then the irreducibility of $(V,A)=(V,S)$ implies that $V=\C$
and $S$ is a scalar satisfying \eqref{eq:S}.
\end{proof}

The following is an immediate consequence of the above theorem:
\begin{corollary}\label{cor:exceptional}
An irreducible pair $(V,A) \in \lDk$ satisfies $\HD(V,A)=(0,0)$ 
if and only if $(V,A) \sim (\C,s)$ for some $s \in E$.
\end{corollary}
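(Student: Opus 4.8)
Since the corollary is billed as an immediate consequence of Theorem~\ref{thm:duality-cat}, the plan is simply to extract both implications from that theorem together with Example~\ref{ex:dual}, (a); no new construction is required.

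First I would dispose of the ``if'' direction. If $(V,A) \sim (\C,s)$ for some $s \in E$, then up to constant gauge transformation $A(z)$ equals the constant system $S=s$, so every $\EE_{k_t}(\C)$-component of $A(z)$ vanishes. By Example~\ref{ex:dual}, (a) the canonical datum $\kappa(V,A)$ then has zero $W$, and therefore $\HD(V,A)=\Phi \circ \sigma \circ \kappa(V,A)=(0,0)$. Here I would note that $\HD$ is invariant under constant gauge equivalence, so replacing $(V,A)$ by $(\C,s)$ is harmless.

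For the converse I would argue contrapositively. Suppose $(V,A)$ is irreducible and $(V,A) \not\sim (\C,s)$ for every $s \in E$. Then the hypotheses of Theorem~\ref{thm:duality-cat} are met, so that theorem yields that $\HD(V,A)$ is irreducible; in particular $\HD(V,A)$ has nonzero underlying vector space, whence $\HD(V,A) \neq (0,0)$. This is precisely the negation of the conclusion ``$\HD(V,A)=(0,0)$'', which completes the equivalence.

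There is essentially no obstacle to overcome: the whole argument is formal once Theorem~\ref{thm:duality-cat} is available. The only point worth a remark is a consistency check on the statement itself --- a pair of the form $(\C,s)$ lies in $\lDk$ only when the scalar $s$ satisfies \eqref{eq:S}, which by the definition of $E$ forces $s \in E$; thus the restriction $s \in E$ in the corollary is automatic and is recorded merely for emphasis.
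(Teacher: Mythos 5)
Your proposal is correct and follows the paper's intended route: the paper gives no separate proof, calling the corollary an immediate consequence of Theorem~\ref{thm:duality-cat}, and the natural filling-in is exactly what you did — the ``if'' direction from Example~\ref{ex:dual},~(a) (all $\EE_{k_t}$-components vanish, so the canonical datum has $W=0$), and the ``only if'' direction from the contrapositive of the theorem, using that irreducibility by definition forces a nonzero underlying space. Your closing remark that \eqref{eq:S} forces $s\in E$ is also the right consistency check.
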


For a subset $X$ of $\bM(V,W)$ with $W \neq 0$ and 
an endomorphism $S \in \End(V)$ satisfying \eqref{eq:S}, we set
\[
X_{S\st} := \{\, (Q,P) \in \bM(V,W) \mid 
\text{$(W,V,S,P,Q)$ is stable}\,\}.
\] 
Let $G_S \subset \GL(V)$ be the centralizer of $S$, 
$\g_S$ be its Lie algebra, 
and $p_S \colon \gl(V) \to \g^*_S$ be 
the transpose of the inclusion $\g_S \hookrightarrow \gl(V)$.
Then the composite $\mu_S := p_S \circ \mu_V \colon \bM(V,W) \to \g^*_S$ 
is a moment map generating the $G_S$-action.

\begin{theorem}\label{thm:duality-geom}
Let $(V,W,S,T,Q,P)$ be an irreducible Harnad datum with $V, W \neq 0$. 
Let 
\begin{itm}
\item $\bO_V$ be the $G_\vek(V)$-coadjoint orbit 
through $\Phi_T(Q,P)$;
\item $\bO_W$ be the $G_\vel(W)$-coadjoint orbit 
through $\Psi_S(Q,P)$;
\item $\OO_T$ be the $G_T$-coadjoint orbit 
through $\mu_T(Q,P)$; and
\item $\OO_S$ be the $G_S$-coadjoint orbit through $\mu_S(Q,P)$.
\end{itm}
Then the two spaces 
\begin{gather*}
\Mreg_S(\bO_V,\OO_S) := 
\rset{A(z)\in S+\bO_V}{
\begin{aligned}
&\text{\rm $(V,A)$ is irreducible}, \\
&p_S \Bigl( \Res_{z=\infty} A(z) \Bigr) \in -\OO_S
\end{aligned}}
/G_S,
\end{gather*}
and
\begin{gather*}
\Mreg_{-T}(\bO_W,\OO_T) := 
\rset{B(\zeta)\in -T+\bO_W}{
\begin{aligned}
&\text{\rm $(W,B)$ is irreducible}, \\
&p_T \Bigl( \Res_{\zeta=\infty} B(\zeta) \Bigr) \in -\OO_T
\end{aligned}}/G_T,
\end{gather*}
are both holomorphic symplectic manifolds 
and symplectomorphic to each other.
The symplectomorphism is given by $(W,-B(\zeta)) \sim \HD(V,A(z))$.
\end{theorem}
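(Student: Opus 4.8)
The plan is to realize \emph{both} moduli spaces as one and the same symplectic quotient of $\bM(V,W)$, and then to recognize the resulting symplectomorphism as the Harnad dual. Set
\[
\mathcal{N}:=\bigl\{(Q,P)\in\bM(V,W)\ \big|\ (V,W,S,T,Q,P)\ \text{is irreducible},\ \mu_T(Q,P)\in\OO_T,\ \mu_S(Q,P)\in\OO_S\bigr\}.
\]
Since $G_T\subset\GL(W)$ and $G_S\subset\GL(V)$ commute, and $\mu_T$ is invariant under the $\GL(V)$-factor and equivariant for the $\GL(W)$-factor (and symmetrically for $\mu_S$), the subset $\mathcal{N}$ is $G_T\times G_S$-invariant and $(\mu_T,\mu_S)$ restricts to a moment map there. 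On $\mathcal{N}$ the datum $(V,W,T,Q,P)$ is stable by Lemma~\ref{lem:irr-st}, and $(W,V,S,P,Q)$ is stable by Lemma~\ref{lem:irr-st} applied to $\sigma(V,W,S,T,Q,P)=(W,V,T,S,P,Q)$; hence $G_T\times G_S$ acts freely (Lemma~\ref{lem:stable-free} and its $\sigma$-twin) and properly (Proposition~\ref{prop:geom}(a) and its $\sigma$-twin). Granting in addition that $\OO_T\times\OO_S$ is a regular value of $(\mu_T,\mu_S)$ on the irreducible locus, the quotient $\mathcal{M}:=\mathcal{N}/(G_T\times G_S)$ is a smooth holomorphic symplectic manifold carrying the reduced form.

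Next I would identify $\mathcal{M}$ with $\Mreg_S(\bO_V,\OO_S)$ via the map $(Q,P)\mapsto A(z):=S+\Phi_T(Q,P)$. A direct residue computation gives $\Res_{z=\infty}A(z)=-QP$, hence $p_S(\Res_{z=\infty}A(z))=-\mu_S(Q,P)$; together with Theorem~\ref{thm:geom} (which, since $(V,W,T,Q,P)$ is stable and $\mu_T(Q,P)\in\OO_T$, forces $\Phi_T(Q,P)\in\bO_V$) and Lemma~\ref{lem:irred} (which, given stability, translates irreducibility of $(V,W,S,T,Q,P)$ into that of $(V,A)$), this map sends $\mathcal{N}$ into the locus defining $\Mreg_S(\bO_V,\OO_S)$. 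It is $G_S$-equivariant and $G_T$-invariant. Surjectivity onto $\Mreg_S$ follows because $\Phi_T$ maps $\mu_T^{-1}(\OO_T)^{T\st}$ onto $\bO_V$ by Theorem~\ref{thm:geom}, with the remaining constraints read back through Lemma~\ref{lem:irred} and the same residue identity; injectivity modulo $G_T\times G_S$ follows because $\Phi_T$ separates $G_T$-orbits on stable data (Proposition~\ref{prop:geom}(b), equivalently Proposition~\ref{prop:unique}). Thus the map descends to a bijection $\mathcal{M}\xrightarrow{\sim}\Mreg_S(\bO_V,\OO_S)$; performing the reduction in stages — first by $G_T$, which recovers $\bO_V$ symplectically by Theorem~\ref{thm:geom}, then by $G_S$ at $\OO_S$ — shows it is a symplectomorphism for the natural (reduced) symplectic structure on $\Mreg_S$.

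Running the identical argument through the equivalence $\sigma$ (that is, interchanging $V\leftrightarrow W$, $S\leftrightarrow T$, $Q\leftrightarrow P$, $\vek\leftrightarrow\vel$, and $\Phi_T\leftrightarrow\Psi_S$) produces a symplectomorphism $\mathcal{N}/(G_S\times G_T)\xrightarrow{\sim}\Mreg_{-T}(\bO_W,\OO_T)$ induced by $(Q,P)\mapsto -B(\zeta):=T-\Psi_S(Q,P)$; here one checks that $G_S$ acts trivially on $-B$ while $G_T$ acts by conjugation, that $\Res_{\zeta=\infty}B=-\mu_T(Q,P)$ so the condition $p_T(\Res_{\zeta=\infty}B)\in-\OO_T$ is the constraint $\mu_T(Q,P)\in\OO_T$, and that $\Psi_S(Q,P)\in\bO_W$ is equivalent to $\mu_S(Q,P)\in\OO_S$ by the $\sigma$-version of Theorem~\ref{thm:geom}. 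Since $\mathcal{N}/(G_T\times G_S)$ and $\mathcal{N}/(G_S\times G_T)$ are literally the same space, composing the two identifications gives the asserted symplectomorphism $\Mreg_S(\bO_V,\OO_S)\cong\Mreg_{-T}(\bO_W,\OO_T)$. Finally, tracing a class $[A]_{G_S}$ through $\mathcal{M}$ while choosing the representative $(Q,P)$ from the canonical datum $\kappa(V,A)=(V,W,S,T,Q,P)$ shows that its image is the class of $B$ with $(W,-B)=\Phi\circ\sigma\circ\kappa(V,A)=\HD(V,A)$, which is exactly the correspondence stated.

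The main obstacle I anticipate is the transversality claim underpinning the first paragraph: that $\OO_T\times\OO_S$ really is a regular value of $(\mu_T,\mu_S)$ restricted to the irreducible locus, so that $\mathcal{M}$ is a manifold of the expected dimension and the two stage-wise reductions genuinely agree. This is precisely where the dual-pair structure of Remark~\ref{rem:duality} — the symplectic orthogonality of $\Ker d\mu_T$ and $\Ker d\Phi_T$, i.e. of the $G_T$- and $G_\vek(V)$-orbit directions (and symmetrically for $\mu_S$, $\Psi_S$) — must be combined with the freeness of the action; once this is secured, the identification of the reduced forms with the given symplectic structures on $\Mreg_S$ and $\Mreg_{-T}$ is the standard reduction-in-stages bookkeeping, and the only remaining care is in the sign conventions ($-B$ versus $B$, residues at $\infty$) and in keeping the four groups $G_\vek(V),G_\vel(W),G_T,G_S$ and their moment maps $\Phi_T,\Psi_S,\mu_T,\mu_S$ correctly paired.
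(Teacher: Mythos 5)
Your proposal is correct and follows essentially the same route as the paper: both moduli spaces are realized as the quotient of the common set of irreducible Harnad data cut out by $\mu_T\in\OO_T$ and $\mu_S\in\OO_S$, identified on one side via $S+\Phi_T$ (using Theorem~\ref{thm:geom}, Lemma~\ref{lem:irr-st} and Lemma~\ref{lem:irred}) and on the other via $-T+\Psi_S$ through $\sigma$, with Proposition~\ref{prop:unique} giving injectivity modulo the group actions. The regular-value issue you flag is not an extra hypothesis but is disposed of exactly by the staged reduction you describe (and the paper uses): Theorem~\ref{thm:geom} already exhibits the first-stage quotient as the smooth symplectic orbit $S+\bO_V$, and on the irreducible locus the residual $G_S$-action has stabilizer precisely the scalars (Schur), so the second-stage level set and quotient are smooth, as in Remark~\ref{rem:moduli}.
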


\begin{proof}
By Theorem~\ref{thm:geom}, we have a 
$G_S$-equivariant symplectomorphism
\[
S+\Phi_T \colon \mu_T^{-1}(\OO_T)^{T\st}/G_T 
\xrightarrow{\simeq} S + \bO_V.
\]
Under this isomorphism,
the $G_S$-moment map $p_S \circ \mu_V$ on the left hand side 
corresponds to the map $-p_S \circ \Res_{z=\infty}$.
Thus we have a bijection
\begin{multline*}
S+\Phi_T \colon
\mu_S^{-1}(\OO_S) \cap \mu_T^{-1}(\OO_T)^{T\st}/G_S \times G_T \\
\to \rset{A(z) \in S + \bO_V}{
p_S \Bigl( \Res_{z=\infty} A(z) \Bigr) \in -\OO_S} /G_S.
\end{multline*}
Similarly, we have a bijection
\begin{multline*}
-T+\Psi_S \colon
\mu_S^{-1}(\OO_S)_{S\st} \cap \mu_T^{-1}(\OO_T)/G_S \times G_T \\
\to \rset{B(\zeta) \in -T + \bO_W}{
p_T \Bigl( \Res_{\zeta=\infty} B(\zeta) \Bigr) \in -\OO_T} /G_T.
\end{multline*}
If a system $A(z) \in S + \bO_V$ is irreducible 
and $(V,W,T,\tilde{Q},\tilde{P})$ is a stable datum satisfying 
$\Phi_T(\tilde{Q},\tilde{P})=A(z)-S$, 
then the Harnad datum $(V,W,S,T,\tilde{Q},\tilde{P})$ 
is also irreducible by Lemma~\ref{lem:irred}, 
and hence so is 
$B(\zeta)=-T + \Psi_S(\tilde{Q},\tilde{P})$ 
by Lemma~\ref{lem:irred} again. 
The result follows.
\end{proof}

\begin{remark}\label{rem:moduli}
In the situation of Theorem~\ref{thm:duality-geom},
take an arbitrary $R \in p_S^{-1}(\OO_S)$ and 
let $\bO_{V,\infty}$ be the $G_2(V)$-coadjoint orbit through 
$-Sw^{-2} -Rw^{-1}$, where we use $w$ as the indeterminate instead of $z$. 
Note that $\bO_{V,\infty}$ does not depend on the choice of $R$. 
Indeed, for any other $R' \in p_S^{-1}(\OO_S)$, 
we can find $a \in G_S$ satisfying 
\[
aRa^{-1}-R' \in \Ker p_S = \range \ad_S,
\]
namely, there exists some $b \in \End(V)$ such that 
$aRa^{-1} = R' + [S,b]$.
Then setting $g(w):= a +baw \in G_2(V)$, 
one can easily check that 
$g \cdot (-Sw^{-2}-Rw^{-1}) =-Sw^{-2}-R'w^{-1}$.

Using $\bO_{V,\infty}$, we now have the following description of 
$\Mreg_S(\bO_V,\OO_S)$: 
\begin{multline*}
\Mreg_S(\bO_V,\OO_S) \simeq \Mreg(\bO_V,\bO_{V,\infty}) := \\
\rset{(A^0(z),A_\infty(w)) \in \bO_V \times \bO_{V,\infty}}{
\begin{aligned}
&\text{$-A_{\infty,2} + A^0(z)$ is irreducible},\\
&A_{\infty,1}=\Res_{z=\infty} A^0(z)
\end{aligned}}/\GL(V),
\end{multline*}
where $A_\infty(w)=A_{\infty,1}w^{-1}+A_{\infty,2}w^{-2}$.
The symplectomorphism is given by 
\[
S+\bO_V \ni A(z) \mapsto 
\biggl( A(z)-S, -Sw^{-2}+\Bigl( \Res_{z=\infty}A(z) \Bigr) w^{-1} \biggr).
\]
The second component $-Sw^{-2}+w^{-1}\Res_{z=\infty} A(z)$ 
on the right hand side 
is equal to the principal part of the Laurent expansion of 
$A(z)dz$ at $z \equiv w^{-1} =\infty$. 
Therefore we refer to $\Mreg(\bO_V,\bO_{V,\infty})$ 
as the {\em naive moduli space of irreducible systems 
having singularities on $D \cup \{ \infty \}$ 
with truncated formal type 
$(\bO_V, \bO_{V,\infty})$}. 
Since the map 
\[
\bO_V \times \bO_{V,\infty} \ni (A^0(z),A_\infty(w)) \mapsto 
A_{\infty,1} - \Res_{z=\infty}A^0(z)
\]
is a $\GL(V)$-moment map, 
$\Mreg(\bO_V,\bO_{V,\infty})$ is an open subset 
of the symplectic quotient of $\bO_V \times \bO_{V,\infty}$ 
by the $\GL(V)$-action, 
which was studied by Boalch~\cite{Boa-thesis}.
\end{remark}

\begin{remark}\label{rem:bipart}
If both $S$ and $T$ are semisimple, 
then we have $G_S=\prod_s \GL(V_s)$ and $G_T=\prod_t \GL(W_t)$. 
In such cases the open subset of
\[
\mu_S^{-1}(\OO_S) \cap \mu_T^{-1}(\OO_T) /G_S \times G_T,
\]
given by all irreducible Harnad data with fixed $(V,W,S,T)$,
is the Nakajima quiver variety~\cite{Nak-duke1} 
associated to some graph. 
Therefore Theorem~\ref{thm:duality-geom} tells us that 
the two naive moduli spaces 
$\Mreg_S(\bO_V,\OO_S)$ and 
$\Mreg_T(\bO_W,\OO_T)$  
are symplectomorphic to the same Nakajima quiver variety.
It is a typical example of Boalch's 
`different realizations' 
(see \cite[\S 4.7]{Boa-quiver} and also \cite[Appendix A]{Boa-diff}).
\end{remark}

\begin{remark}\label{rem:laplace}
Assume $E=\{ 0 \}$ and $\vel=(1)$.
Let $(V,W,0,T,Q,P)$ be an irreducible Harnad datum with $S=0$, 
$V, W \neq 0$.
Then $(V,A)=\Phi(V,W,0,T,Q,P)$ and its Harnad dual $(W,B)=\HD(V,A)$ are 
written as
\[
A(z)=Q(z\,\unit_W -T)^{-1}P, \quad
B(\zeta)=T+\frac{PQ}{\zeta}.
\]
Consider the operator 
\[
\zeta \Bigl( \frac{d}{d\zeta} - B(\zeta) + \zeta^{-1} \unit_W \Bigr)
= \zeta \frac{d}{d\zeta} -T\zeta -PQ +\unit_W
\] 
corresponding to the system $B(\zeta)-\zeta^{-1}\,\unit_W$.
The inverse Fourier-Laplace transform 
$\mathfrak{F}^{-1} \colon \zeta \mapsto -d/dz,\, d/d\zeta \mapsto z$ 
of it is given by
\begin{align*}
-\frac{d}{dz} z + T \frac{d}{dz} -PQ +\unit_W
&= -\Bigl( \unit_W + z \frac{d}{dz} \Bigr) + T \frac{d}{dz} -PQ +\unit_W \\
&= -(z\,\unit_W - T)\frac{d}{dz} - PQ,
\end{align*}
which corresponds to the system $\widetilde{A}(z):=-(z\,\unit_W -T)^{-1}PQ$.
The relation
\[
A(z)Q=Q(z\,\unit_W -T)^{-1}PQ = -Q \widetilde{A}(z)
\]
means that $Q \colon W \to V$ 
gives a morphism from $(W,-\tilde{A})$ to $(V,A)$.
Note that $Q$ is surjective and $P$ is injective because 
the pairs $(\range Q, W)$ and $(\Ker P,0)$ are both subrepresentations 
of the irreducible Harnad datum $(V,W,0,T,Q,P)$.
In particular, $(V,A)$ is an irreducible quotient of $(W,-\tilde{A})$. 
So we have a commutative diagram
\[
\begin{CD}
(V,A) @>{\HD}>> (W,B) \\
@A{\text{quotient}}AA  @VV{\text{shift by $-\zeta^{-1}$}}V \\
(W,-\widetilde{A}) @<{-\mathfrak{F}^{-1}}<< (W,B-\zeta^{-1}).
\end{CD}
\]

When $T$ is semisimple, a regular singular system of the form
\[
(z\,\unit_W -T) \frac{du}{dz} = R u, \quad R \in \End(W)
\]
is called a system of {\em Okubo normal form}  
and has been studied by many researchers 
(see e.g.\ \cite{BJL1,Okubo,HY,Yok2}).
Kawakami~\cite{Kawakami} further studied systems of the above form 
for arbitrary $T$ (then irregular singularities appear), 
and construct a functor from the category of triples $(W,T,R)$ to that of 
pairs $(V,A)$, which is defined as follows:
for given triple $(W,T,R)$, 
write $R=PQ$, where 
$Q \colon W \to V:=W/\Ker R$ is the projection and 
$P \colon V \to W$ is the injection induced from $R$,
and then send it to the pair $\bigl( V,Q(z\,\unit_W-T)^{-1}P \bigr)$.
Note that it coincides with 
the left vertical arrow in the previous diagram.
\end{remark}

\begin{remark}\label{rem:globaldual}
So far we have restricted our Harnad data  
by Conditions \eqref{eq:T} and \eqref{eq:S} 
using some fixed $D,E,\vek,\vel$.
Considering here various $D,E,\vek,\vel$ at once, 
we obtain the category $\mathcal{H}$ of all Harnad data,
and similarly, the category $\mathcal{D}$ of 
pairs $(V,A)$ consisting of a finite-dimensional $\C$-vector space $V$ 
and an $\End(V)$-valued rational function $A(z)$ 
which is bounded locally near $\infty$.
Then using Remark~\ref{rem:canonical},~(b), 
we can show that 
the functors $\Phi, \kappa, \HD$ are uniquely extended to 
functors $\Phi \colon \mathcal{H} \to \mathcal{D},\, 
\kappa \colon \mathcal{D} \to \mathcal{H},\, 
\HD \colon \mathcal{D} \to \mathcal{D}$.
All the results, except Theorem~\ref{thm:duality-geom}, 
obtained in this section can be rephrased 
in terms of these categories/functors.
\end{remark}

\section{Generalized middle convolution}\label{sec:mc}

For $\alpha \in \EE_\vel(\C)$, we define 
the {\em addition} functor with $\alpha$ by
\[
\add_\alpha \colon \kDl \to \kDl; \quad (W,B) \mapsto (W,B+\alpha).
\]
Now the generalized middle convolution is defined as follows:
\begin{definition}\label{dfn:extmc}
For $\alpha \in \EE_\vel(\C)$, we define
\[
\mc_\alpha := \HD \circ \add_\alpha \circ \HD \colon \lDk \to \lDk,
\]
which we call the {\em middle convolution} functor with $\alpha$.
\end{definition}

\begin{example}\label{ex:mc}
(a) If $0 \in E,\, \alpha(\zeta)=\lambda/\zeta$ 
and $(V,A)$ is Fuchsian, 
$\mc_\alpha(V,A)$ coincides with 
the original middle convolution $\mc_\lambda(V,A)$.

(b) As a simple example, let us consider the case that
$D,E =\{ 0 \}$ and $(V,A)=(\C,\alpha)$ is of rank 1. 
We omit the subscript $t=0$ as usual. 
By virtue of Example~\ref{ex:dual},~(b), 
we already know that in this case
the Harnad dual $(W,B)=\HD(\C,\alpha)$ is given by
\[
W=\C^d, \quad B(\zeta)=N+\frac{R}{\zeta},
\]
where $d$ is the pole order of $\alpha$, 
$N$ is the $d \times d$ nilpotent single Jordan block, 
and $R$ is defined by
\[
R = 
\left(\,\begin{matrix}
                        0  & 0              & \cdots &  0 \\
                    \vdots & \vdots         &        &  \vdots \\
                        0  & 0              & \cdots &  0   \\
            \alpha_d & \alpha_{d-1} & \cdots & \alpha_1 
\end{matrix} \,\right) \in \End(\C^d).
\]
Now let us compute 
the middle convolution 
$\mc_{\lambda/\zeta}(\C,\alpha)=\HD(\C^d,B+\lambda/\zeta)$ 
with $\lambda/\zeta \neq 0$.
If $\lambda + \alpha_1 \neq 0$, 
the matrix $R+\lambda\,\unit_{\C^d}$ is invertible.
Hence the canonical datum $(\C^d,V',0,P',Q')$ for 
$\zeta^{-1}(R+\lambda\,\unit_{\C^d})$ 
is given by
\[
V'=\C^d, \quad P'=R+\lambda\,\unit_{\C^d}, \quad Q'=\unit_{\C^d},
\]
whence
\[
\mc_{\lambda/\zeta}(\C,\alpha) =\bigl( \C^d, Q'(z\,\unit_{\C^d} - N)^{-1} P' \bigr)
= \bigl( \C^d, (z\,\unit_{\C^d} - N)^{-1} (R+\lambda\,\unit_{\C^d}) \bigr).
\]
As a matrix, 
$(z\,\unit_{\C^d} - N)^{-1} (R+\lambda\,\unit_{\C^d})$ 
has the $(i,j)$-entry given by
\begin{equation}\label{eq:mc-example}
\bigl[ (z\,\unit_{\C^d} - N)^{-1} (R+\lambda\,\unit_{\C^d}) \bigr]_{i,j}=
\left\{\,
\begin{aligned}
&\frac{\alpha_{d-j+1}}{z^{d-i+1}}+\frac{\lambda}{z^{j-i+1}} & i \leq j, \\
&\frac{\alpha_{d-j+1}}{z^{d-i+1}} & i > j.
\end{aligned}\right.
\end{equation}
If $\lambda +\alpha_1 = 0$, 
the subspace $\Ker (R+\lambda\,\unit_{\C^d})$ is generated by 
the $d$-th coordinate vector in $\C^d$ and hence 
the projection $\C^d \to \C^{d-1}$ killing the $d$-th component
gives an isomorphism $V'=\C^d/\Ker (R+\lambda\,\unit_{\C^d}) \simeq \C^{d-1}$. 
Under this identification, the decomposition
\[
R+\lambda\,\unit_{\C^d} = 
\left(\,\begin{matrix}
             \lambda &          &  0 \\
                     & \ddots   &     \\
                  0  &          &  \lambda   \\
            \alpha_d &  \cdots  & \alpha_2 
\end{matrix} \,\right) 
\left(\,\begin{matrix}
1 & 0 & \cdots & 0 \\
  & \ddots & \ddots & \vdots  \\
0 &        & 1 & 0
\end{matrix}\,\right)
\]
gives the matrices $P'$ and $Q'$.
The middle convolution is given by
\[
\mc_{\lambda/\zeta}(\C,\alpha) 
=\bigl( \C^{d-1}, Q'(z\,\unit_{\C^d} - N)^{-1} P' \bigr).
\]
As in Remark~\ref{rem:laplace}, we have
\[
\bigl[ Q'(z\,\unit_{\C^d} - N)^{-1} P' \bigr] Q' 
= Q' \bigl[ (z\,\unit_{\C^d} - N)^{-1} P'Q' \bigr].
\]
Since $Q'=( \unit_{\C^{d-1}}\, 0 )$, we see that
the $(i,j)$-entry of $Q'(z\,\unit_{\C^d} - N)^{-1} P'$ 
is the same as that of $(z\,\unit_{\C^d} - N)^{-1} P'Q'$, 
which is given by \eqref{eq:mc-example}.
\end{example}

We give three basic properties of the middle convolution.
First rephrasing Theorem~\ref{thm:duality-cat}, we have the following:
\begin{corollary}\label{cor:mc-property1}
If $(V,A) \in \lDk$ is irreducible and 
$(V,A) \not\sim (\C,s)$ for any $s \in E$, 
then $\mc_0(V,A) \sim (V,A)$.
\end{corollary}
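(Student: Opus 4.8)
The plan is to derive Corollary~\ref{cor:mc-property1} directly from Theorem~\ref{thm:duality-cat} by unwinding the definition $\mc_0 = \HD \circ \add_0 \circ \HD$. Since $\add_0$ is the identity functor on $\kDl$, we immediately have $\mc_0(V,A) = \HD \circ \HD (V,A)$, so the statement is essentially a restatement of the second conclusion of Theorem~\ref{thm:duality-cat}.

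The one point requiring care is the hypothesis of Theorem~\ref{thm:duality-cat}: it demands that $(V,A) \in \lDk$ is irreducible and $(V,A) \not\sim (\C,s)$ for any $s \in E$, which are exactly the hypotheses of the corollary. So first I would note that $\add_0 = \unit_{\kDl}$, hence
\[
\mc_0(V,A) = \HD \circ \add_0 \circ \HD (V,A) = \HD \circ \HD (V,A).
\]
Then I would invoke Theorem~\ref{thm:duality-cat} to conclude $\HD \circ \HD(V,A) \sim (V,A)$, which gives $\mc_0(V,A) \sim (V,A)$.

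There is really no serious obstacle here; the proof is a two-line corollary. The only subtlety worth mentioning for the reader is that in the formation of $\mc_0 = \HD \circ \add_0 \circ \HD$, the middle term $\add_0$ acts on the category $\kDl$, which is where $\HD(V,A)$ lives (with the roles of $D$ and $E$ swapped), and since adding the zero system $\alpha = 0 \in \EE_\vel(\C)$ changes nothing, $\add_0$ is literally the identity functor on that category. Thus no irreducibility or genericity condition on the intermediate object $\HD(V,A)$ is needed — we simply pass straight through to the double-dual statement of Theorem~\ref{thm:duality-cat}.

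\begin{proof}
Since $\add_0$ is the identity functor on $\kDl$, we have
\[
\mc_0(V,A) = \HD \circ \add_0 \circ \HD (V,A) = \HD \circ \HD (V,A).
\]
The hypotheses on $(V,A)$ are precisely those of Theorem~\ref{thm:duality-cat}, which therefore gives $\HD \circ \HD(V,A) \sim (V,A)$. Hence $\mc_0(V,A) \sim (V,A)$.
\end{proof}
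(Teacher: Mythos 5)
Your proof is correct and coincides with the paper's: the paper states Corollary~\ref{cor:mc-property1} as a direct rephrasing of Theorem~\ref{thm:duality-cat}, exactly via the observation that $\add_0$ is the identity so $\mc_0 = \HD \circ \HD$. Nothing further is needed.
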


Furthermore, Theorem~\ref{thm:duality-cat} 
also implies the following:
\begin{corollary}\label{cor:mc-property2}
Suppose that a pair $(V,A) \in \lDk$ 
and $\alpha \in \EE_\vel(\C)$ satisfy the following conditions:
\begin{itm}
\item[{\rm (a)}] $(V,A)$ is irreducible;
\item[{\rm (b)}] $(V,A) \not\sim (\C,s)$ for any $s \in E$;
\item[{\rm (c)}] $\mc_\alpha(V,A) \neq (0,0)$.
\end{itm}
Then $\mc_\alpha(V,A)$ is also irreducible and
\[
\mc_\beta \circ \mc_\alpha (V,A) 
\sim \mc_{\alpha +\beta}(V,A)
\]
for any $\beta \in \EE_\vel(\C)$.
\end{corollary}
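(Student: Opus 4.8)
The plan is to deduce everything from Theorem~\ref{thm:duality-cat} and Corollary~\ref{cor:exceptional}, using the decomposition $\mc_\alpha=\HD\circ\add_\alpha\circ\HD$ together with the observation that the addition functor is ``invisible'' to both the notion of irreducibility and the list of exceptional objects. Concretely, passing from a system $B(\zeta)$ to $B(\zeta)+\alpha(\zeta)\,\unit_W$ only shifts each Laurent coefficient of $B$ at $s\in E$ by the scalar matrix $\alpha_{s,l}\,\unit_W$ and leaves $\lim_{\zeta\to\infty}B(\zeta)$ unchanged; hence a subspace of $W$ is a subrepresentation of $(W,B+\alpha)$ if and only if it is one of $(W,B)$, so $\add_\alpha$ preserves irreducibility (and also the property of being, or not being, isomorphic to some $(\C,t)$ with $t\in D$, as long as $W\neq 0$).

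First I would set $(W,B):=\HD(V,A)$. By Conditions~(a) and~(b), Theorem~\ref{thm:duality-cat} applies and gives that $(W,B)$ is irreducible and $\HD(W,B)\sim(V,A)$; moreover Corollary~\ref{cor:exceptional} forces $(W,B)\neq(0,0)$, so $W\neq 0$. By the scalar-shift remark above, $\add_\alpha(W,B)=(W,B+\alpha)\in\kDl$ is then irreducible with $W\neq 0$. Now Condition~(c) says $\mc_\alpha(V,A)=\HD(W,B+\alpha)\neq(0,0)$, so Corollary~\ref{cor:exceptional} (in its form for $\kDl$, with the roles of $(D,\vek)$ and $(E,\vel)$ exchanged) gives $(W,B+\alpha)\not\sim(\C,t)$ for every $t\in D$. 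Applying Theorem~\ref{thm:duality-cat} (again with $\lDk$ and $\kDl$ interchanged) to $(W,B+\alpha)$, we conclude that $\mc_\alpha(V,A)=\HD(W,B+\alpha)$ is irreducible and that $\HD\circ\HD(W,B+\alpha)\sim(W,B+\alpha)$. This settles the first assertion.

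For the composition law I would just unwind the definitions, using that $\HD$ and $\add_\bullet$ are functors (hence preserve isomorphisms) and that $\add_{\alpha+\beta}=\add_\beta\circ\add_\alpha$:
\begin{align*}
\mc_\beta \circ \mc_\alpha(V,A)
&= \HD\bigl(\add_\beta(\HD\circ\HD(W,B+\alpha))\bigr) \\
&\sim \HD\bigl(\add_\beta(W,B+\alpha)\bigr)
= \HD(W,B+\alpha+\beta)
= \mc_{\alpha+\beta}(V,A),
\end{align*}
where the last equality uses $\mc_{\alpha+\beta}(V,A)=\HD(\add_{\alpha+\beta}(\HD(V,A)))=\HD(\add_{\alpha+\beta}(W,B))$. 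The only points demanding care, and thus the main (mild) obstacles, are the verification that $\add_\alpha$ does not change the invariant-subspace structure and, above all, the essential use of Condition~(c): it is precisely this hypothesis that excludes the degenerate case in which the second application of $\HD$ collapses $(W,B+\alpha)$ to $(0,0)$, which would otherwise obstruct both the irreducibility conclusion and the reconstruction identity $\HD\circ\HD(W,B+\alpha)\sim(W,B+\alpha)$ on which the composition law rests. Beyond these bookkeeping checks no computation is required.
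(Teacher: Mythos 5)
Your proof is correct and follows essentially the same route as the paper: use Theorem~\ref{thm:duality-cat} to get irreducibility of $\HD(V,A)$, note that $\add_\alpha$ preserves irreducibility, invoke Condition~(c) via Corollary~\ref{cor:exceptional} to rule out the exceptional objects $(\C,t)$, apply Theorem~\ref{thm:duality-cat} again (with the roles of $\lDk$ and $\kDl$ exchanged) to get irreducibility of $\mc_\alpha(V,A)$ and $\HD\circ\HD(W,B+\alpha)\sim(W,B+\alpha)$, and then unwind $\mc_\beta\circ\mc_\alpha$ using functoriality and $\add_\beta\circ\add_\alpha=\add_{\alpha+\beta}$. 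Your explicit justification that adding a scalar system does not change the invariant-subspace structure is a detail the paper dismisses as clear, but otherwise the arguments coincide.
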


\begin{proof}
Clearly the addition functor $\add_\alpha$ preserves the irreducibility.
Therefore Theorem~\ref{thm:duality-cat} implies that
$\add_\alpha \circ \HD(V,A)$ is irreducible, 
and further that $\mc_\alpha(V,A)$ is irreducible 
if $\add_\alpha \circ \HD(V,A) \not\sim (\C,t)$ 
for any $t \in D$, or equivalently (by Corollary~\ref{cor:exceptional}), if
$\mc_\alpha(V,A) \neq (0,0)$.
Furthermore, under the same assumption we have
\begin{align*}
\mc_\beta \circ \mc_\alpha (V,A) 
&=\HD \circ \add_\beta \circ (\HD \circ \HD) 
\circ \add_\alpha \circ \HD (V,A) \\
&\sim \HD \circ \add_\beta \circ \add_\alpha \circ \HD (V,A) \\
&=\HD \circ \add_{\alpha+\beta} \circ \HD (V,A)
=\mc_{\alpha + \beta}(V,A).
\end{align*}
\end{proof}

Theorem~\ref{thm:duality-geom} and Remark~\ref{rem:moduli} 
imply the following (the proof is immediate):

\begin{corollary}\label{cor:mc-isom}
Suppose that a pair $(V,A) \in \lDk$ 
and $\alpha \in \EE_\vel(\C)$ satisfy 
all the assumptions in Corollary~\ref{cor:mc-property2}.
Let $\bO \times \bO_\infty \subset \EE_\vek(V) \times \g^*_2(V)$ 
be the truncated formal type of $A(z)$, namely, the
$G_\vek(V) \times G_2(V)$-coadjoint orbit
such that $A(z)$ gives a point in $\Mreg(\bO,\bO_\infty)$.
Similarly, let $(V^\alpha,A^\alpha) := \mc_\alpha(V,A)$ and let
$\bO^\alpha \times \bO^\alpha_\infty \subset 
\EE_\vek(V^\alpha) \times \g^*_2(V^\alpha)$ 
be the truncated formal type of $A^\alpha(z)$.
Then the middle convolution induces a symplectomorphism
\[
\mc_\alpha \colon \Mreg(\bO,\bO_\infty) 
\xrightarrow{\simeq} 
\Mreg(\bO^\alpha,\bO^\alpha_\infty).
\]
\end{corollary}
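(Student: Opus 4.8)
The plan is to factor $\mc_\alpha=\HD\circ\add_\alpha\circ\HD$ and verify that each of the three factors induces a symplectomorphism between the relevant naive moduli spaces; the composite then yields the assertion. I would first record that the three hypotheses of Corollary~\ref{cor:mc-property2} depend only on the truncated formal type $(\bO,\bO_\infty)$, not on the chosen point of $\Mreg(\bO,\bO_\infty)$: irreducibility is built into the definition of $\Mreg$; condition (b) fails exactly when $\dim V=1$ and $\bO=\{0\}$ (equivalently, exactly when every system in $\Mreg(\bO,\bO_\infty)$ is a constant $(\C,s)$ with $s\in E$), and, combined with (a), this also guarantees $W\neq 0$ in what follows; and by Corollary~\ref{cor:exceptional} condition (c) reads $\add_\alpha\circ\HD(V,A)\not\sim(\C,t)$ for all $t\in D$, which is again a condition on formal types through Theorem~\ref{thm:duality-geom}. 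Thus (a)--(c) hold on the whole of $\Mreg(\bO,\bO_\infty)$ simultaneously, and all the moduli spaces below are nonempty.

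For the two outer factors, put $S=\lim_{z\to\infty}A(z)$, let $(V,W,S,T,Q,P)=\kappa(V,A)$ (an irreducible Harnad datum with $V,W\neq0$, by Lemma~\ref{lem:irred}), and take $\bO_V,\bO_W,\OO_S,\OO_T$ as in Theorem~\ref{thm:duality-geom}. Applying Remark~\ref{rem:moduli} to $(V,S)$ and, in the dual direction, to $(W,-T)$ identifies symplectically the two spaces $\Mreg_S(\bO_V,\OO_S)$ and $\Mreg_{-T}(\bO_W,\OO_T)$ of Theorem~\ref{thm:duality-geom} with $\Mreg(\bO,\bO_\infty)$ and with $\Mreg(\bO',\bO'_\infty)$, where the truncated formal type $(\bO',\bO'_\infty)$ depends only on $(\bO,\bO_\infty)$; under these identifications $\HD$ becomes precisely the symplectomorphism of Theorem~\ref{thm:duality-geom}, and $(\bO',\bO'_\infty)$ is the truncated formal type of $\HD(V,A)$ for every point of $\Mreg(\bO,\bO_\infty)$.

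The middle factor is elementary: $\add_\alpha$ translates $B(\zeta)$ by the fixed scalar element $\alpha\,\unit_W$, which is central in $\g_\vel^*(W)$; it therefore sends the two coadjoint orbits defining $\Mreg(\bO',\bO'_\infty)$ (the one for the finite poles and the one at $\infty$) to coadjoint orbits $\bO'',\bO''_\infty$ obtained by shifting by the principal parts of $\alpha$ at $E\cup\{\infty\}$, intertwines the $\GL(W)$-moment maps up to the corresponding constant, and preserves the product Kostant--Kirillov--Souriau form since $\langle c,[X,Y]\rangle=0$ for central $c$; hence it maps $\Mreg(\bO',\bO'_\infty)$ symplectomorphically onto $\Mreg(\bO'',\bO''_\infty)$. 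Composing the three,
\[
\mc_\alpha\colon\Mreg(\bO,\bO_\infty)\xrightarrow{\ \HD\ }\Mreg(\bO',\bO'_\infty)\xrightarrow{\ \add_\alpha\ }\Mreg(\bO'',\bO''_\infty)\xrightarrow{\ \HD\ }\Mreg(\bO^\alpha,\bO^\alpha_\infty)
\]
is a symplectomorphism, and by construction the target is $\Mreg(\bO^\alpha,\bO^\alpha_\infty)$ with $(\bO^\alpha,\bO^\alpha_\infty)$ the truncated formal type of $\mc_\alpha(V,A)$ from the statement. I expect the only real work — and the mildly delicate point — to be the bookkeeping: tracking the formal types $(\bO',\bO'_\infty)$, $(\bO'',\bO''_\infty)$, $(\bO^\alpha,\bO^\alpha_\infty)$ through the identifications of Remark~\ref{rem:moduli} and checking they coincide with those named in the statement, since all of the symplectic content is already supplied by Theorem~\ref{thm:duality-geom}.
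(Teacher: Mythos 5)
Your proposal is correct and takes essentially the same route as the paper, which declares the corollary immediate from Theorem~\ref{thm:duality-geom} and Remark~\ref{rem:moduli}: two applications of that theorem, with the central translation $\add_\alpha$ in between, exactly as you factor it. The one piece of bookkeeping you gloss over is that the theorem's symplectomorphism is $A \mapsto -B$ rather than $A \mapsto B$ (so the intermediate moduli space parametrizes $-B$ and your middle step is the translation $-B \mapsto -B-\alpha$), but since this negation occurs at both $\HD$ steps it cancels and does not affect the conclusion.
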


The rest of this section is devoted to 
study the behavior of truncated formal type 
$(\bO,\bO_\infty) \mapsto (\bO^\alpha,\bO^\alpha_\infty)$
under the middle convolution $\mc_\alpha$.
The following example treats 
the behavior of $\bO_\infty$ in some special, but important case.

\begin{example}\label{ex:ref}
Suppose that $\bO_\infty = w^{-1}\lambda\,\unit_V$ 
for some $\lambda \neq 0$ 
(where $w=z^{-1}$ is the indeterminate for $\bO_\infty$ 
as in Remark~\ref{rem:moduli}). 
Then for any representative $A(z)$ of a point in 
$\Mreg(\bO,\bO_\infty)$, we have
\[
S=0, \quad \Res_{z=\infty} A(z) = \lambda\,\unit_V.
\]
Let $(V,W,T,Q,P)$ be the canonical datum for $A(z)$. 
Since $QP=-\lambda\,\unit_V$ and $\lambda \neq 0$, 
we see that $Q$ is surjective and $P$ is injective. 
Set $V'=\Coker P$ and let $Q' \colon W \to V'$ 
be the natural projection. Then we have an exact sequence
\[
\begin{CD}
0 @>>> V @>{P}>> W @>{Q'}>> V' @>>> 0,
\end{CD}
\]
and $-\lambda^{-1}Q$ gives a splitting of it. 
Let $P' \colon V' \to W$ be the injection such that 
$\lambda^{-1}P'$ is the homomorphism associated to this splitting;
\[
P (-\lambda^{-1}Q) + \lambda^{-1}P'Q'=\unit_W,
\]
namely,
\[
P'Q' -PQ = \lambda\,\unit_W.
\]
Then one can easily see that $\range P=\Ker (PQ +\lambda\,\unit_W)$ 
and $Q'$ coincides with 
the homomorphism induced from $PQ+\lambda\,\unit_W$.
This observation shows that the datum $(V',W,T,Q',P')$ 
coincides with the canonical datum for  
$\mc_{\lambda/\zeta}(V,A)$. 
By the definition we have $Q'P'=\lambda\,\unit_{V'}$, 
and hence 
$\bO^{\lambda/\zeta}_\infty = -w^{-1}\lambda\,\unit_{V^{\lambda/\zeta}}$.
Thus we see that the middle convolution $\mc_{\lambda/\zeta}$ 
induces a symplectomorphism 
\[
\Mreg (\bO, w^{-1}\lambda\,\unit_V) 
\xrightarrow{\simeq} 
\Mreg (\bO^{\lambda/\zeta}, 
-w^{-1}\lambda\,\unit_{V^{\lambda/\zeta}}),
\]
and $\dim V^{\lambda/\zeta} = \dim W - \dim V$.
We will use it in the next section.
\end{example}

From now on, we mainly treat systems satisfying the following property:
\begin{definition}\label{dfn:HTL}
An element in $\EE_k(V)$ of the form
\[
\Lambda(z)=\bigoplus_{\lambda \in \Sigma} 
\Bigl( \lambda(z)\,\unit_{V_\lambda} + \frac{\Gamma_\lambda}{z} \Bigr),
\]
associated to
\begin{itm}
\item a finite subset $\Sigma \subset \EE_k(\C)$ 
whose elements are all residue-free;
\item a decomposition $V=\bigoplus_{\lambda \in \Sigma} V_\lambda$
by nonzero subspaces $V_\lambda,\, \lambda \in \Sigma$; and
\item matrices $\Gamma_\lambda \in \End(V_\lambda),\, \lambda \in \Sigma$,
\end{itm}
is called a {\em Hukuhara-Turrittin-Levelt normal form}, 
or simply, a normal form. 

An element $A(z) \in \EE_k(V)$ is said to {\em have a normal form} if 
it is equivalent to some normal form under the $G_k(V)$-action. 
A normal form $\Lambda(z)$ equivalent to $A(z)$ 
(which is unique up to $\GL(V)$-conjugation 
by Proposition~\ref{prop:HTL-unique} below)
is called the {\em normal form of $A(z)$},
and each $\lambda \in \Sigma$ is called a {\em spectrum} of $A(z)$.

A system $A(z) \in \EE_\vek(V)$, or a pair $(V,A) \in \lDk$, 
is said to {\em have a normal form at $t \in D$}
if its $\EE_{k_t}(V)$-component $A_t(z)$ has a normal form.
\end{definition}

Note that for any normal form $\Lambda(z)$, 
its coefficient matrices  
\[
\Lambda_i \in \End(V),\quad \Lambda(z)=\sum_{i=1}^k \Lambda_i z^{-i},
\]
satisfy the following properties:
\begin{enm}
\item[(a)] $\Lambda_1, \dots, \Lambda_k$ commute with one another;
\item[(b)] $\Lambda_2, \dots, \Lambda_k$ are all semisimple.
\end{enm}
Each simultaneous eigenvalues $(\lambda_2, \dots ,\lambda_k)$
of $(\Lambda_2,\dots ,\Lambda_k)$ give a spectrum of $\Lambda(z)$ 
by $\lambda(z)=\sum_{i=2}^k \lambda_i z^{-i}$, 
and the subspace $V_\lambda$ is given by 
the corresponding simultaneous eigenspace.

\begin{proposition}\label{prop:HTL-unique}
If two normal forms $\Lambda(z), \Lambda'(z)$ are contained in 
the same $G_k(V)$-coadjoint orbit,
then there exists $a \in \GL(V)$ such that 
$\Lambda'(z)=a \Lambda(z) a^{-1}$.
\end{proposition}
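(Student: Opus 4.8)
The plan is to argue by induction on $k$, the case $k=1$ being immediate: there $G_1(V)=\GL(V)$ acts on $\g_1^*(V)\simeq\EE_1(V)$ by ordinary conjugation, so lying in a common orbit is exactly the asserted conclusion. So I fix $k\ge2$ and assume the statement for $k-1$. I would first recall two facts. The $G_k(V)$-coadjoint action on $\g_k^*(V)\simeq\EE_k(V)$ sends $\eta(z)$ to the principal (polar) part at $z=0$ of $g(z)\,\eta(z)\,g(z)^{-1}$; and $G_k(V)=\GL(V)\ltimes G_k^{(1)}(V)$, where $G_k^{(1)}(V):=\{\,g\in G_k(V)\mid g_0=\unit_V\,\}$ and $g_0$ denotes the constant term of $g$. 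Now let $\Lambda'(z)$ be the principal part of $g(z)\Lambda(z)g(z)^{-1}$. Conjugating a normal form by a constant matrix produces again a normal form (it merely transports the decomposition $V=\bigoplus_\lambda V_\lambda$ and the residue matrices $\Gamma_\lambda$), so $g_0^{-1}\Lambda'(z)g_0$ is a normal form, and it equals the principal part of $(g_0^{-1}g)(z)\,\Lambda(z)\,(g_0^{-1}g)(z)^{-1}$ with $g_0^{-1}g\in G_k^{(1)}(V)$. Replacing $\Lambda'$ by $g_0^{-1}\Lambda'g_0$, we may therefore assume $g\in G_k^{(1)}(V)$, and it suffices to produce $a\in\GL(V)$ with $\Lambda'=a\Lambda a^{-1}$.

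Since $g(z)-\unit_V$ vanishes at $z=0$, the commutator $[g,\Lambda]$, hence $g\Lambda g^{-1}-\Lambda=[g,\Lambda]g^{-1}$, has a pole of order at most $k-1$ at $z=0$; comparing the coefficients of $z^{-k}$ gives $\Lambda_k=\Lambda'_k$. By the properties recorded just after Definition~\ref{dfn:HTL}, this common top coefficient is semisimple and commutes with all lower coefficients of both $\Lambda$ and $\Lambda'$; let $V=\bigoplus_j U_j$ be its eigenspace decomposition, with pairwise distinct eigenvalues $\mu_j$, so that $\Lambda$ and $\Lambda'$ are both block-diagonal for this decomposition. Since $g\Lambda g^{-1}-\Lambda'$ is holomorphic at $z=0$, so is $\Lambda'g-g\Lambda$; looking at its $(j,j')$-block with $j\neq j'$ yields that $(\Lambda')^{(j)}(z)\,g_{j,j'}(z)-g_{j,j'}(z)\,\Lambda^{(j')}(z)$ is holomorphic, and comparing the coefficients of $z^{-(k-1)},z^{-(k-2)},\dots,z^{-1}$ in turn, using $\mu_j\neq\mu_{j'}$ and $g_{j,j'}(0)=0$, forces every coefficient of the polynomial $g_{j,j'}(z)$ to vanish. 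Thus $g=\bigoplus_j g^{(j)}$ is block-diagonal, $(\Lambda')^{(j)}(z)$ is the principal part of $g^{(j)}(z)\Lambda^{(j)}(z)g^{(j)}(z)^{-1}$, and $\Lambda^{(j)},(\Lambda')^{(j)}\in\EE_k(U_j)$ are normal forms whose common top coefficient is the scalar $\mu_j\,\unit_{U_j}$.

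It remains to strip off this central scalar. The functions $\Lambda^{(j)}(z)-\mu_j z^{-k}\unit_{U_j}$ and $(\Lambda')^{(j)}(z)-\mu_j z^{-k}\unit_{U_j}$ are normal forms in $\EE_{k-1}(U_j)$, and since the subtracted summand is central, the second is the principal part of the conjugate of the first by $g^{(j)}$. Moreover the $G_k(U_j)$-coadjoint action on elements of pole order at most $k-1$ depends only on $g^{(j)}$ modulo $z^{k-1}$, hence factors through $G_{k-1}(U_j)$: multiplying $g^{(j)}$ by an element of $z^{k-1}\End(U_j)$ changes the conjugate only by a holomorphic summand. So these two normal forms lie in a common $G_{k-1}(U_j)$-coadjoint orbit, and the induction hypothesis gives $a_j\in\GL(U_j)$ conjugating one to the other; since the subtracted summand is a scalar, the same $a_j$ conjugates $\Lambda^{(j)}$ to $(\Lambda')^{(j)}$, and $a:=\bigoplus_j a_j\in\GL(V)$ is then the required matrix.

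I expect the block-diagonalization of $g$ to be the main obstacle: because the coadjoint action only conjugates the principal part rather than being a genuine conjugation of $\End(V)$-valued functions, the vanishing of the off-diagonal blocks of $g$ must be extracted recursively, one coefficient at a time from the most singular term downward, with the normalization $g_0=\unit_V$ providing the base of that recursion. The remaining ingredients — that constant conjugation preserves the class of normal forms, and that the coadjoint action on elements of lower pole order descends to $G_{k-1}$ — are routine once stated carefully.
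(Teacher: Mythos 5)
Your proof is correct, and the steps you flag as delicate do go through: the coadjoint action of $G_k(V)$ on $\g^*_k(V)\simeq\EE_k(V)$ is indeed ``take the principal part of $g\Lambda g^{-1}$'', the off-diagonal blocks of $g$ (with respect to the eigenspaces of the common leading coefficient) are killed coefficient by coefficient from $z^{-(k-1)}$ down to $z^{-1}$ exactly as you describe, and the action on elements of pole order at most $k-1$ does descend to $G_{k-1}$, so the induction closes. Your route is, however, organized differently from the paper's. The paper argues in a single pass: writing $g\cdot\Lambda=\Lambda'$ as the system of relations $\sum_{i}(\Lambda_{l+i}g_i-g_i\Lambda'_{l+i})=0$ for $l=k,\dots,1$, it uses the $l=k$ relation to normalize $g_0=\unit_V$ and $\Lambda_k=\Lambda'_k$, then introduces the iterated centralizers $\frh_l=\Ker\bigl(\ad_{\Lambda_l}|_{\frh_{l+1}}\bigr)$ and the splittings $\frh_{l+1}=\frh_l\oplus\frh'_l$ (available because $\Lambda_2,\dots,\Lambda_k$ are commuting and semisimple) to conclude, descending in $l$, that $g_{i}\in\frh_{i+1}$ and $\Lambda_i=\Lambda'_i$ for every $i$; in particular, after conjugating by the constant term $g_0$ the two normal forms literally coincide, so $a=g_0$ works. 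You instead induct on the pole order: eigen-decompose $V$ under $\Lambda_k$, block-diagonalize $g$, strip the central term $\mu_j z^{-k}\unit_{U_j}$ on each block, and recurse through $G_{k-1}(U_j)$. Both proofs rest on the same two facts recorded after Definition~\ref{dfn:HTL} (commutativity of all the $\Lambda_i$, semisimplicity of $\Lambda_2,\dots,\Lambda_k$); yours avoids setting up the filtration of $\gl(V)$ at the cost of carrying the reduction-of-order bookkeeping (stripping scalars, descent of the group), while the paper's filtration argument is shorter once the $\frh_l$ are in place and makes the refinement $a=g_0$ explicit — your recursion would also yield that refinement if you strengthened the inductive statement to ``if the conjugating element has constant term $\unit_V$, the two normal forms are equal'', since all group elements appearing at deeper levels of your recursion have constant term $\unit$.
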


\begin{proof}
Let $\Lambda(z)=\sum \Lambda_i z^{-i},\,\Lambda'(z)=\sum \Lambda'_i z^{-i}$
and suppose that there exists $g(z)=\sum_i g_i z^i \in G_k(V)$ 
such that $g \cdot \Lambda = \Lambda'$.
It means $g(z)\Lambda(z)z^k = \Lambda'(z)g(z)z^k$ modulo $z^k$, 
which can be written as
\[
\sum_{i=0}^{k-l} \bigl( \Lambda_{l+i}g_i-g_i \Lambda'_{l+i} \bigr) 
=0, \quad l=1,2, \dots ,k.
\]
Looking at the relation for $l=k$, we obtain $g_0 \Lambda_k = \Lambda' g_0$.
So taking the conjugation by $g_0$ on both sides of the relation 
$g \cdot \Lambda = \Lambda'$, we may assume that 
$g_0=1$ and $\Lambda_k=\Lambda'_k$.
Set $\frh_{k+1}:= \gl(V)$ and inductively
\[
\frh_l := \Ker \bigl( \ad_{\Lambda_l}|_{\frh_{l+1}} \bigr), \quad
\frh'_l := \range \bigl( \ad_{\Lambda_l}|_{\frh_{l+1}} \bigr),\quad
l=1,2, \dots ,k.
\]
Note that $\frh'_l \subset \frh_{l+1}$ for $l \geq 1$ as 
$\Lambda_1, \dots ,\Lambda_k$ commute with one another.
Moreover, since $\Lambda_2, \dots ,\Lambda_k$ are all semisimple, we have
\[
\frh_{l+1} = \frh_l \oplus \frh'_l, \quad l=2,\dots ,k.
\]
Now looking at the relation for $l=k-1$, we have
\[
[\Lambda_k,g_1] \in \frh'_k, \quad \Lambda_{k-1}, \Lambda'_{k-1} \in \frh_k, 
\quad [\Lambda_k,g_1] + \Lambda_{k-1} = \Lambda'_{k-1}.
\]
Since $\gl(V)=\frh'_k \oplus \frh_k$, we obtain 
\[
[\Lambda_k,g_1]=0, \quad \Lambda_{k-1}= \Lambda'_{k-1}.
\]
The first relation means $g_1 \in \frh_k$.
Next looking at the relation for $l=k-2$, we obtain
\[
[\Lambda_k,g_2] \in \frh'_k, \quad [\Lambda_{k-1},g_1] \in \frh'_{k-1}, 
\quad [\Lambda_k,g_2] + [\Lambda_{k-1},g_1] + \Lambda_{k-2} = \Lambda'_{k-2}.
\]
Since $\gl(V)=\frh'_k \oplus \frh'_{k-1} \oplus \frh_{k-1}$ and 
$\Lambda_{k-2},\Lambda'_{k-2} \in \frh_{k-1}$, 
we obtain
\[
[\Lambda_k,g_2]=0, \quad [\Lambda_{k-1},g_1]=0, 
\quad \Lambda_{k-2} = \Lambda'_{k-2}.
\]
The first two relations mean $g_2 \in \frh_k,\, g_1 \in \frh_{k-1}$. 

Iterating the argument inductively, we finally obtain 
\[
g_i \in \frh_{i+1}, \quad
\Lambda_i = \Lambda'_i, \quad i=1, \dots ,k-1
\]
from the decomposition 
$\gl(V)=\frh'_k \oplus \frh'_{k-1} \oplus \cdots \oplus \frh'_2 \oplus \frh_2$.
\end{proof}

\begin{remark}\label{rem:HTL-semisimple}
If the leading term of $A(z)=\sum_i A_i z^{-i} \in \EE_k(V)$ is 
diagonalizable with distinct eigenvalues, 
then $A(z)$ has a normal form $\Lambda(z)=\sum_i \Lambda_i z^{-i}$,
where $\ord(\Lambda)=\ord(A) (=: d)$ and 
$\Lambda_i$ is given by the $\Ker (\ad_{A_d})$-part of $A_i$ 
relative to the decomposition 
$\gl(V)=\Ker (\ad_{A_d}) \oplus \range (\ad_{A_d})$ 
(i.e., the `diagonal part' of $A_i$).

Conversely, one can easily show that if $A(z) \in \EE_k(V)$ has a normal form,
then its leading term is semisimple.
\end{remark}

\begin{remark}\label{rem:HTL-infty}
Of course we can also define a similar notion for the singularity at $\infty$;
$(V,A) \in \lDk$ is said to have a normal form at $\infty$ if
the principal part 
\[
A_\infty(w)=-Sw^{-2}+w^{-1}\Res_{z=\infty} A(z) \in \EE_2(V) 
\]
of the Laurent expansion of $A(z)dz$ at $\infty$ has a normal form.
Here one can observe that in general,
an element $Sw^{-2} + Rw^{-1} \in \EE_2(V)$ has a normal form 
if and only if $S$ is semisimple.
Hence $(V,A) \in \lDk$ has a normal form at $\infty$ if and only if 
$S=\lim_{z\to \infty}A(z)$ is semisimple, 
which is equivalent to that $\HD(V,A)$ is Fuchsian at any $s \in E$ 
if $(V,A)$ is irreducible.
\end{remark}

\begin{remark}\label{rem:unramified}
Let $\FF := \C((z))$ be the field of formal Laurent series and 
consider each $A(z) \in \EE_k(V)$ as an element of 
$\End_{\FF} (V \otimes \FF) \simeq \gl(V) \otimes \FF$, 
on which the group $\Aut_{\FF} (V \otimes \FF)$ 
acts as gauge transformations;
\[
A \mapsto g [A] := gAg^{-1} + \frac{dg}{dz}g^{-1}, 
\quad g \in \Aut_{\FF} (V \otimes \FF).
\]
A fundamental fact in the Hukuhara-Turrittin-Levelt theory 
of meromorphic linear ordinary differential equations
~\cite{Huk,Tur,Levelt} is that 
for any system $A(z) \in \gl(V) \otimes \FF$, 
there exists a positive integer $b$ such that 
the pull back of the form $A(z)dz$ via the ramified covering map 
$f_b \colon z \mapsto z^b$ is 
equivalent to some normal form $\Lambda(z)dz$ 
under the $\Aut_{\FF}(V \otimes \FF)$-action.
For this reason, a system $A(z)$ is said to be {\em unramified} 
if it is equivalent to some normal form $\Lambda(z)$ under the 
$\Aut_{\FF}(V \otimes \FF)$-action (i.e., in the case of $b=1$).

It is also known (see \cite[\S 6--7]{BV-group}) that 
two normal forms 
$\Lambda(z)=\sum \Lambda_i z^{-i},\, 
\Lambda'(z)=\sum \Lambda'_i z^{-i}$ are equivalent 
under the $\Aut_{\FF}(V \otimes \FF)$-action if and only if
those have the same pole order and there exists $a \in \GL(V)$ 
such that 
\[
a\Lambda_ia^{-1}=\Lambda'_i,\; i \geq 2, \quad
a\exp \bigl( 2\pi \sqrt{-1} \Lambda_1 \bigr) a^{-1}
= \exp \bigl( 2\pi \sqrt{-1} \Lambda'_1 \bigr).
\]
It implies that for any unramified system $A(z) \in \gl(V) \otimes \FF$,
there exists a {\em unique} normal form $\Lambda(z)$ 
equivalent to $A(z)$ such that
the real parts of all the eigenvalues 
of the residue $\Lambda_1=\Res_{z=0}\Lambda(z)$ are in $[0,1)$.
A normal form satisfying such a condition is said to be {\em reduced}.

Note that in the above we consider  
formal {\em meromorphic} gauge transformations.
Replacing the action of $\Aut_{\FF}(V \otimes \FF)$ with that of the subgroup 
$\Aut_{\C[[z]]}(V \otimes \C[[z]])$ 
(in other words, considering only formal gauge transformations)
in the above definition of unramifiedness, 
we obtain a notion close to 
the one in Definition~\ref{dfn:HTL};
in fact, the following two conditions 
for a system $A(z) \in \EE_k(V)$ are equivalent:
\begin{enm}
\item[(a)] $A(z)$ is equivalent to a reduced normal form 
under the $G_k(V)$-action; 
%all of whose residues $\Gamma_\lambda$ are {\em non-resonant}, i.e.,
%have no two distinct eigenvalues differing by an integer;
\item[(b)] $A(z)$ is equivalent to a reduced normal form
under the $\Aut_{\C[[z]]}(V \otimes \C[[z]])$-action.
%all of whose residues $\Gamma_\lambda$ are non-resonant.
\end{enm}
The direction (b) $\Rightarrow$ (a) is clear because 
the relation $g[A]=\Lambda$ induces the relation 
$\ov{g} \cdot A=\Lambda$ in $\EE_k(V)$, where $\ov{g}(z) \in G_k(V)$ 
is the element induced from $g$ by taking modulo $z^k$ 
(so this direction is also true in the non-reduced case).
The proof of the direction (a) $\Rightarrow$ (b) is as follows. 
For given relation $g \cdot A =\Lambda$ in $\EE_k(V)$, 
consider $g$ as an element of $\Aut_{\C[[z]]}(V \otimes \C[[z]])$. 
Then it is easy to see that $g[A]-\Lambda \in \gl(V) \otimes \C[[z]]$.
Now use \cite[Lemma 1.4.1]{BV-ast}.

Note that Condition~(b) is not equivalent to the unramifiedness.
For instance, let $\Gamma$ be the $2 \times 2$ nilpotent single Jordan block 
and set $\Lambda(z):=\Gamma z^{-1}$ 
which is a normal form by definition.
Take $g(z):=\operatorname{diag}(1,z^k)$ for some integer $k>1$. 
Then the unramified element $A(z):=g[\Lambda]$ has pole order $k$ and 
no $\C[[z]]$-part, so it is contained in $\EE_k(\C^2)$. 
However it is not equivalent to a normal form 
under the $\Aut_{\C[[z]]}(V \otimes \C[[z]])$-action 
because its leading term is nilpotent.
\end{remark}

Recall that by Theorem~\ref{thm:geom}, 
each $G_\vek(V)$-coadjoint orbit 
can be described as 
the symplectic quotient $\mu_T^{-1}(\OO)^{T\st}/G_T$ 
for some vector space $W$, an endomorphism $T \in \End(W)$ and 
a $G_T$-coadjoint orbit $\OO \subset \g_T^*$ via the map $\Phi_T$.
We denote it by $\mu_T^{-1}(\OO)_V^{T\st}/G_T$
when we want to emphasize the vector space $V$.
In the two lemmas below, we assume for simplicity that $D=\{ 0 \}$ 
and omit the subscript $t=0$ as usual.
Also for $T \in \gl(W)$, we denote by 
$p_T \colon \gl(W) \to \g_T^*$ the natural projection onto 
the dual of the centralizer $\g_T$ of $T$ as before.

\begin{lemma}\label{lem:mc-sing}
Let $W$ be a nonzero finite-dimensional $\C$-vector space,
$N \in \End(W)$ be a nilpotent endomorphism with $N^k=0$, 
and $\OO \subset \g_N^*$ be a $G_N$-coadjoint orbit.
Suppose that finite-dimensional $\C$-vector spaces $V, V' \neq 0$ 
and $\alpha \in \C$ satisfy the following conditions:
\begin{enm}
\item[{\rm (a)}] $\Phi_N \bigl( \mu_N^{-1}(\OO)_V^{N\st}/G_N \bigr)$ 
contains a normal form;
\item[{\rm (b)}] 
$\mu_N^{-1}\bigl(\OO+p_N(\alpha\,\unit_{W})\bigr)_{V'}^{N\st}$ 
is non-empty,
\end{enm}
Then the orbit
$\Phi_N \Bigl( \mu_N^{-1}\bigl(\OO+p_N(\alpha\,\unit_{W})
\bigr)_{V'}^{N\st}/G_N \Bigr)$ 
contains a normal form which has the same nonzero spectra 
as that in {\rm (a)}.
\end{lemma}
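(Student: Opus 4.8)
The plan is to recognise the passage from $\OO$ to $\OO+p_N(\alpha\,\unit_W)$ as a middle convolution and then to analyse that middle convolution one spectrum at a time.

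\emph{Reduction to middle convolution of a normal form.} By condition~(a) the orbit $\bO_V:=\Phi_N(\mu_N^{-1}(\OO)_V^{N\st}/G_N)$ contains a normal form $\Lambda(z)=\bigoplus_{\lambda\in\Sigma}\bigl(\lambda(z)\,\unit_{V_\lambda}+\Gamma_\lambda/z\bigr)$; by Theorem~\ref{thm:geom} and the surjectivity of $\Phi_N$ onto $\bO_V$ we may fix a stable datum $(V,W,N,Q,P)$ with $\Phi_N(Q,P)=\Lambda$ and, after translating by $G_N$, with $p_N(-PQ)$ a chosen point of $\OO$. Regard $(V,\Lambda)$ as an object of $\lDk$ with $S:=\lim_{z\to\infty}\Lambda(z)=0$, and take $E=\{0\}$, $\vel=(1)$. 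Unwinding the definitions of $\kappa$, $\sigma$, $\Phi$ (as in Remark~\ref{rem:laplace} and Example~\ref{ex:ref}) one finds $\mc_{-\alpha/\zeta}(V,\Lambda)=\bigl(W',\Phi_N(\pi,\iota)\bigr)$, where $W':=W/\Ker(PQ-\alpha\,\unit_W)$, $\pi\colon W\to W'$ is the projection and $\iota\colon W'\to W$ the injection with $\iota\pi=PQ-\alpha\,\unit_W$; thus the datum $(W',W,N,\pi,\iota)$ satisfies $\mu_N(\pi,\iota)=p_N(-PQ)+p_N(\alpha\,\unit_W)\in\OO+p_N(\alpha\,\unit_W)$, and a check with Lemma~\ref{lem:stability} shows it is stable. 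By Theorem~\ref{thm:geom} its $\Phi_N$-image lies in $\bO_{V'}:=\Phi_N\bigl(\mu_N^{-1}(\OO+p_N(\alpha\,\unit_W))_{V'}^{N\st}/G_N\bigr)$ and $V'\cong W'$. Hence it remains to show that $\mc_{-\alpha/\zeta}$ carries $\Lambda$ to a normal form whose set of nonzero spectra is $\{\lambda\in\Sigma:\lambda\neq0\}$.

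\emph{One spectrum at a time.} Since the canonical datum of a direct sum of nonzero pairs is the direct sum of the canonical data (Remark~\ref{rem:canonical}(c)), each of $\kappa,\sigma,\Phi,\add$ commutes with finite direct sums, hence so does $\mc_{-\alpha/\zeta}$, and therefore $\mc_{-\alpha/\zeta}(V,\Lambda)=\bigoplus_{\lambda\in\Sigma}\mc_{-\alpha/\zeta}\bigl(V_\lambda,\lambda(z)\,\unit_{V_\lambda}+\Gamma_\lambda/z\bigr)$. For $\lambda=0$ the summand $(V_0,\Gamma_0/z)$ is Fuchsian, so by Example~\ref{ex:mc}(a) the operation is the classical middle convolution and returns a Fuchsian pair, i.e.\ a normal form with only the spectrum $0$. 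For $\lambda\neq0$ set $d:=\ord(\lambda)\geq2$; the leading coefficient of $\lambda(z)\,\unit_{V_\lambda}+\Gamma_\lambda/z$ is the invertible scalar matrix $\lambda_d\,\unit_{V_\lambda}$, so by Remark~\ref{rem:canonical} its canonical datum is the jet datum of Lemma~\ref{lem:existence}, and $\mc_{-\alpha/\zeta}$ produces on this summand a system $C_\lambda(z)$ on a quotient of $V_\lambda^{\oplus d}$. As $\lambda(z)\,\unit$ is a scalar matrix, proving that $C_\lambda$ has a normal form whose only nonzero spectrum is $\lambda$ reduces to a Hukuhara--Turrittin--Levelt reduction of $C_\lambda$: one computes that the leading coefficient of $C_\lambda$ is semisimple with exactly the two eigenvalues $\lambda_d$ (of multiplicity $\dim V_\lambda$) and $0$; splitting off the $\lambda_d$-eigenspace peels off the term $\lambda_d z^{-d}$ of the exponential factor, and iterating this reduction (cf.\ Remark~\ref{rem:unramified}) rebuilds $\lambda(z)$ on a nonzero subquotient while the complementary part has its pole order brought down to $1$, i.e.\ becomes Fuchsian. (Conceptually this is the statement that, under the Harnad dual, the factor $\exp\bigl(\int\lambda\bigr)$ at $z=0$ corresponds to an irregular singularity at $\zeta=\infty$ that $\add_{-\alpha/\zeta}$ leaves untouched and that is restored upon dualising back.) Reassembling the summands, $\mc_{-\alpha/\zeta}(V,\Lambda)$ is a normal form whose nonzero spectra are exactly those of $\Lambda$, as required.

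\emph{The main obstacle.} It is the reduction in the previous paragraph: one has to carry out the Hukuhara--Turrittin--Levelt reduction of $C_\lambda$ explicitly and verify that the exponential factor it rebuilds is precisely $\lambda(z)$ — semisimplicity of the leading coefficient, while necessary by Remark~\ref{rem:HTL-semisimple}, does not by itself suffice — and one must keep track of the new residue and of the drop of dimension caused by $\Ker(PQ-\alpha\,\unit_W)$ when $\alpha$ is an eigenvalue of some $\Gamma_\lambda$. The remaining points — stability of the datum $(W',W,N,\pi,\iota)$ and the identification $V'\cong W'$ (which ultimately rests on the Jordan type of $N$ forcing the dimensions of the normal form) — are routine uses of Lemma~\ref{lem:stability} and Theorem~\ref{thm:geom}.
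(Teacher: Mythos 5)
Your reduction of the lemma to computing $\mc_{-\alpha/\zeta}(V,\Lambda)$ breaks down precisely at the step you dismiss as routine: the identification $V'\cong W'$. The space $V'$ is part of the hypothesis and is \emph{not} determined by $(W,N,\OO,\alpha)$. Indeed, given any stable point $(Q',P')\in\mu_N^{-1}\bigl(\OO+p_N(\alpha\,\unit_W)\bigr)_{V'}^{N\st}$, one may replace $V'$ by $V'\oplus U$, extend $Q'$ by composing with the inclusion and $P'$ by an arbitrary map on $U$: the two conditions of Lemma~\ref{lem:stability} and the value of $\mu_N$ (which only sees $P'Q'$) are unaffected, so condition (b) holds for infinitely many non-isomorphic $V'$, while your construction produces a normal form only over the single space $W'=W/\Ker(PQ-\alpha\,\unit_W)$. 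The lemma asks for a normal form in the orbit over the \emph{given} $V'$. This is exactly why the paper's proof uses the point $(Q',P')$ furnished by (b), and not merely its existence: it sets $V'_\lambda:=Q'(\Ker N_\lambda)$ for $\lambda\neq0$ and $V'_0:=V'/\bigoplus_{\lambda\neq0}V'_\lambda$, extracts from $(Q',P')$ a surjection $P_0^\alpha$ and an injection $Q_0^\alpha$ with $P_0^\alpha Q_0^\alpha=P_0Q_0-\alpha\,\unit_{W_0}$, and glues these to the canonical data of the shifted blocks $\lambda(z)\,\unit_{V_\lambda}+(\Gamma_\lambda-d_\lambda\alpha\,\unit_{V_\lambda})/z$; the ``extra'' dimensions of $V'$ are absorbed into the Fuchsian block $Q_0^\alpha P_0^\alpha/z$ of \eqref{eq:mc-normal}, which contributes only the zero spectrum.

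Even in the special case $V'\cong W'$, your argument is incomplete at the point you yourself label the main obstacle: the claim that for $\lambda\neq0$ the block $C_\lambda$ of $\mc_{-\alpha/\zeta}(V,\Lambda)$ reduces, via a Hukuhara--Turrittin--Levelt splitting, to a normal form whose unique nonzero spectrum is $\lambda$ is asserted, not proved (semisimplicity of the leading term does not suffice, as you note), and you would still have to check that the resulting system lies in the orbit the lemma names, i.e.\ verify a moment-map condition of the form $\mu_N(Q^\alpha,P^\alpha)\in\OO+p_N(\alpha\,\unit_W)$. The paper avoids any formal reduction altogether: it writes down the canonical datum of the shifted normal form explicitly, so that its $\Phi_N$-image is \emph{literally} a normal form with the same nonzero spectra, and verifies the moment condition by exhibiting $P^\alpha Q^\alpha-PQ+\alpha\,\unit_W$ as an explicit commutator $[N,\bigoplus_\lambda X_\lambda]\in\Ker p_N$. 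Until you handle a general $V'$ and carry out (or replace) the reduction of $C_\lambda$, the proposal does not prove the lemma.
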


\begin{proof}
By the assumption, the image of $\mu_N^{-1}(\OO)_V^{N\st}/G_N$ 
under the map $\Phi_N$ contains a normal form
\[
\Lambda(z)=
\bigoplus_{\lambda \in \Sigma} 
\Bigl( \lambda(z)\,\unit_{V_\lambda} + \frac{\Gamma_\lambda}{z} \Bigr), \quad
\lambda(z)=\sum_{i=2}^k \lambda_i z^{-i},\; 
V=\bigoplus_{\lambda \in \Sigma} V_\lambda,\;
\Gamma_\lambda \in \End(V_\lambda).
\]
In what follows we assume $0 \in \Sigma$. 
Replacing $\Sigma$ with $\Sigma \cup \{ 0 \}$ and 
setting $V_0 := 0,\, \Gamma_0 :=0$ make the argument below work well 
also in the case $0 \notin \Sigma$.

Set $d_\lambda := \ord(\lambda + \Gamma_\lambda/z)$.
By Remark~\ref{rem:canonical}, 
we see that the canonical datum for $\Lambda(z)$ is given by
the direct sum of those $(V_\lambda,W_\lambda,N_\lambda,Q_\lambda,P_\lambda)$
for $\lambda(z)\,\unit_{V_\lambda}+\Gamma_\lambda/z$.
The direct summand for $\lambda \neq 0$ is given by
\begin{gather*}
W_\lambda = V_\lambda \otimes \C^{d_\lambda}, \\
Q_\lambda =  \begin{pmatrix} 
\lambda_{d_\lambda}\unit_{V_\lambda}\, & 
\lambda_{d_\lambda-1}\unit_{V_\lambda} & 
\cdots & 
\lambda_2\unit_{V_\lambda}\, &
\Gamma_\lambda \end{pmatrix} \in \Hom(W_\lambda,V_\lambda), \\ 
P_\lambda = \left(\, \begin{matrix} 0 \\ \vdots \\ 0 \\ \unit_{V_\lambda}
       \end{matrix} \,\right) \in \Hom(V_\lambda,W_\lambda), \quad
N_\lambda = \left(\,\begin{matrix}
                        0     & \ \unit_{V_\lambda} \ &        & \ 0 \\
                              & 0            & \ddots &   \\
                              &              & \ddots & \ \unit_{V_\lambda}   \\
                        0     &              &        & \ 0 
\end{matrix} \,\right)
\in \End(W_\lambda),
\end{gather*}
and for $\lambda=0$, given by
\begin{gather*}
W_0 = V_0/\Ker \Gamma_0, \quad N_0=0, \\
Q_0 \colon W_0 \xrightarrow{\Gamma_0} V_0, \quad
P_0 \colon V_0 \xrightarrow{\text{projection}} W_0.
\end{gather*}
By Proposition~\ref{prop:unique}, we may assume that 
$W=\bigoplus_\lambda W_\lambda$ and $N=\bigoplus_\lambda N_\lambda$.
We set $P:=\bigoplus_\lambda P_\lambda,\, Q:=\bigoplus_\lambda Q_\lambda$.

Now take a point 
$(Q',P') \in \mu_N^{-1}\bigl(\OO+p_N(\alpha\,\unit_W)\bigr)_{V'}^{N\st}$.
By using the $G_N$-action, 
we may assume that $\mu_N(Q',P')=\mu_N(Q,P)+p_N(\alpha\,\unit_W)$, i.e.,
\[
P'Q'=PQ -\alpha\,\unit_W + [N,X]
\]
for some $X \in \End(W)$. 
Let us write it as
\begin{align*}
(P'Q')_{\lambda \lambda} &= 
P_\lambda Q_\lambda -\alpha\,\unit_{W_\lambda} 
+ [N_\lambda ,X_{\lambda \lambda} ], \\
(P'Q')_{\lambda \mu}
&= N_\lambda X_{\lambda \mu} 
- X_{\lambda \mu} N_\mu, \quad \lambda \neq \mu,
\end{align*}
where the subscript `$\lambda \mu$' means 
the $\Hom(W_\mu,W_\lambda)$-block component.
By Lemma~\ref{lem:stability}, the restriction $Q'|_{\Ker N}$ is injective.
Regarding $\Ker N_\lambda$ as a subspace of 
$\Ker N \subset W$ using the direct sum decomposition, 
we set $V'_\lambda := Q'(\Ker N_\lambda)$ for $\lambda \neq 0$ and 
\[
V'_0 := V'/\bigoplus_{\lambda \neq 0} V'_\lambda.
\]
Since the composite of $P'Q' |_{\Ker N_\lambda} \colon \Ker N_\lambda \to W$ 
and the projection $\pi_0 \colon W \to W_0$ is
\[
(P'Q')_{0\lambda} |_{\Ker N_\lambda}
= (N_0 X_{0\lambda} -X_{0\lambda} N_\lambda)|_{\Ker N_\lambda}
=0
\]
for $\lambda \neq 0$, 
the map $\pi_0 \circ P' \colon V' \to W_0$ descends to a map 
\[
V'_0 = V'/\bigoplus_{\lambda \neq 0} V'_\lambda \to W_0,
\]
which we denote by $P^\alpha_0$. Note that $\Ker \pi_0 \supset \range N$ 
since $N_0=0$. 
Hence by Lemma~\ref{lem:stability}, $\pi_0 \circ P'$ is surjective
and hence so is $P^\alpha_0$.
Also we define $Q^\alpha_0 \colon W_0 \to V'_0$ by the composite
\[
W_0 \xrightarrow{\text{inclusion}} W \xrightarrow{Q'} V' 
\xrightarrow{\text{projection}} V'_0,
\]
which is injective since $W_0 \subset \Ker N$.
Clearly the pair $(Q^\alpha_0,P^\alpha_0)$ satisfies
\[
P^\alpha_0 Q^\alpha_0 = (P' Q')_{00} = P_0 Q_0 -\alpha\,\unit_{W_0}.
\]
For $\lambda \neq 0$, let
$(V_\lambda,W^\alpha_\lambda,N^\alpha_\lambda,
Q^\alpha_\lambda,P^\alpha_\lambda)$
be the canonical datum for the system
\[
\lambda(z)\,\unit_{V_\lambda} + \frac{\Gamma_\lambda}{z} 
- \frac{d_\lambda \alpha}{z}\,\unit_{V_\lambda}
\in \EE_k(V^\lambda).
\]
By Remark~\ref{rem:canonical} we have 
$(W^\alpha_\lambda,N^\alpha_\lambda)=(W_\lambda,N_\lambda)$ and
\[
%Q^\alpha_\lambda =  \begin{pmatrix} 
%\lambda_{d_\lambda}\,\unit_{V_\lambda} & 
%\lambda_{d_\lambda-1}\,\unit_{V_\lambda} & 
%\cdots & 
%\lambda_2\,\unit_{V_\lambda} &
%\Gamma_\lambda-d_\lambda \alpha \end{pmatrix}, \quad
Q^\alpha_\lambda = Q_\lambda - 
\begin{pmatrix} 
0\, & \cdots & 0\, &
d_\lambda \alpha\,\unit_{V_\lambda} \end{pmatrix}, \quad
P^\alpha_\lambda =P_\lambda.
\] 
Now identifying $V'_\lambda$ with $V_\lambda$ 
by using the injection $Q'|_{\Ker N_\lambda}$ for $\lambda \neq 0$, set
\[
(V',W,N,Q^\alpha,P^\alpha) :=
(V'_0,W_0,0,Q^\alpha_0,P^\alpha_0) \oplus \bigoplus_{\lambda \neq 0} 
(V_\lambda,W_\lambda,N_\lambda,Q^\alpha_\lambda,P^\alpha_\lambda).
\]
Then it is stable by Remark~\ref{rem:canonical} and satisfies
\begin{equation}\label{eq:mc-normal}
\Phi_N(Q^\alpha,P^\alpha) =  
\frac{Q^\alpha_0 P^\alpha_0}{z} \oplus
\bigoplus_{\lambda \neq 0} 
\Bigl( \lambda(z)\,\unit_{V_\lambda} + \frac{\Gamma_\lambda}{z} 
- \frac{d_\lambda \alpha}{z}\,\unit_{V_\lambda} \Bigr)  
\in \EE_k(V').
\end{equation}
Furthermore, for $\lambda \neq 0$ we have
\begin{align*}
P^\alpha_\lambda Q^\alpha_\lambda - 
P_\lambda Q_\lambda + \alpha\,\unit_{W_\lambda} &= 
\unit_{V_\lambda} \otimes 
\operatorname{diag}
\bigl( \alpha, \cdots , \alpha, (1-d_\lambda)\alpha \bigr) \\
&= [N_\lambda, X_\lambda],
\end{align*}
where 
\[
X_\lambda := \unit_{V_\lambda} \otimes \left(\, \begin{matrix}
0 \quad &  & \cdots &  & 0 \\
\alpha \quad & \ 0 \ &  & &  \\
0 \quad & \ 2\alpha \ & \ddots & & \vdots \\
\vdots \quad &  \ddots  & \ddots & 0 & \\
0 \quad &  \cdots       & \ 0 \ & (d_\lambda-1) \alpha & 0 
\end{matrix}\,\right).
\]
Thus we obtain 
\[
P^\alpha Q^\alpha - PQ + \alpha\,\unit_W = 
[N, \bigoplus X_\lambda] \in \Ker p_N,
\]
which implies
\[
\mu_N(Q^\alpha,P^\alpha) = \mu_N(Q,P) + p_N(\alpha\,\unit_W) 
\in \OO + p_N(\alpha\,\unit_W).
\]
The result follows.
\end{proof}

\begin{remark}\label{rem:mc-residue}
In the above proof, 
fix a collection of complex numbers 
$\xi_0:=0$, $\xi_1, \dots ,\xi_n$ satisfying 
\[
(\Gamma_0-\xi_0\,\unit_{V_0})(\Gamma_0 -\xi_1\,\unit_{V_0}) 
\cdots (\Gamma_0 - \xi_n\,\unit_{V_0}) =0.
\]
Then it is well-known \cite[\S2]{Cra-par} that the numbers  
\[
\rank \bigl[ (\Gamma_0 -\xi_0\,\unit_{V_0}) 
\cdots (\Gamma_0 -\xi_i\,\unit_{V_0}) \bigr], 
\quad i=0,\dots ,n
\]
characterize the conjugacy class of $\Gamma_0$.
Now consider the matrix $P_0 Q_0$, 
which is by definition the endomorphism of 
$W_0=V_0/\Ker \Gamma_0 \simeq \range \Gamma_0$ 
induced from $\Gamma_0$. It satisfies
\begin{align*}
\rank \prod_{j=1}^i (P_0 Q_0 - \xi_j\,\unit_{W_0}) &=
\rank \prod_{j=1}^i (\Gamma_0|_{\range \Gamma_0} 
- \xi_j\,\unit_{\range \Gamma_0}) \\
&=\rank \prod_{j=0}^i (\Gamma_0 - \xi_j\,\unit_{V_0}), \quad
i=1,2,\dots ,n.
\end{align*}
Hence the matrix $P_0^\alpha Q_0^\alpha = P_0 Q_0 -\alpha\,\unit_{W_0}$ 
satisfies 
\[
\rank \prod_{j=1}^i 
(P_0^\alpha Q_0^\alpha + \alpha\,\unit_{W_0} - \xi_j\,\unit_{W_0}) =
\rank \prod_{j=0}^i (\Gamma_0 - \xi_j\,\unit_{V_0}), \quad 
i=1,2,\dots ,n.
\]
Since $P^\alpha_0$ is surjective and $Q^\alpha_0$ is injective, 
the above implies the following equalities for 
the matrix $\Gamma^\alpha_0 := Q_0^\alpha P_0^\alpha$:
\begin{align*}
\rank \Gamma_0^\alpha &= \dim W_0 = \rank \Gamma_0, \\
\rank \Bigl[ \Gamma_0^\alpha \prod_{j=1}^i 
(\Gamma_0^\alpha + \alpha\,\unit_{V'_0} - \xi_j\,\unit_{V'_0}) \Bigr] &=
\rank \prod_{j=0}^i (\Gamma_0 - \xi_j\,\unit_{V_0}), \quad 
i=1,2,\dots ,n,
\end{align*}
which characterize the conjugacy class of $\Gamma_0^\alpha$.
Therefore the normal form given by \eqref{eq:mc-normal} 
can be computed from that contained in 
$\Phi_N \bigl( \mu_N^{-1}(\OO )^{N\st}_V/G_N \bigr)$ and $\alpha,\, V'$.
Note that in the extreme case $\Gamma_0=0$, 
the first equality in the above implies $\Gamma_0^\alpha =0$.
\end{remark}

\begin{proposition}\label{prop:mc-sing}
Suppose that $(V,A) \in \lDk$ and $\alpha \in \EE_\vel(\C)$ 
satisfy all the assumptions in Corollary~\ref{cor:mc-property2},
and additionally 
that $(V,A)$ has a normal form at some $t \in D$.
Then the middle convolution $\mc_\alpha(V,A)$ 
also has a normal form at the same $t$ with the same nonzero spectra. 
\end{proposition}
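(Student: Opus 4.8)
The plan is to localize at the point $t$ and to reduce the statement to Lemma~\ref{lem:mc-sing}. Write $\mc_\alpha=\HD\circ\add_\alpha\circ\HD$ and put $(V,W,S,T,Q,P):=\kappa(V,A)$, so that $\HD(V,A)=(W,B)$ with $B(\zeta)=T+P(\zeta\,\unit_V-S)^{-1}Q$. Let $W=\bigoplus_{t\in D}W_t$ be the generalized eigenspace decomposition for $T$ and $N_t:=(T-t\,\unit_W)|_{W_t}$. Since $\alpha$ vanishes at $\infty$, the leading term $T$ of $B$ — hence the pair $(W_t,N_t)$ — is untouched by $\add_\alpha$. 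Applying $\HD$ once more, $\mc_\alpha(V,A)=\HD(W,B+\alpha)=\Phi\sigma\kappa(W,B+\alpha)$ is expressed through the canonical datum $(W,U^\bullet,\Sigma^\bullet,Q^\bullet,P^\bullet)$ of the $\EE_\vel(W)$-component $P(\zeta\,\unit_V-S)^{-1}Q+\alpha\,\unit_W$ of $B+\alpha$ as $\mc_\alpha(V,A)=(U^\bullet,\ \Sigma^\bullet+P^\bullet(z\,\unit_W-T)^{-1}Q^\bullet)$; its $\EE_{k_t}$-component at $t$ is therefore $\Phi_{N_t}$ applied to the datum obtained by restricting $P^\bullet$ and $Q^\bullet$ to the block $W_t$, with nilpotent part $N_t$. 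The same computation, applied either to $\HD(W,B)\sim(V,A)$ (Theorem~\ref{thm:duality-cat}, using that $(V,A)$ is irreducible and $(V,A)\not\sim(\C,s)$) or directly to $\kappa(V,A)$ together with Proposition~\ref{prop:unique}, describes the $\EE_{k_t}$-component of $A$ at $t$ in the same form through the canonical datum $(W,U^\circ,\Sigma^\circ,Q^\circ,P^\circ)$ of $P(\zeta\,\unit_V-S)^{-1}Q$, and one may arrange $Q^\circ P^\circ=PQ$. So it is enough to treat a single $t$; and we may assume $W_t\neq0$, since otherwise both $A$ and $\mc_\alpha(V,A)$ have vanishing $\EE_{k_t}$-component at $t$ and there is nothing to prove.

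The only nontrivial input is a comparison of moment-map values. For the canonical datum $(W,U,\Sigma,Q_0,P_0)$ of any $\gamma\in\EE_\vel(W)$ one has $\Res_{\zeta=\infty}\gamma=-Q_0P_0\in\End(W)$; applying this to $\gamma=P(\zeta\,\unit_V-S)^{-1}Q$ and to $\gamma+\alpha\,\unit_W$, whose residues at $\infty$ differ by the scalar $r\,\unit_W$ with $r:=\Res_{\zeta=\infty}\alpha$, gives $Q^\bullet P^\bullet=Q^\circ P^\circ-r\,\unit_W$. Taking the $W_t$-block and applying $p_{N_t}$, this says precisely that the $G_{N_t}$-moment value $\mu_{N_t}$ of the datum at $t$ attached to $\mc_\alpha(V,A)$ equals that attached to $A$ plus $p_{N_t}(r\,\unit_{W_t})$. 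Writing $\OO$ for the $G_{N_t}$-coadjoint orbit through the latter, Theorem~\ref{thm:geom} identifies the truncated formal type of $A$ at $t$ with $\Phi_{N_t}\bigl(\mu_{N_t}^{-1}(\OO)_V^{N_t\st}/G_{N_t}\bigr)$ and that of $\mc_\alpha(V,A)$ at $t$ with $\Phi_{N_t}\bigl(\mu_{N_t}^{-1}(\OO+p_{N_t}(r\,\unit_{W_t}))_{U^\bullet}^{N_t\st}/G_{N_t}\bigr)$.

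Now I would apply Lemma~\ref{lem:mc-sing} with $(W,N,\OO,V,V',\alpha)$ there taken to be $(W_t,N_t,\OO,V,U^\bullet,r)$. Its hypothesis (a) — that $\Phi_{N_t}\bigl(\mu_{N_t}^{-1}(\OO)_V^{N_t\st}/G_{N_t}\bigr)$ contains a normal form — is exactly the assumption that $(V,A)$ has a normal form at $t$. Its hypothesis (b) — that $\mu_{N_t}^{-1}(\OO+p_{N_t}(r\,\unit_{W_t}))_{U^\bullet}^{N_t\st}$ is nonempty — holds because $\mc_\alpha(V,A)$ is irreducible by Corollary~\ref{cor:mc-property2} (all of whose hypotheses are in force), hence its canonical datum is stable by Lemma~\ref{lem:irred} and Lemma~\ref{lem:irr-st}, and stability is local at each eigenvalue of $T$ by Lemma~\ref{lem:stability}; so the $W_t$-restricted datum of $\mc_\alpha(V,A)$ is a stable point with the prescribed moment value. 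The lemma then gives that the truncated formal type of $\mc_\alpha(V,A)$ at $t$ contains a normal form with the same nonzero spectra as the one of $A$ at $t$, which is the assertion; if one also wants to track the $\lambda=0$ part (the residue), Remark~\ref{rem:mc-residue} records its transformation.

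The argument is in the end mostly bookkeeping: the single genuine step is the scalar-shift identity $Q^\bullet P^\bullet=Q^\circ P^\circ-r\,\unit_W$ and its translation into a shift of the $G_{N_t}$-moment value. The main risk lies in tracking the role exchanges hidden in $\sigma$, $\kappa$ and $\Phi$ so that on both sides one is comparing against the same $W_t$, $N_t$ and $p_{N_t}$, and in verifying the normalization $\Res_{\zeta=\infty}\gamma=-Q_0P_0$ for canonical data and the local-at-$t$ form of stability; granting these, the statement follows from Lemma~\ref{lem:mc-sing}.
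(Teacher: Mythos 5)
Your proposal is correct and follows essentially the same route as the paper's proof: compare residues at $\zeta=\infty$ of the two $\EE_\vel(W)$-components to get the shift $Q^\bullet P^\bullet=PQ-r\,\unit_W$ of the $G_{N_t}$-moment value, use irreducibility of $\mc_\alpha(V,A)$ (via Lemma~\ref{lem:irred}, Lemma~\ref{lem:irr-st} and the locality of stability from Lemma~\ref{lem:stability}) to see the shifted stable locus is nonempty, and conclude by Lemma~\ref{lem:mc-sing}. The only differences are bookkeeping ones — your detour through the canonical datum $(W,U^\circ,\Sigma^\circ,Q^\circ,P^\circ)$ with $Q^\circ P^\circ=PQ$, where the paper reads off $PQ$ directly from $\kappa(V,A)$, and your explicit treatment of the trivial case $W_t=0$.
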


\begin{proof}
Set $(V,W,S,T,Q,P) :=\kappa(V,A)$ and
\[
(W,V',T,S',P',Q'):=\kappa \circ \add_\alpha \circ \HD (V,A).
\]
By definition, we have $\mc_\alpha(V,A)=(V',S'+Q'(z\,\unit_W -T)^{-1}P')$
and
\[
P(\zeta\,\unit_V -S)^{-1}Q + \alpha(\zeta)\,\unit_W 
= P'(\zeta\,\unit_{V'}-S')^{-1}Q'.
\]
Taking the residue at $\zeta=\infty$ on both sides, we obtain
\[
-PQ + \Res_{\zeta=\infty} \alpha(\zeta)\,\unit_W = -P'Q',
\]
and further taking the projection 
$\gl(W) \to \gl(W_t) \xrightarrow{p_{N_t}} \g_{N_t}^*$ 
on both sides, we obtain
\[
\mu_{N_t}(Q_t, P_t) + 
p_{N_t} \Bigl( \Res_{\zeta=\infty}\alpha(\zeta)\,\unit_{W_t} \Bigr)
=\mu_{N_t}(Q'_t,P'_t),
\]
where $Q_t,P_t,Q'_t,P'_t$ are the block components of 
$Q,P,Q',P'$ relative to the decomposition $W=\bigoplus_t W_t$ as usual.
Set $\beta := \Res_{\zeta=\infty}\alpha(\zeta)$ and 
let $\OO$ be the $G_{N_t}$-coadjoint orbit 
through $\mu_{N_t}(Q_t,P_t)$. 
Then the above relation implies that the set
\[
\mu_{N_t}^{-1}\bigl( \OO + p_{N_t}(\beta\,\unit_W)\bigr)_{V'}^{N_t\st}/G_{N_t}
\]
is non-empty, because $\mc_\alpha(V,A)$ is irreducible and hence 
$(Q'_t,P'_t)$ gives a point in the above by Lemma~\ref{lem:irr-st}.
Since $\OO, V, V'$ and $\beta$ 
satisfy all the conditions in Lemma~\ref{lem:mc-sing},
the image of the above set under $\Phi_{N_t}$ 
contains a normal form which has the same nonzero spectra as that in 
$\Phi_{N_t} \bigl( \mu_{N_t}^{-1}( \OO )_V^{N_t\st}/G_{N_t} \bigr)$.
\end{proof}

\begin{remark}\label{rem:mc-sing-explicit}
In the situation of Proposition~\ref{prop:mc-sing},
the proof of Lemma~\ref{lem:mc-sing}, 
more specifically \eqref{eq:mc-normal}, 
together with Remark~\ref{rem:mc-residue}
enables us to compute the normal form of $\mc_\alpha(V,A)$ 
from that of $(V,A)$ and the rank of $\mc_\alpha(V,A)$. 
\end{remark}

\begin{remark}\label{rem:mc-sing-infty}
Suppose that $(V,A)$ and $\alpha$ satisfy all the assumptions 
in Corollary~\ref{cor:mc-property2}, 
and additionally $(V,A)$ has a normal form at $\infty$ 
(see Remark~\ref{rem:HTL-infty}).
Then it is easy to see that 
$\mc_\alpha(V,A)$ has a normal form at $\infty$ 
if and only if $\alpha$ is Fuchsian.
Moreover, in this case,
the normal form can be computed as follows.
Let us use the same notation here as in the above proof and  
let $V=\bigoplus_{s \in E} V_s,\,V'=\bigoplus_{s \in E} V'_s$ 
be the eigenspace decomposition 
for $S,S'$ respectively.
Then writing $\alpha(\zeta)=\sum_{s \in E} (\zeta -s)^{-1}\alpha_s$, 
we have
\[
P_s Q_s+\alpha_s\,\unit_W = P'_s Q'_s, \quad s \in E,
\]
where $Q_s,P_s,Q'_s,P'_s$ are the block components of $Q,P,Q',P'$ 
relative to the decomposition $V=\bigoplus_s V_s,\,V'=\bigoplus_s V'_s$.
Set $\Omega_s := Q_s P_s,\, \Omega'_s := Q'_s P'_s$.
Then the normal forms of $(V,A)$ and $\mc_\alpha(V,A)$ at $\infty$ 
are described as
\[
\bigoplus_{s \in E}\bigl( -sw^{-2}\,\unit_{V_s} 
- \Omega_s w^{-1} \bigr), \quad 
\bigoplus_{s \in E}\bigl( -sw^{-2}\,\unit_{V_s} 
- \Omega'_s w^{-1} \bigr).
\]
Note that $P_s, P'_s$ are injective and $Q_s, Q'_s$ are surjective 
because the data $(W,V,S,P,Q)$ and $(W,V',S',P',Q')$ are both stable.
Fix $s \in E$ and let $\eta_1, \dots ,\eta_m$ 
be complex numbers satisfying
\[
(\Omega_s -\eta_1 \unit_{V_s}) \cdots (\Omega_s -\eta_m\,\unit_{V_s})=0.
\]
Set $\eta_0 :=0$. 
Then an argument similar to that in Remark~\ref{rem:mc-residue} shows 
\begin{align*}
\dim V_s &= \rank P_s Q_s = \rank (P'_s Q'_s -\alpha_s\,\unit_W ), \\
\rank \prod_{j=1}^i (\Omega_s - \eta_j\,\unit_W )
&= \rank \prod_{j=0}^i 
(P_s Q_s -\eta_j\,\unit_W ) \\
&= \rank \prod_{j=0}^i 
(P_s Q_s -\alpha_s\,\unit_W -\eta_j\,\unit_W ), 
\quad i=1,2, \dots ,m.
\end{align*}
These characterize the conjugacy class of $P'_s Q'_s$.
If none of $\eta'_i :=\alpha_s + \eta_i,\, i=0,\dots ,m$ are zero,
$P'_s Q'_s$ is invertible and hence 
the above equalities characterize the matrix $\Omega'_s$ also.
If there is some $l$ such that $\eta'_l =0$, 
then for $i=0,1, \dots ,m$, we have
\begin{align*}
\rank \prod_{\genfrac{}{}{0pt}{2}{j \neq l,}{0 \leq j \leq i}} 
\bigl( \Omega'_s - \eta'_j\,\unit_{V'_s} \bigr) 
&= 
\rank \Bigl[ \prod_{\genfrac{}{}{0pt}{2}{j \neq l,}{0 \leq j \leq i}}
\bigl( P_s Q_s - \eta'_j\,\unit_W \bigr) |_{\range P_s Q_s} \Bigr] \\
&=
\left\{
\begin{aligned}
&\rank \Bigl[ P_s Q_s \prod_{j=0}^i 
\bigl( P_s Q_s - \eta'_j\,\unit_W \bigr) \Bigr] & i<l, \\
&\rank \prod_{j=0}^i 
\bigl( P_s Q_s - \eta'_j\,\unit_W \bigr) & i \geq l,
\end{aligned}\right.
\end{align*}
which characterizes the conjugacy class of $\Omega'_s$.
\end{remark}

\begin{remark}\label{rem:globalmc}
We can further generalize the middle convolution.
Let us recall the category $\mathcal{D}$ 
introduced in Remark~\ref{rem:globaldual}. 
For any rank 1 object $(\C,\alpha) \in \mathcal{D}$,
we can define the addition $\add_\alpha$, 
and hence the middle convolution $\mc_\alpha=\HD \circ \add_\alpha \circ \HD$ 
with $\alpha$, as endo-functors of $\mathcal{D}$.
However decomposing the parameter as $\alpha(\zeta)=a + \alpha^0(\zeta)$, 
where $a:=\lim_{\zeta =\infty} \alpha(\zeta)$, 
we can easily see that 
the first component $a$ 
just plays the role of a translation of coordinate $\zeta$ 
in the middle convolution;
$\mc_\alpha(V,A)$ is obtained from $\mc_{\alpha^0}(V,A)$ 
by the coordinate change $\zeta \mapsto \zeta -a$.
Therefore only the case
$\alpha(\zeta)=\alpha^0(\zeta) \in \EE_\vel(\C)$ for some $E, \vel$ 
is essential.
\end{remark}

\section{Generalized Katz's algorithm}\label{sec:Katz}

In this section we generalize Katz's algorithm as 
an application of our middle convolution.
Hereafter we assume $E=\{ 0 \}$ and $\vel=(1)$.
Hence an object of the category $\lDk$ is just a pair $(V,A)$ of 
a finite-dimensional $\C$-vector space $V$ and a system $A(z) \in \EE_\vek(V)$.
We denote by $\DDk^0$ the full subcategory of $\lDk$ 
consisting of objects $(V,A)$ satisfying $\Res_{z=\infty} A(z)=0$, 
namely, that $A(z)$ has no singularity at $\infty$.
For $(V,A) \in \DDk^0$, 
we denote by $\bO(A) \subset \EE_\vek(V)$
the $G_\vek(V)$-coadjoint orbit through $A(z)$.
Note that if $(V,A)$ is irreducible, 
then it represents a point in the space $\Mreg(\bO(A),0)$.

\begin{definition}\label{dfn:rigid}
An irreducible pair $(V,A) \in \DDk^0$ is said to be {\em naively rigid} if 
the space $\Mreg(\bO(A),0)$ consists of only one point.
\end{definition}

Note that the dimension of $\Mreg(\bO,0)$ can be computed 
(if it is nonempty) as
\[
\dim \Mreg(\bO,0) = \dim \bO - 2\dim \GL(V) +2,
\]
because $\Mreg(\bO,0)$ is an open subset of the symplectic quotient 
of $\bO$ by the action of $\GL(V)$ which  
acts on irreducible systems 
with trivial stabilizer $\C^\times$ by Schur's lemma.

\begin{remark}\label{rem:rigidity}
The reason why we say `naively rigid', not just `rigid', 
is the following. 
Katz originally said an irreducible local system 
on $X:=\CP^1 \setminus D$ to be rigid 
if it is determined up to isomorphism by 
its local monodromies around the points in $D$.
A natural generalization of Katz's rigidity to the irregular singular case
was introduced by Bloch-Esnault~\cite{BE},
who said an irreducible algebraic connection 
on $X$ to be rigid 
if it is determined up to isomorphism by 
its {\em formal {\rm (}meromorphic{\rm )} type} on $D$.
On the other hand, in the above we consider 
the {\em truncated formal type} $\bO(A)$, 
not the formal meromorphic type.

Nevertheless, in a good situation, there is an implication 
between the two notions.
Suppose that a pair $(V,A) \in \DDk^0$ has a reduced normal form 
$\Lambda_t(z)=\sum \Lambda_{t,i}z^{-i}$
(see Remark~\ref{rem:unramified}) at each $t \in D$.
Then $\Res_{z=\infty} A(z)=0$ implies
\[
\sum_{t \in D} \tr \Lambda_{t,1} = \sum_{t \in D} \tr \Res_{z=t} A(z) =0,
\]
since $\tr \Res_{z=0} \colon \g^*_k(V) \to \C$ 
is $G_k(V)$-invariant.
Let $\alpha_t^{(1)}, \dots ,\alpha_t^{(n)}$ 
be the eigenvalues of $\Lambda_{t,1}$ 
repeated according to multiplicities (so $n=\dim V$).
We assume that any
collection $(I_t)_{t \in D}$ of nonempty proper sub-index sets 
$I_t \subset \{\, 1, \dots ,n \,\}$ of the same cardinality 
satisfies
\[
\sum_{t \in D} \sum_{i \in I_t} \alpha_t^{(i)} \not\in \Z.
\]
This assumption implies that 
the trivial bundle $\VV := X \times V$ together with 
the algebraic connection $\nabla_A:=d-A(z)dz$ is irreducible 
(and hence that the pair $(V,A)$ is irreducible).
Indeed, suppose that a subbundle of $\VV$ preserved 
by $\nabla_A$ is given.
A trivialization of it then gives 
an algebraic connection
$\nabla_B = d-B(z)dz$ on a trivial bundle $\WW=X \times W$
together with an embedding 
$\phi \colon (\WW,\nabla_B) \hookrightarrow (\VV,\nabla_A)$.
Note that by the assumption and Remark~\ref{rem:unramified}, 
the connection $\nabla_A$ is unramified at any $t \in D$.
Therefore an argument similar to that in \cite[Theorem~6.4]{BV-group} 
shows that at any $t \in D$, 
the connection $\nabla_B$ is also unramified and 
its reduced normal form is given by the restriction of $\Lambda_t$ 
on some subspace of $V$ (which has the same dimension as $W$).
Thus we obtain a subset $I_t \subset \{\, 1, \dots , n\}$ 
of cardinality $\dim W$,
which indexes the eigenvalues of the reduced normal form of $\nabla_B$ at $t$
(repeated according to multiplicities).
Now using the `polar decomposition'
\[
\Aut_{\FF}(\C^m \otimes \FF) = \Aut_{\C[[z]]}(\C^m \otimes \C[[z]]) 
\cdot G \cdot \Aut_{\C[[z]]}(\C^m \otimes \C[[z]]), 
\]
where 
\[
G=\{\,\operatorname{diag}(z^{r_1}, \dots ,z^{r_m}) \mid 
r_i \in \Z,\, r_1 \leq \cdots \leq r_m \,\},
\]
we have
\[
0=\tr \Res_{z=\infty} B(z)= -\sum_{t \in D} \tr \Res_{z=t} B(z) 
\in -\sum_{t \in D} \sum_{i \in I_t} \alpha_t^{(i)} + \Z.
\]
The assumption implies $W=0$ or $W=V$. 
Hence $(\VV, \nabla_A)$ is irreducible.

Under those assumptions, we can show that 
if $(\VV, \nabla_A)$ is rigid, 
then the pair $(V,A)$ is naively rigid, as follows.
Let $(V,A') \in \DDk^0$ be an irreducible pair satisfying $\bO(A')=\bO(A)$.
Then by Remark~\ref{rem:unramified},
the $\EE_{k_t}(V)$-component $A'_t$ of $A'$ is equivalent to 
$\Lambda_t$ under the $\Aut_{\C[[z]]}(V \otimes \C[[z]])$-action 
for any $t \in D$.
On the other hand, the above argument shows that 
the algebraic connection $(\VV, \nabla_{A'})$ is irreducible.
Hence by the rigidity, there is an isomorphism 
$(\VV, \nabla_{A'}) \simeq (\VV, \nabla_A)$,
which gives for each $t \in D$ an element of $\Aut_{\FF}(V \otimes \FF)$ 
connecting $A'_t$ and the $\EE_{k_t}(V)$-component $A_t$ of $A$. 
Here, because $A_t$ and $A'_t$ are both equivalent 
to $\Lambda_t$ under the $\Aut_{\C[[z]]}(V \otimes \C[[z]])$-action, 
\cite[Theorem~7.2]{BV-group} implies that it is in fact an element 
of $\Aut_{\C[[z]]}(V \otimes \C[[z]])$.
Hence the isomorphism extends to a holomorphic bundle isomorphism
$\CP^1 \times V \to \CP^1 \times V$, which must be constant
and hence gives $(V,A') \sim (V,A)$.
\end{remark}

Recall that any element $A(z) \in \EE_k(V)$ defines 
a matrix $\widehat{A} \in \End(V^{\oplus k})$ by \eqref{eq:matrix}.
Then the key lemma is the following:
\begin{lemma}\label{lem:Katz}
For any element $A(z) \in \EE_k(V)$ having a normal form,
there exists $\alpha (z) \in \EE_k(\C)$ such that
\[
\dim Z(A) \leq (\dim V) 
\dim \Ker (\widehat{A} - \widehat{\alpha\,\unit_V}), 
\]
where $Z(A)$ denotes the stabilizer of $A(z)$ in $G_k(V)$.
\end{lemma}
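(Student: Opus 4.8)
The plan is to reduce to the case where $A(z)=\Lambda(z)$ is itself a normal form, and then induct on $k$.

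\emph{Reduction to a normal form.} Write $A=g\cdot\Lambda$ with $\Lambda$ a normal form and $g\in G_k(V)$. The description of the coadjoint $G_k(V)$-action recalled in the proof of Proposition~\ref{prop:HTL-unique} immediately gives $g\cdot(\alpha\,\unit_V)=\alpha\,\unit_V$ for every $\alpha\in\EE_k(\C)$, since a scalar system commutes with $g(z)$; by linearity of the action, $A-\alpha\,\unit_V$ then lies in the same $G_k(V)$-orbit as $\Lambda-\alpha\,\unit_V$. Now $\dim Z(\cdot)$ is an orbit invariant because the action is by a group, and so is $\dim\Ker\widehat{(\cdot)}$: for $B\in\EE_k(V)$ one has $\dim\Ker\widehat B=k\dim V-\dim W_B$, where $W_B$ is the space of the canonical datum for $B$ (Definition~\ref{dfn:canonical}), and $\dim W_B$ depends only on the orbit of $B$ by Theorem~\ref{thm:AHHP} together with Propositions~\ref{prop:unique} and~\ref{prop:geom}. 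Since $B\mapsto\widehat B$ is additive, these observations reduce the statement to $A=\Lambda$.

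\emph{Base case $k=1$.} Here $\Lambda=\Gamma_0/z$, $\widehat\Lambda=\Gamma_0$, and $Z(\Lambda)$ is the ordinary centralizer of $\Gamma_0$ in $\GL(V)$. Decomposing $V$ into the generalized eigenspaces $W_c$ of $\Gamma_0$ with nilpotent parts $N_c$, the identity $\dim Z(N)=\sum_j(p_j^{\,T})^2$ for a nilpotent $N$ with Jordan partition $p$ (so $p_1^{\,T}=\dim\Ker N$) gives $\dim Z(N_c)\le(\dim W_c)\dim\Ker N_c$, hence $\dim Z(\Gamma_0)=\sum_c\dim Z(N_c)\le(\dim V)\max_c\dim\Ker N_c$. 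Choosing $\alpha(z)=c_0/z$ with $c_0$ an eigenvalue realizing the maximum settles this case, since $\dim\Ker(\widehat\Lambda-\widehat{\alpha\,\unit_V})=\dim\Ker(\Gamma_0-c_0\,\unit_V)=\dim\Ker N_{c_0}$.

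\emph{Inductive step.} Let $v$ run over the leading coefficients of the spectra of $\Lambda$, let $V=\bigoplus_vV^{(v)}$ be the decomposition with $V^{(v)}$ the sum of those $V_\lambda$ whose spectrum has leading coefficient $v$, and put $\Lambda^{(v)}:=(\Lambda-vz^{-k}\,\unit_V)|_{V^{(v)}}$, a normal form in $\EE_{k-1}(V^{(v)})$. Two compatible decompositions drive the argument. First, rerunning the filtration computation in the proof of Proposition~\ref{prop:HTL-unique} with $\Lambda'=\Lambda$ shows $g\in Z(\Lambda)$ iff $g_i\in\frh_{i+1}$ for all $i$, so $\dim Z(\Lambda)=\sum_{j=1}^k\dim\frh_j$; as $\frh_k=Z(\Lambda_k)=\bigoplus_v\gl(V^{(v)})$ and the $\frh_j$ with $j<k$ split block-diagonally into the corresponding terms for the $\Lambda^{(v)}$, one gets
\[
\dim Z(\Lambda)=\sum_v\Bigl(\dim Z(\Lambda^{(v)})+(\dim V^{(v)})^2\Bigr),
\]
where $Z(\Lambda^{(v)})$ is understood in $G_{k-1}(V^{(v)})$. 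Second, for $\alpha=\alpha'+vz^{-k}$ with $\alpha'\in\EE_{k-1}(\C)$, additivity of $\widehat{(\cdot)}$ and its compatibility with direct sums of systems make $\widehat{\Lambda-\alpha\,\unit_V}$ block-diagonal along $V=\bigoplus_{v'}V^{(v')}$; the $v'$-block has invertible leading coefficient $(v'-v)\,\unit$ for $v'\ne v$ and hence zero kernel (Remark~\ref{rem:canonical}~(a)), while the $v$-block equals $\widehat{\Lambda^{(v)}-\alpha'\,\unit_{V^{(v)}}}$, whose kernel dimension is the same whether computed in $\EE_k$ or in $\EE_{k-1}$ by Remark~\ref{rem:canonical}~(b). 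Feeding in $\alpha'$ from the inductive hypothesis applied to $\Lambda^{(v)}$ yields
\[
\dim\Ker(\widehat\Lambda-\widehat{\alpha\,\unit_V})\ \ge\ \dim V^{(v)}+\frac{\dim Z(\Lambda^{(v)})}{\dim V^{(v)}}.
\]
Writing $n=\dim V$, $n_v=\dim V^{(v)}$, $z_v=\dim Z(\Lambda^{(v)})$, the desired inequality $n\,\dim\Ker(\widehat\Lambda-\widehat{\alpha\,\unit_V})\ge\dim Z(\Lambda)$ becomes $n_v+z_v/n_v\ge\frac1n\sum_{v'}(z_{v'}+n_{v'}^2)$ for some $v$; since the right-hand side is exactly the $(n_v/n)$-weighted average over $v$ of the numbers $n_v+z_v/n_v$, some $v$ indeed satisfies it, completing the induction.

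\emph{Main obstacle.} The substance lies in the dimension bookkeeping of the inductive step: one must see that both $\dim Z(\Lambda)$ and $\dim\Ker(\widehat\Lambda-\widehat{\alpha\,\unit_V})$ decompose compatibly along the leading-coefficient splitting $V=\bigoplus_vV^{(v)}$. Extracting $\dim Z(\Lambda)=\sum_j\dim\frh_j$ from the proof of Proposition~\ref{prop:HTL-unique} and checking the block-diagonal splitting of the $\frh_j$ is the delicate point; once these are in place the averaging inequality closes everything, with equality attained already in the most degenerate cases (e.g.\ $\Lambda=0$), so no slack can be wasted along the way.
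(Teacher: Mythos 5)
Your argument is essentially correct, and it takes a genuinely different route from the paper's. The paper works directly at level $k$: it characterizes $Z(\Lambda)$ by $g_{i-1}\in\frh_i$, computes $\dim Z(\Lambda)$ in closed form using the simultaneous eigenspaces $V(\alpha_i,\dots,\alpha_k)$ together with the Oshima-type decomposition $V^j(\alpha)$ of each $\Gamma_\lambda$, computes $\dim \Ker(\widehat{\Lambda}-\widehat{\alpha\,\unit_V})$ by the same eigenspace data, and then closes the estimate with the inequality $\dim V^j(\alpha)\le\dim V(\alpha)$ and a choice of $\alpha$ maximizing the kernel dimension. You instead induct on $k$ along the eigenspace decomposition of the leading coefficient $\Lambda_k$, delegate the regular-singular part (the centralizers of the $\Gamma_\lambda$) to the base case $k=1$ via the partition-transpose bound $\dim Z(N)=\sum_j(p_j^T)^2\le\dim\Ker N\cdot\dim W$, and replace the maximization by a weighted-average (pigeonhole) argument. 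Your splittings $\dim Z(\Lambda)=\sum_v\bigl(\dim Z(\Lambda^{(v)})+(\dim V^{(v)})^2\bigr)$ and the block-diagonal analysis of $\widehat{\Lambda-\alpha\,\unit_V}$ are both correct, as is the careful reduction to a normal form (the paper only invokes invariance of $\dim Z$, while the kernel dimension also needs the orbit-invariance you supply via the canonical datum). This is an attractive, somewhat more structural proof of the same inequality.

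There is, however, one misstated justification that you must repair, because as written it is false and, taken literally, would break the induction. You claim that the kernel dimension of $\widehat{\Lambda^{(v)}-\alpha'\,\unit_{V^{(v)}}}$ ``is the same whether computed in $\EE_k$ or in $\EE_{k-1}$ by Remark~\ref{rem:canonical}~(b)''. What Remark~\ref{rem:canonical}~(b) actually gives is that the space $W$ of the canonical datum is independent of the truncation level; since $\dim W=k\dim V-\dim\Ker\widehat{A}$ at level $k$, the correct relation for $B\in\EE_{k-1}(V^{(v)})$ viewed in $\EE_k(V^{(v)})$ is $\dim\Ker\widehat{B}^{(k)}=\dim\Ker\widehat{B}^{(k-1)}+\dim V^{(v)}$ (concretely, $\widehat{B}^{(k)}$ is multiplication by $z\cdot z^{k-1}B(z)$ on $S_k\otimes V^{(v)}$, which picks up an extra copy of $V^{(v)}$ in its kernel). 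This extra $\dim V^{(v)}$ is precisely the first term of your displayed inequality $\dim\Ker(\widehat{\Lambda}-\widehat{\alpha\,\unit_V})\ge\dim V^{(v)}+\dim Z(\Lambda^{(v)})/\dim V^{(v)}$, so with the remark interpreted correctly your inequality and the averaging step go through verbatim; but with the kernels literally ``the same'' you would only get the bound $\dim Z(\Lambda^{(v)})/\dim V^{(v)}$, and the weighted-average argument then fails already for $\Lambda=v_1z^{-2}\,\unit_{V_1}\oplus v_2z^{-2}\,\unit_{V_2}$ with all $\Gamma_\lambda=0$, where equality in the lemma holds and no slack is available. Fix the citation (or prove the shift formula directly as above) and the proof is complete.
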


\begin{proof}
Since $\dim Z(A)=\dim Z(g\cdot A)$ for any $g(z) \in G_k(V)$, 
we may assume that $A(z)$ itself is a normal form;
\[
A(z)=\bigoplus_{\lambda \in \Sigma} 
\Bigl( \lambda(z)\,\unit_{V_\lambda} + \frac{\Gamma_\lambda}{z} \Bigr)
=\sum_{i=1}^k \Lambda_i z^{-i}.
\]
Then a slight modification of the argument in the proof of 
Proposition~\ref{prop:HTL-unique} shows
that an element $g(z) =\sum_i g_iz^i \in G_k(V)$ stabilizes
$A(z)$ if and only if
\[
g_{i-1} \in \frh_i:=\Ker (\ad_{\Lambda_i}) 
\cap \cdots \cap \Ker (\ad_{\Lambda_k}), 
\quad i=1,2,\dots ,k.
\]
Hence $\dim Z(A) = \sum_{i=1}^k \dim \frh_i$.
Similarly, one can easily see that 
$v_i \in V,\, i=1,\dots ,k$ satisfy the equation
\[
\left(\, \begin{matrix}
\Lambda_k & \Lambda_{k-1} & \cdots & \quad\Lambda_1 \\
          & \Lambda_k     & \ddots & \quad \vdots \\
          &       & \ddots & \quad\Lambda_{k-1} \\
0         &       &        & \quad\Lambda_k \end{matrix} \,\right) 
\left(\, \begin{matrix}
v_k \\ v_{k-1} \\ \vdots \\ v_1 \end{matrix} \,\right) =0
\]
if and only if
\[
v_i \in \Ker \Lambda_i \cap \dots \cap \Ker \Lambda_k,\, i=1,2,\dots ,k.
\]
Let $V(\alpha_i,\dots ,\alpha_k)$ be the simultaneous eigenspace 
for $(\Lambda_i,\dots ,\Lambda_k)$ with eigenvalue $(\alpha_i,\dots ,\alpha_k)$.
% Note that $V(\lambda_2, \dots ,\lambda_k)$ is just $V_\lambda$ with 
% $\lambda=\sum_{i \geq 2} \lambda_i z^{-i} \in \Sigma$, 
% and $V(\lambda_1,\dots ,\lambda_k)$ is 
% the $\lambda_1$-eigenspace of $\Gamma_\lambda$.
Replacing $\Lambda$ with $\Lambda -\alpha\,\unit_V$ in the above equation, 
we then obtain
\begin{equation}\label{eq:eigenspace}
\dim \Ker (\widehat{A} - \widehat{\alpha\,\unit_V})
= \sum_{i=1}^k \dim V(\alpha_i,\dots ,\alpha_k).
\end{equation}
On the other hand, we have
\[
\frh_i = \bigoplus_{(\alpha_i,\dots ,\alpha_k)}
\gl \bigl( V(\alpha_i,\dots ,\alpha_k) \bigr), \quad i \geq 2.
\]
To describe $\dim \frh_1$ analogously, 
we apply the fact mentioned in Remark~\ref{rem:oshima-normal} to 
the matrix $\Gamma_\lambda$ for each 
$\lambda=\sum_{i \geq 2} \lambda_i z^{-i} \in \Sigma$.
For any eigenvalue $\lambda_1 \in \C$ of $\Gamma_\lambda$, 
there exist a decomposition of the corresponding generalized eigenspace 
\[
\Ker ( \Gamma_\lambda-\lambda_1\,\unit_{V_\lambda} )^{\dim V_\lambda}
= \bigoplus_{j=1}^{r_{\lambda_1,\lambda_2,\dots ,\lambda_k}}
V^j(\lambda_1,\lambda_2,\dots ,\lambda_k)
\]
with $V^1(\lambda_1,\lambda_2,\dots ,\lambda_k)
=V(\lambda_1,\lambda_2,\dots ,\lambda_k)$ and injections 
\[
\iota \colon V^j(\lambda_1,\lambda_2,\dots ,\lambda_k) \hookrightarrow V^{j-1}(\lambda_1,\lambda_2,\dots ,\lambda_k), \quad
j=2, \dots ,r_{\lambda_1,\dots ,\lambda_k},
\]
such that
the matrix $\Gamma_\lambda-\lambda_1\,\unit_{V_\lambda}$ 
restricted on 
$\Ker ( \Gamma_\lambda-\lambda_1\,\unit_{V_\lambda} )^{\dim V_\lambda}$ 
is written as \eqref{eq:oshima-normal}.
Then we can compute~\cite{Oshima-conn} the dimension of the centralizer $Z(\Gamma_\lambda)$ as
\[
\dim Z(\Gamma_\lambda) = \sum_{\lambda_1} 
\sum_{j=1}^{r_{\lambda_1,\lambda_2,\dots ,\lambda_k}} 
\bigl( \dim V^j(\lambda_1,\lambda_2,\dots ,\lambda_k) \bigr)^2,
\]
whence
\[
\dim \frh_1 = \sum_{\lambda \in \Sigma} 
\dim Z(\Gamma_\lambda) = \sum_{(\lambda_1,\dots ,\lambda_k)} 
\sum_{j=1}^{r_{\lambda_1,\lambda_2,\dots ,\lambda_k}} 
\bigl( \dim V^j(\lambda_1,\lambda_2,\dots ,\lambda_k) \bigr)^2.
\]
Thus writing $V^j(\alpha_1, \dots ,\alpha_k) \equiv V^j(\alpha)$ with 
$\alpha=\sum_{i=1}^k \alpha_i z^{-i}$, we obtain
\begin{equation}\label{eq:centralizer}
\dim Z(A) = \sum_\alpha \sum_j \bigl( \dim V^j(\alpha) \bigr)^2 
+\sum_{i=2}^k \sum_{(\alpha_i,\dots ,\alpha_k)} 
\bigl( \dim V(\alpha_i,\dots ,\alpha_k) \bigr)^2.
\end{equation}

Now we prove the lemma. First we rewrite \eqref{eq:eigenspace} as
\begin{align*}
\dim \Ker (\widehat{A} - \widehat{\alpha\,\unit_V})
&= \dim V(\alpha) +\sum_{i=2}^k \sum_{(\beta_1,\dots ,\beta_{i-1})} \sum_j 
\dim V^j(\beta_1,\dots ,\beta_{i-1},\alpha_i,\dots ,\alpha_k) \\
&= \dim V(\alpha) +\sum_{i=2}^k \sum_\beta \sum_j 
\delta_{\alpha_i,\beta_i} \cdots \delta_{\alpha_k,\beta_k}
\dim V^j(\beta),
\end{align*}
where $\delta_{\alpha_i,\beta_i}$ denotes Kronecker's delta symbol.
Setting 
\[
\delta(\alpha,\beta;j) := \delta_{j,1}\delta_{\alpha,\beta} + \sum_{i=2}^k 
\delta_{\alpha_i,\beta_i} \cdots \delta_{\alpha_k,\beta_k}, \quad
\alpha=\sum_{i=1}^k \alpha_iz^{-i},\, 
\beta=\sum_{i=1}^k \beta_i z^{-i},
\]
we can describe the above equality as
\[
\dim \Ker (\widehat{A} - \widehat{\alpha\,\unit_V}) = 
\sum_{\beta,j} \delta(\alpha,\beta;j) \dim V^j(\beta).
\]
Next we rewrite \eqref{eq:centralizer} as
\begin{align*}
\dim Z(A) 
&= \sum_\alpha \sum_j \bigl( \dim V^j(\alpha) \bigr)^2 
+\sum_{i=2}^k \sum_{(\alpha_i,\dots ,\alpha_k)} 
\Bigl( \sum_\beta \sum_j 
\delta_{\alpha_i,\beta_i} \cdots \delta_{\alpha_k,\beta_k}
\dim V^j(\beta) \Bigr)^2 \\
&= \sum_\alpha \sum_j \bigl( \dim V^j(\alpha) \bigr)^2 \\
&\qquad +\sum_{i=2}^k \sum_{(\alpha_i,\dots ,\alpha_k)} 
\sum_{\beta,\gamma} \sum_{j,l} 
\delta_{\alpha_i,\beta_i} \cdots \delta_{\alpha_k,\beta_k}
\delta_{\alpha_i,\gamma_i} \cdots \delta_{\alpha_k,\gamma_k}
\dim V^j(\beta) \dim V^l(\gamma)  \\
&= \sum_\alpha \sum_j \bigl( \dim V^j(\alpha) \bigr)^2 
+\sum_{i=2}^k 
\sum_{\beta,\gamma} \sum_{j,l} 
\delta_{\beta_i,\gamma_i} \cdots \delta_{\beta_k,\gamma_k}
\dim V^j(\beta) \dim V^l(\gamma).
\end{align*}
Using the inequality $\dim V^j(\alpha) \leq \dim V(\alpha)$, we obtain
\begin{align*}
\dim Z(A) &\leq \sum_\alpha \sum_j \dim V^j(\alpha) \dim V(\alpha) \\
&\qquad +\sum_{i=2}^k 
\sum_{\beta,\gamma} \sum_{j,l} 
\delta_{\beta_i,\gamma_i} \cdots \delta_{\beta_k,\gamma_k}
\dim V^j(\beta) \dim V^l(\gamma) \\
&= \sum_{\beta,\gamma} \sum_{j,l} 
\bigl( \delta_{l,1} \delta_{\beta,\gamma} 
+ \sum_{i=2}^k \delta_{\beta_i,\gamma_i} \cdots \delta_{\beta_k,\gamma_k} \bigr)
\dim V^j(\beta) \dim V^l(\gamma) \\
&= \sum_{\beta,\gamma} \sum_{j,l} 
\delta(\beta,\gamma;l) \dim V^j(\beta) \dim V^l(\gamma) \\
&= \sum_{\beta,j} \dim \Ker (\widehat{A} - \widehat{\beta\,\unit_V}) \dim V^j(\beta).
\end{align*}
Take $\alpha \in \EE_k(\C)$ to attain the maximum of the values 
$\dim \Ker (\widehat{A} - \widehat{\alpha\,\unit_V})$. 
Then
\[
\dim Z(A) \leq \sum_{\beta,j} 
\dim \Ker (\widehat{A} - \widehat{\alpha\,\unit_V}) \dim V^j(\beta)
=\dim \Ker (\widehat{A} - \widehat{\alpha\,\unit_V}) \dim V,
\]
which is the desired inequality.
\end{proof}

\begin{lemma}\label{lem:Scott}
Suppose that a pair $(V,A) \in \DDk^0$ is irreducible 
and $(V,A) \not\sim (\C,0)$.
Then the rank of $\HD(V,A)$ is greater than or equal to $2 \dim V$.
\end{lemma}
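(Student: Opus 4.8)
The plan is to unwind $\HD$ in the present situation so that its rank becomes visible, and then to deduce the inequality from irreducibility together with the hypothesis $\Res_{z=\infty}A(z)=0$.

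Since $E=\{0\}$ and $\vel=(1)$ we have $S=\lim_{z\to\infty}A(z)=0$, so $\kappa(V,A)=(V,W,0,T,Q,P)$ with $(V,W,T,Q,P)$ the canonical datum for $A(z)$, and $\HD(V,A)=\Phi\circ\sigma\circ\kappa(V,A)=(W,\,T+PQ/\zeta)$. Hence the rank of $\HD(V,A)$ equals $\dim W$, and it suffices to prove $\dim W\ge2\dim V$. By Lemma~\ref{lem:irred} the Harnad datum $(V,W,0,T,Q,P)$ is irreducible; moreover $W\ne0$, since $W=0$ would give $A(z)=Q(z\,\unit_W-T)^{-1}P=0$, and then irreducibility of $(V,A)$ would force $V=\C$, i.e.\ $(V,A)\sim(\C,0)$, contrary to hypothesis.

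Next I would extract two consequences of irreducibility. Because $S=0$, the pairs $(\range Q,W)$ and $(\Ker P,0)$ are visibly subrepresentations of the Harnad datum $(V,W,0,T,Q,P)$; as $W\ne0$, the first is nonzero and hence equals $(V,W)$, so $Q$ is surjective, while the second is proper and hence equals $(0,0)$, so $P$ is injective. Finally, expanding $(z\,\unit_W-T)^{-1}=\sum_{j\ge1}z^{-j}T^{j-1}$ shows that $QP$ is the coefficient of $z^{-1}$ in $A(z)=\Phi_T(Q,P)$, so $\Res_{z=\infty}A(z)=0$ forces $QP=0$, i.e.\ $\range P\subseteq\Ker Q$. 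Combining the three facts,
\[
\dim V=\dim\range P\le\dim\Ker Q=\dim W-\rank Q=\dim W-\dim V,
\]
so $\dim W\ge2\dim V$, as required.

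The argument is short once the definition of $\HD$ has been spelled out, so there is no serious obstacle. The points needing a little care are the identification of $\HD(V,A)$ with $(W,T+PQ/\zeta)$, which is what reduces the assertion to $\dim W\ge2\dim V$, and the verification that $(\range Q,W)$ and $(\Ker P,0)$ really are subrepresentations; this is where the two hypotheses enter, namely $(V,A)\not\sim(\C,0)$ through $W\ne0$ and $\Res_{z=\infty}A(z)=0$ through $QP=0$.
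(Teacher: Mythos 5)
Your proposal is correct and follows essentially the same route as the paper: pass to the irreducible Harnad datum $\kappa(V,A)=(V,W,0,T,Q,P)$ via Lemma~\ref{lem:irred}, use the subrepresentations $(\range Q,W)$ and $(\Ker P,0)$ to get $Q$ surjective and $P$ injective when $W\neq0$, and combine with $QP=-\Res_{z=\infty}A(z)=0$ to get $\dim V=\rank P\le\dim\Ker Q=\dim W-\dim V$. The only (harmless) differences are that you spell out explicitly why the rank of $\HD(V,A)$ is $\dim W$ and why $W\neq0$, the latter being handled in the paper by citing Corollary~\ref{cor:exceptional}.
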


\begin{proof}
By the assumption and Lemma~\ref{lem:irred},
$(V,W,0,T,Q,P):=\kappa(V,A)$ is irreducible.
Since the pairs $(\range Q, W)$ and $(\Ker P,0)$ are both subrepresentations 
of it, we see that $Q$ is surjective and $P$ is injective
if $W \neq 0$, in other words, if $(V,A) \not\sim (\C,0)$ 
(see Corollary~\ref{cor:exceptional}).
On the other hand, we have $QP =-\Res_{z=\infty} A(z) =0$. 
Thus under the assumption $(V,A) \not\sim (\C,0)$, we obtain 
\[
\dim V = \rank P \leq \dim \Ker Q = \dim W - \dim V. 
\]
\end{proof}

\begin{theorem}\label{thm:Katz}
Suppose that a pair $(V,A) \in \DDk^0$ with $\dim V \geq 2$ 
is irreducible, naively rigid, 
and has a normal form at any $t \in D$.
Then there exists a rank 1 system $\alpha(z) \in \EE_\vek(\C)$ 
such that the rank of  
$\mc_{\lambda/\zeta}\circ \add_{-\alpha} (V,A)$, 
where $\lambda = -\Res_{z=\infty} \alpha(z)$, 
is less than $\dim V$.
\end{theorem}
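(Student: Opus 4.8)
The plan is to combine the dimension count for $\Mreg(\bO(A),0)$ with the stabilizer estimate of Lemma~\ref{lem:Katz}, using the Harnad dual to pass from $A(z)$ to a system where the residue at infinity is nonzero. First I would unwind what naive rigidity says numerically: since $(V,A)$ is irreducible and naively rigid, $\dim \Mreg(\bO(A),0)=0$, so by the formula displayed just after Definition~\ref{dfn:rigid},
\[
\dim \bO(A) = 2\dim\GL(V) - 2 = 2(\dim V)^2 - 2.
\]
On the other hand $\dim \bO(A) = \dim\GL(V) - \dim Z(A)$ applied at each $t$; more precisely, writing $A(z)=\sum_{t\in D}A_t(z)$ with $A_t(z)\in\EE_{k_t}(V)$, we have $\bO(A)=\prod_{t\in D}\bO(A_t)$ and $\dim\bO(A_t) = \dim G_{k_t}(V) - \dim Z(A_t) = k_t(\dim V)^2 - \dim Z(A_t)$. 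Combining these gives the identity $\sum_{t\in D}\bigl(k_t(\dim V)^2 - \dim Z(A_t)\bigr) = 2(\dim V)^2 - 2$, i.e.
\[
\sum_{t\in D}\dim Z(A_t) = \Bigl(\sum_{t\in D}k_t - 2\Bigr)(\dim V)^2 + 2.
\]

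Next I would apply Lemma~\ref{lem:Katz} at each $t\in D$: since $A_t(z)$ has a normal form, there is $\alpha_t(z)\in\EE_{k_t}(\C)$ with
\[
\dim Z(A_t) \le (\dim V)\,\dim\Ker(\widehat{A_t}-\widehat{\alpha_t\,\unit_V}).
\]
Assemble $\alpha(z):=\sum_{t\in D}\alpha_t(z)\in\EE_\vek(\C)$ and set $\lambda:=-\Res_{z=\infty}\alpha(z)$. Summing the inequalities and inserting the rigidity identity yields a lower bound
\[
\sum_{t\in D}\dim\Ker(\widehat{A_t}-\widehat{\alpha_t\,\unit_V}) \ \ge\ \frac{\sum_{t\in D}\dim Z(A_t)}{\dim V} \ =\ \Bigl(\sum_t k_t - 2\Bigr)\dim V + \frac{2}{\dim V},
\]
and since the left side is an integer and $\dim V\ge 2$, the fraction forces
\[
\sum_{t\in D}\dim\Ker(\widehat{A_t}-\widehat{\alpha_t\,\unit_V}) \ \ge\ \Bigl(\sum_t k_t - 2\Bigr)\dim V + 1.
\]

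The final step is to identify this kernel count with the rank of the output system. Set $(V,A'):=\add_{-\alpha}(V,A)$, so $A'(z)=A(z)-\alpha(z)\,\unit_V$ and $\Res_{z=\infty}A'(z)=\lambda\,\unit_V$. I would argue that $\mc_{\lambda/\zeta}$ is exactly in the situation analyzed in Example~\ref{ex:ref} (with $\lambda\ne 0$; the degenerate case $\lambda=0$ needs separate handling, and I expect this is the main obstacle — one must check that either rigidity forces $\lambda\ne0$ for a suitable choice of $\alpha$, or that the $\lambda=0$ case can be absorbed by a preliminary shift), so that
\[
\dim V^{\lambda/\zeta} = \dim W' - \dim V,
\]
where $(V,W',T',Q',P')=\kappa(V,A')$ is the canonical datum of $A'(z)$. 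Now $\dim W' = \sum_{t\in D}\dim\widehat{W'_t}/\Ker\widehat{A'_t} = \sum_{t\in D}\bigl(k_t\dim V - \dim\Ker\widehat{A'_t}\bigr)$, and by construction $\widehat{A'_t}=\widehat{A_t}-\widehat{\alpha_t\,\unit_V}$ (using the additivity of the $\widehat{\ \cdot\ }$ construction and the remark that it is compatible with scalar shifts), so $\dim\Ker\widehat{A'_t}=\dim\Ker(\widehat{A_t}-\widehat{\alpha_t\,\unit_V})$. Therefore
\[
\dim V^{\lambda/\zeta} = \Bigl(\sum_t k_t\Bigr)\dim V - \sum_{t\in D}\dim\Ker(\widehat{A_t}-\widehat{\alpha_t\,\unit_V}) - \dim V,
\]
and feeding in the lower bound on the kernel sum gives $\dim V^{\lambda/\zeta} \le (\dim V - 1)\dim V - \dim V \cdot 0 \cdots$; more carefully, $\dim V^{\lambda/\zeta} \le (\sum_t k_t)\dim V - \bigl((\sum_t k_t-2)\dim V + 1\bigr) - \dim V = \dim V - 1 < \dim V$. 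Since $\mc_{\lambda/\zeta}\circ\add_{-\alpha}(V,A) = (V^{\lambda/\zeta},A^{\lambda/\zeta})$ has rank $\dim V^{\lambda/\zeta}$, this is the claimed inequality. The one place requiring genuine care beyond bookkeeping is ensuring the hypotheses of Example~\ref{ex:ref} and of the middle-convolution compatibility (e.g.\ that $\add_{-\alpha}(V,A)$ is still irreducible and not of the exceptional form $(\C,s)$, which follows from $\dim V\ge 2$) are met, and handling the boundary case where the optimal $\alpha$ happens to give $\lambda=0$.
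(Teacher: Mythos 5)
Your dimension count is essentially the paper's: choose $\alpha_t$ by Lemma~\ref{lem:Katz} at each $t$, use naive rigidity to write $2\dim\GL(V)-2=\dim\bO(A)=\sum_t\bigl(\dim G_{k_t}(V)-\dim Z(A_t)\bigr)$, and identify $\dim W=\sum_t\bigl(k_t\dim V-\dim\Ker(\widehat{A}_t-\widehat{\alpha_t\,\unit_V})\bigr)$ via the canonical datum of $A-\alpha\,\unit_V$, obtaining $\dim V\cdot\dim W\le 2\dim\GL(V)-2$, hence $\dim W<2\dim V$ (your integrality refinement is fine but unnecessary). However, you leave unresolved exactly the point you flag yourself: why $\lambda=-\Res_{z=\infty}\alpha(z)\neq 0$. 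This is not a removable technicality. If $\lambda=0$ then $\mc_{\lambda/\zeta}=\mc_0$ acts as the identity on irreducible non-exceptional pairs (Corollary~\ref{cor:mc-property1}), so the rank would not drop at all, and Example~\ref{ex:ref} (whose formula $\dim V^{\lambda/\zeta}=\dim W-\dim V$ rests on $QP=-\lambda\,\unit_V$ being invertible) is simply unavailable. So without this step the conclusion of the theorem is not established.

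The paper closes the gap with Lemma~\ref{lem:Scott}, which you did not invoke: for an irreducible $(V,A')\in\DDk^0$ with $(V,A')\not\sim(\C,0)$, the rank of $\HD(V,A')$ is at least $2\dim V$. If $\lambda$ were $0$, then $A-\alpha\,\unit_V$ would again lie in $\DDk^0$, it is irreducible because $(V,A)$ is, and it is not of rank $1$ since $\dim V\ge 2$; Lemma~\ref{lem:Scott} would then force $\dim W\ge 2\dim V$, contradicting the bound $\dim W<2\dim V$ you already derived. Thus the very same inequality that gives the rank drop also forces $\lambda\neq 0$, after which Example~\ref{ex:ref} yields $\dim V^{\lambda/\zeta}=\dim W-\dim V<\dim V$. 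To complete your proposal you need this (or an equivalent) argument; neither ``rigidity forces $\lambda\neq 0$ directly'' nor ``absorb $\lambda=0$ by a preliminary shift'' is substantiated as written.
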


\begin{proof}
For each $t \in D$, take $\alpha_t(z) \in \EE_{k_t}(\C)$ 
to satisfy the condition in Lemma~\ref{lem:Katz} for $A_t(z)$, 
and set $\alpha(z):= \sum_{t \in D} \alpha_t(z-t)$. 
Let $W$ be the underlying vector space of $\HD \circ \add_{-\alpha} (V,A)$.
Then, since $(V,A)$ is naively rigid, 
using the definition of $W$ we have
\begin{align*}
2 \dim \GL(V) -2 &= \dim \bO(A) \\
&= \sum_{t \in D} \bigl( \dim G_{k_t}(V) - \dim Z(A_t) \bigr) \\
&= \dim \GL(V) \sum_{t \in D} k_t - \sum_{t \in D} \dim Z(A_t) \\
&\geq \dim V \sum_{t \in D} \Bigl( k_t \dim V - 
 \dim \Ker (\widehat{A}_t - \widehat{\alpha_t\,\unit_V}) \Bigr) \\
&= \dim V \dim W.
\end{align*}
Hence 
\[
\dim W \leq (2 \dim \GL(V) -2)/ \dim V < 2 \dim V.
\]
Note that the pair $(V,A-\alpha\,\unit_V)$ is irreducible as so is $(V,A)$.
Therefore the above inequality together with Lemma~\ref{lem:Scott} implies 
$\lambda = -\Res_{z=\infty} \alpha(z) \neq 0$.
By Example~\ref{ex:ref}, we thus have 
\[
\dim V^{\lambda/\zeta} = \dim W - \dim V < \dim V.
\]
\end{proof}

Note that the functor 
$\add_\alpha \circ \mc_{\lambda/\zeta} \circ \add_\alpha$, 
where $\lambda=\Res_{z=\infty} \alpha(z)$, 
preserves the full subcategory $\DDk^0$ 
for any $\alpha(z) \in \EE_\vek(\C)$ by Example~\ref{ex:ref}.
Therefore the above theorem together with Proposition~\ref{prop:mc-sing} 
implies the following:
\begin{corollary}\label{cor:Katz}
Suppose that a pair $(V,A) \in \DDk^0$ with $\dim V \geq 2$ 
is irreducible, naively rigid, 
and has a normal form at any $t \in D$.
Then applying a suitable finite iteration of operations of the form
\begin{equation}\label{eq:mc-modified}
\add_\alpha \circ \mc_{\lambda/\zeta} \circ \add_\alpha, \quad
\alpha \in \EE_\vek(\C),\; \lambda =\Res_{z=\infty} \alpha(z),
\end{equation}
makes $(V,A)$ into an irreducible pair of rank 1.
\end{corollary}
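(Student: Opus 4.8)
The plan is to run an induction on $\dim V$, using Theorem~\ref{thm:Katz} as the single-step reduction and the properties of $\mc_\alpha$ and $\add_\alpha$ established earlier to control what happens along the way. The base case is $\dim V = 1$, where there is nothing to do: $(V,A)$ is already an irreducible pair of rank~$1$. So suppose $\dim V \geq 2$, with $(V,A) \in \DDk^0$ irreducible, naively rigid, and having a normal form at every $t \in D$. By Theorem~\ref{thm:Katz} there is a rank~$1$ system $\alpha(z) \in \EE_\vek(\C)$ with $\lambda = -\Res_{z=\infty}\alpha(z)$ such that the pair $(V',A') := \mc_{\lambda/\zeta}\circ\add_{-\alpha}(V,A)$ has $\dim V' < \dim V$. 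Rewriting $\mc_{\lambda/\zeta}\circ\add_{-\alpha} = \add_{-\beta}\circ(\add_\beta\circ\mc_{\lambda/\zeta}\circ\add_{-\alpha})$ and absorbing the outer $\add_{-\beta}$ is not quite the form \eqref{eq:mc-modified}; instead I will observe that replacing $\alpha$ by $-\alpha$ in \eqref{eq:mc-modified}, the operation $\add_{-\alpha}\circ\mc_{\lambda/\zeta}\circ\add_{-\alpha}$ (with now $\lambda = -\Res_{z=\infty}\alpha(z)$, matching the sign convention of Theorem~\ref{thm:Katz}) differs from $\mc_{\lambda/\zeta}\circ\add_{-\alpha}$ only by a final application of $\add_{-\alpha}$, which is a rank-preserving, irreducibility-preserving isomorphism of categories and keeps us inside $\DDk^0$ by Example~\ref{ex:ref}. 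Hence, up to this harmless outer addition, one step of \eqref{eq:mc-modified} strictly decreases $\dim V$.

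The key points to verify before invoking the inductive hypothesis are that the new pair $(V',A')$ again satisfies all three standing hypotheses. First, $(V',A') \in \DDk^0$: this is exactly the remark following Theorem~\ref{thm:Katz}, that operations of the form \eqref{eq:mc-modified} preserve the condition $\Res_{z=\infty} = 0$, which in turn rests on Example~\ref{ex:ref}. Second, irreducibility: by Corollary~\ref{cor:mc-property2}, since $(V,A)$ is irreducible and (as $\dim V \geq 2$) certainly not equivalent to $(\C,s)$, and since $\mc_{\lambda/\zeta}\circ\add_{-\alpha}(V,A) = (V',A') \neq (0,0)$ because $\dim V' \geq 1$ (indeed $\dim V' = \dim W - \dim V \geq 1$ from the proof of Theorem~\ref{thm:Katz} via Lemma~\ref{lem:Scott}), the pair $(V',A')$ is irreducible; the outer $\add_{-\alpha}$ preserves this. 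Third, the normal form at each $t \in D$: this is Proposition~\ref{prop:mc-sing}, which says that under precisely the hypotheses of Corollary~\ref{cor:mc-property2} plus a normal form at $t$, the middle convolution $\mc_\alpha$ again has a normal form at $t$; the additions $\add_{\pm\alpha}$ only shift the residue part and so preserve having a normal form. Finally, naive rigidity of $(V',A')$: since $\mc_\alpha$ (and $\add_\alpha$) induce symplectomorphisms between the relevant naive moduli spaces — Corollary~\ref{cor:mc-isom} together with Example~\ref{ex:ref} identifying $\Mreg(\bO(A),0)$ with the target moduli space after the operation — a one-point moduli space is carried to a one-point moduli space, so $(V',A')$ is naively rigid.

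With all hypotheses re-established for $(V',A')$ and $\dim V' < \dim V$, the inductive hypothesis yields a finite iteration of operations \eqref{eq:mc-modified} taking $(V',A')$ to an irreducible rank~$1$ pair; prepending the one step that produced $(V',A')$ from $(V,A)$ gives the desired finite iteration for $(V,A)$. The main obstacle, and the place requiring the most care, is the bookkeeping of hypotheses across a single step: one must confirm that each of \emph{irreducibility}, \emph{membership in $\DDk^0$}, \emph{existence of normal forms at all $t \in D$}, and \emph{naive rigidity} is inherited — and in particular that the nonvanishing condition $\mc_\alpha(V,A) \neq (0,0)$ needed to apply Corollary~\ref{cor:mc-property2} holds, which is guaranteed here by the strict rank drop but failure to check it would break the induction. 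The underlying analytic content — the dimension count forcing $\dim V' < \dim V$ — is already packaged in Theorem~\ref{thm:Katz}, so the corollary itself is a clean induction once this inheritance is in place.
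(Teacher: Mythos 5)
Your overall route is the one the paper intends: iterate the single reduction step of Theorem~\ref{thm:Katz} (applied through \eqref{eq:mc-modified} with parameter $-\alpha$, which is the correct sign bookkeeping), and check at each step that membership in $\DDk^0$ (the remark before the corollary, via Example~\ref{ex:ref}), irreducibility (Corollary~\ref{cor:mc-property2}), normal forms at each $t\in D$ (Proposition~\ref{prop:mc-sing}), and naive rigidity (Corollary~\ref{cor:mc-isom}) are inherited, so that the induction on the rank can proceed. Note one small conflation: you verify $(V',A')\in\DDk^0$ for $(V',A'):=\mc_{\lambda/\zeta}\circ\add_{-\alpha}(V,A)$, but that intermediate pair has $\Res_{z=\infty}A'=-\lambda\,\unit_{V'}\neq 0$; it is precisely the outer $\add_{-\alpha}$ (far from being merely harmless) that restores $\Res_{z=\infty}=0$, and the inductive hypothesis must be applied to the output of the full operation \eqref{eq:mc-modified}.

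The genuine flaw is in your justification of the nonvanishing $\mc_{\lambda/\zeta}\circ\add_{-\alpha}(V,A)\neq(0,0)$, which you rightly single out as essential. You invoke Lemma~\ref{lem:Scott}, but that lemma applies only to pairs in $\DDk^0$, whereas $\Res_{z=\infty}\bigl(A-\alpha\,\unit_V\bigr)=\lambda\,\unit_V\neq 0$; in the proof of Theorem~\ref{thm:Katz} the lemma is used only contrapositively, to force $\lambda\neq 0$, and it yields no lower bound on $\dim W$ in the actual case. Likewise "guaranteed by the strict rank drop" is backwards: Theorem~\ref{thm:Katz} gives the upper bound $\dim V'<\dim V$, which says nothing about $\dim V'\geq 1$. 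The correct argument, using the paper's own tools, is: $(V,A-\alpha\,\unit_V)$ is irreducible of rank $\geq 2$, so $(W,B):=\HD(V,A-\alpha\,\unit_V)$ is irreducible (Theorem~\ref{thm:duality-cat}) and, since $QP=-\Res_{z=\infty}(A-\alpha\,\unit_V)=-\lambda\,\unit_V$ is invertible, $P$ is injective (Example~\ref{ex:ref}), whence $\dim W\geq\dim V\geq 2$; therefore $\add_{\lambda/\zeta}(W,B)\not\sim(\C,t)$ for any $t\in D$, and Corollary~\ref{cor:exceptional} gives $\HD\circ\add_{\lambda/\zeta}(W,B)=\mc_{\lambda/\zeta}\circ\add_{-\alpha}(V,A)\neq(0,0)$, i.e.\ $\dim V'=\dim W-\dim V\geq 1$. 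With this repaired, your induction goes through and coincides with the paper's (implicit) proof.
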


\begin{example}\label{ex:rigid}
(a) Clearly all rank 1 objects in $\DDk^0$ are irreducible and naively rigid.

(b) Let $(\C^2,A) \in \DDk^0$ be an irreducible rank 2 object 
having a normal form 
\[
\Lambda_t (z) = 
\left(\, 
\begin{matrix} \alpha_t(z) & 0 \\ 0 & \beta_t(z) \end{matrix}
\,\right), \quad \alpha_t(z),\, \beta_t(z) \in \EE_{k_t}(\C),
\]
at each $t \in D$. 
The addition functor with $-\beta(z) := -\sum_{t \in D} \beta_t(z-t)$
sends $(\C^2,A)$ to $(\C^2,A-\beta\,\unit_{\C^2})$, which has the normal form 
$\Lambda_t-\beta_t\,\unit_{\C^2}=(\alpha_t -\beta_t) \oplus 0$ at each $t$.
Let $(\C^2,W_t,N_t,Q_t,P_t)$ be 
the canonical datum for $\Lambda_t -\beta_t\,\unit_{\C^2}$.
As in the proof of Lemma~\ref{lem:mc-sing}, we then see that
$W_t \simeq \C \otimes \C^{d_t}=\C^{d_t}$, 
where $d_t=\ord(\alpha_t -\beta_t)$, 
and $N_t$ is given by the nilpotent single Jordan block.
Thus we have $G_{N_t} \simeq G_{d_t}(\C) \subset \GL(\C^{d_t})$, which is abelian.
In particular all elements in $\g_{N_t}^*$ are fixed by the coadjoint action.
Therefore in Theorem~\ref{thm:geom}, 
the $G_\vek(V)$-coadjoint orbit $\bO(A)-\beta\,\unit_{\C^2}$
is described as 
$\mu_T^{-1}(\OO)^{T\st}/G_T$ with $\OO$ being just a single (central) element.
Thus we can compute the dimension of the naive moduli space as
\begin{align*}
\dim \Mreg(\bO(A),0)
&=\dim \Mreg \bigl( \bO(A)-\beta\,\unit_{\C^2},
-w^{-1}\Res_{z=\infty}\beta(z)\,\unit_{\C^2} \bigr) \\
&= 2 \sum_{t \in D} \dim \bM(\C^2,W_t) 
 - 2 \sum_{t \in D} \dim G_{N_t} -2 \dim \GL(\C^2) +2 \\
&= 4 \sum_{t \in D} d_t - 2\sum_{t \in D} d_t -6 \\
&= 2 \sum_{t \in D} d_t -6.
\end{align*}
Now suppose that $(\C^2,A)$ is naively rigid. 
Then the above formula implies that
the number of points $t \in D$ with $d_t >0$ is at most 3, 
so we may assume $D=\{ t_1, t_2, t_3 \}$, 
and up to permutation the triple $(d_{t_1},d_{t_2},d_{t_3})$ 
is one of the following:
\[
(3,0,0), \quad (2,1,0), \quad (1,1,1).
\]
In all the cases, the operator \eqref{eq:mc-modified} with $\alpha:=-\beta$
makes $(\C^2,A)$ into a rank 1 object.
\end{example}

\begin{remark}\label{rem:painleve}
In the situation of Example~\ref{ex:rigid}, (b), 
the case $\dim \Mreg(\bO(A),0)=2$ is also important.
In this case, the number of points $t \in D$ with $d_t>0$ is at most 4,
so we may assume $D=\{ t_1, t_2, t_3, t_4 \}$, 
and up to permutation the quadruple $(d_{t_1},d_{t_2},d_{t_3},d_{t_4})$ 
is one of the following:
\[
(4,0,0,0), \quad (3,1,0,0), \quad (2,2,0,0), \quad (2,1,1,0), \quad (1,1,1,1).
\]
Unlike in the case $\dim \Mreg(\bO(A),0)=0$,
the rank of $(\C^2,A)$ {\em does not change} under 
the operation \eqref{eq:mc-modified} with $\alpha:=-\beta$.
Indeed, Example~\ref{ex:ref} implies that the rank of the resulting system is 
equal to $\sum_t \dim W_t - \dim \C^2 = 4 - 2 =2$.
It is well-known~\cite{Okamoto} that through the isomonodromic deformation,
the above quadruples correspond to 
the Painlev\'e equations of type II, IV, III, V, VI respectively.
\end{remark}

\begin{acknowledgements}\small
The author is extremely grateful to 
Professor Yoshishige Haraoka, Toshio Oshima and Masa-Hiko Saito for 
giving him many opportunities to discuss problems around 
the middle convolution. 
Also, the author is much obliged to 
Philip Boalch for valuable comments, 
and to Hiroshi Kawakami and Professor Kouichi Takemura 
for expounding their works to him. 
Kawakami's talk triggered the author's interest 
in the problems treated in this article. 
Takemura suggested in his talk 
a direct generalization of Dettweiler-Reiter's description 
of the middle convolution for irregular singular systems 
(in one parameter case, however systems are allowed to have 
a pole of arbitrary order at $\infty$),
and conjectured that it holds basic properties of the original middle convolution.
His idea is used on \eqref{eq:matrix}.

Finally, the author would like to thank the referee, 
who ponited out errors and a lack of argument 
in the earlier version of this article.
\end{acknowledgements}

\end{document}